\newtheorem{assumption}[theorem]{Assumption}
\newtheorem{remark}[theorem]{Remark}
\newcommand{\TimesFont}{\RequirePackage{times}\RequirePackage[scaled=0.92]{helvet}}
\definecolor{jz}{rgb}{0.1,0.45,0.1}
\definecolor{aw}{rgb}{0,0,1}
\definecolor{sw}{rgb}{1,0.53,0.0}
\newcommand{\cB}{\mathcal{B}}
\newcommand{\cD}{\mathcal{D}}
\newcommand{\cN}{\mathcal{N}}
\newcommand{\cO}{\mathcal{O}}
\newcommand{\cX}{\mathcal{X}}
\newcommand{\N}{\mathbb{N}}
\newcommand{\R}{\mathbb{R}}
\DeclareMathOperator{\Tr}{Tr}
\DeclareMathOperator{\spn}{span}
\definecolor{darkred}{RGB}{139,0,0}
\definecolor{darkgreen}{RGB}{0,100,0}
\definecolor{darkmagenta}{RGB}{139,0,139}
\definecolor{darkpurple}{RGB}{110,0,180}
\definecolor{darkblue}{RGB}{40,0,200}
\definecolor{darkorange}{RGB}{255,140,0}
\begin{document}\selectlanguage{english}


\title{Gradient flow structure and convergence analysis of the ensemble Kalman inversion for nonlinear forward models}

\author{Simon Weissmann\thanks{Universit\"at Heidelberg, Interdisziplin\"ares Zentrum f\"ur Wissenschaftliches Rechnen, D-69120 Heidelberg, Germany ({\tt simon.weissmann@uni-heidelberg.de})}
}

\date{}

\maketitle

\begin{abstract} 
The ensemble Kalman inversion (EKI) is a particle based method which has been introduced as the application of the ensemble Kalman filter to inverse problems.  In practice it has been widely used as derivative-free optimization method in order to estimate unknown parameters from noisy measurement data.  For linear forward models the EKI can be viewed as gradient flow preconditioned by a certain sample covariance matrix.  Through the preconditioning the resulting scheme remains in a finite dimensional subspace of the original high-dimensional (or even infinite dimensional) parameter space and can be viewed as optimizer restricted to this subspace.  For general nonlinear forward models the resulting EKI flow can only be viewed as gradient flow in approximation.  In this paper we discuss the effect of applying a sample covariance as preconditioning matrix and quantify the gradient flow structure of the EKI by 
controlling the approximation error through the spread in the particle system.  The ensemble collapse on the one side leads to an accurate gradient approximation, but on the other side to degeneration in the preconditioning sample covariance matrix. In order to ensure convergence as optimization method we derive lower as well as upper bounds on the ensemble collapse. Furthermore, we introduce covariance inflation without breaking the subspace property intending to reduce the 
collapse rate of the ensemble such that the convergence rate improves. In a numerical experiment we apply EKI to a nonlinear elliptic boundary-value problem and illustrate the dependence of EKI as derivative-free optimizer on the choice of the initial ensemble.


\end{abstract}

\noindent {\footnotesize
	{\bf Keywords.} Ensemble Kalman inversion, Tikhonov regularization, derivative-free optimization, subspace property, covariance inflation
\\
	\noindent {\bf AMS(MOS) subject classifications.} 37C10, 65N21, 65N75, 93D05
}

\tableofcontents


\section{Introduction} In inverse problems we are concerned with the task of recovering some unknown quantity of interest, which can not be observed directly.  Its research area has a wide range of applications in science and engineering,  including among others medical imaging and geophysics.  Since inverse problems are typically ill-posed,  much research focuses on theoretical and practical analysis of regularization methods.  Regularization techniques help to overcome instability issues arising for example through noise in the data.  Beside deterministic regularization methods, the Bayesian approach formulates inverse problems in a statistical framework.  By formulating a probabilistic approach, it incorporates uncertainty of the underlying model. The regularization can then be viewed as incorporating prior information through a probability distribution on the unknown parameter.  The solution of the Bayesian inverse problem is given by the posterior distribution - the distribution of the unknown parameter conditioned on the realization of the observation. The resulting posterior distribution is often not accessible directly, such that sampling or suitable integration methods are needed.

In this document, we focus on a particle based method which is commonly used for data assimilation problems - the ensemble Kalman filter (EnKF).  The EnKF has been introduced by Evensen \cite{GE03,GE09} and more recently, formulated for solving inverse problems \cite{ILS13}. The application of the EnKF to inverse problem has been established as widely used tool and is known as ensemble Kalman inversion (EKI).  
The aim of this manuscript is to analyse EKI as derivative-free optimization method for general nonlinear forward models. 
Our analysis will be based on the continuous-time formulation of the scheme, which can be viewed as (stochastic) ordinary differential equation ((S)ODE) in time, with focus on the deterministic setting, i.e.~without perturbed observations.  We will quantify the approximation of derivatives through the ensemble spread and derive sufficient lower and upper bounds on the collapse in order to verify convergence of the scheme.

\subsection{Literature review}
The EnKF has been introduced by Evensen \cite{GE03} as particle based approximation of the filtering distribution arising in data assimilation problems.  The method has been applied in the context of Bayesian inverse problems \cite{chen2012ensemble,emerick2013ensemble}. An important focus lies in the analysis of the large ensemble size limit, which has been studied  for linear and Gaussian models \cite{L2009LargeSA, doi:10.1137/140965363} as well as in a nonlinear setting \cite{doi:10.1137/140984415} and for the ensemble square root filter \cite{LS2021_c}.  Viewing the particle system as Monte Carlo method had led to much focus on the formulation of multilevel variants \cite{doi:10.1137/15M100955X,2016arXiv160808558C,HST2020,chada2021multilevel}.  Moreover,  ensemble Kalman methods have been studied in the long time behavior \cite{0951-7715-27-10-2579,47c5c78ebb8c44ef9d8d5e0d23e23c13,0951-7715-29-2-657}.
The accuracy for a fixed ensemble size has been studied in \cite{Tong2018,doi:10.1002/cpa.21722} for the EnKF and in \cite{delmoral2018,doi:10.1137/17M1119056} for the ensemble Kalman-Bucy filter. Furthermore, a high focus of research for ensemble Kalman methods lies in the continuous-time formulation \cite{bergemann2010localization,bergemann2010mollified,Reich2011, LS2021,LS2021_b,lange2021derivation}.

In \cite{ILS13} the authors propose the application of the EnKF to inverse problems.  In the literature EKI has been analysed as sequential Monte Carlo type methods as well as derivative-free optimization method. In the setting of linear forward maps and Gaussian prior assumptions it is well-known that EKI approximates the posterior distribution in its mean field limit. However, in \cite{ErnstEtAl2015} it has been demonstrated that in general nonlinear settings the resulting estimator is not consistent with respect to the posterior distribution. In the mean field limit EKI has been analysed based on the Fokker--Planck equation \cite{DL2021_b,MHGV2018}. 
The continuous-time limit of EKI has been formally derived in \cite{SS17} and theoretically analysed in \cite{BSW18,BSWW2021}. In the continuous-time formulation EKI can be viewed as derivative-free optimization method \cite{SS17} and has shown promising results for the training task in different machine learning applications \cite{KS18,GSW2020}.  Building up on the continuous-time formulation there has been much analysis on deterministic EKI, which ignores the diffusion of the underlying SDE, and stochastic EKI including the perturbed observations.  For linear models well-posedness and convergence results have been derived for the deterministic formulation \cite{SS17} and the stochastic formulation \cite{BSWW19}.  The nonlinear setting has been analysed in  \cite{Chada2019ConvergenceAO} where the authors consider the discrete time setting with a non constant step size scheme. The authors include variance inflation breaking the subspace property in order to verify convergence of the method.  Moreover, EKI has been studied for nonlinear models in the mean field limit \cite{DLL2020}, where weights have been incorporated in order to correct the resulting posterior estimate. In our presented analysis it turns out that the gradient flow structure highly depends on the behavior of the sample covariance which is used as preconditioner.  The dynamical behavior of EKI and its sample covariance has been described and analysed  by a spectral decomposition \cite{bungert2021}.  Furthermore, in \cite{tong2022localization} a localization of the sample covariance has been introduced and analysed based on the deterministic continuous-time formulation for nonlinear forward models.  Since EKI is applied as solver for inverse problems, one has to handle noise in the data. In this context, an early stopping criterion has been proposed in \cite{SS17b} and discrete regularization has been analysed in \cite{Iglesias2015,2016InvPr..32b5002I}. The focus in this document is on the Tikhonov regularized modification of EKI (TEKI) which has been introduced in \cite{CST19} and further analysed in \cite{weissmann2021adaptive}.  Further, adaptive regularization methods within EKI have been studied in \cite{parzer2021convergence,iglesias2020adaptive}.

From an alternative point of view, EKI has been modified to a particle based sampling method \cite{AGFHWLAS2019,GNR2020,DL2021,RW2021}.  Shifting the noise in the observations to the parameter space itself the resulting SDE can be treated as Langevin dynamic, where the ergodicity can be used to build a sampling method.

\subsection{Main contributions}

In this article we are going to quantify EKI as derivative-free optimization method in the context of general nonlinear forward maps. We employ a gradient flow structure and present a convergence analysis based on Lyapunov functions.  With this document, we extend the convergence analysis for TEKI presented in \cite{CST19} where the convergence result was based on linear forward models.  We make the following contributions:
\begin{itemize}
\item We present the well-posedness of the TEKI flow by proving unique existence of solutions for \eqref{eq:cont_deterministicTEKI}. Furthermore, we view the TEKI flow as approximate gradient flow by decomposing the flow into the preconditioned gradient direction and the approximation error resulting through Taylor expansion. 
\item We quantify the approximation error through the spread in the particle system. While the ensemble collapses in time with rate $1/t$, we will see that the approximation error degenerates with rate $(1/t)^{3/2}$. In contrast, we prove that the sample covariance remains strictly-positive definite as operator acting on the subspace spanned by the initial ensemble spread {$\mathcal B = u_0^\perp + \spn\{u_0^{(j)}-\bar u_0\mid j=1,\dots,J\}$ for a specific $u_0^\perp\in\mathcal X$}.
\item We describe the effect of preconditioning through the sample covariance. Therefore, we view the minimization task of $\Phi_R$ over the subspace {$\mathcal B$} as equality constrained optimization method and derive a Polyak-Łojasiewicz (PL)-type inequality restricted to the subspace {$\mathcal B$}. Under strong convexity and smoothness of $\Phi_R$ we are then able to prove convergence with rate $(1/t)^{\frac1\alpha}$, $\alpha\ge1$.
\item We incorporate covariance inflation without breaking the subspace property. The inflation sufficiently slows down the ensemble collapse such that the convergence rate $\alpha$ can be improved to $(1-\rho)\alpha$, where $\rho\in[0,1)$ is the inflation factor.  Furthermore, we view our proposed covariance inflation scheme as generalization of the ensemble square root filter (ESRF) applied to inverse problems.
\end{itemize}

\paragraph{Outline} The remainder of this manuscript is structured as follows. In Section~\ref{sec:mathsetup} we provide the mathematical background for (T)EKI, and we formulate our main convergence result with the corresponding assumptions in Section~\ref{sec:main}.  We provide a row of auxiliary properties of the TEKI flow in Section~\ref{sec:prelim} which are then applied to present the proof of our main Theorem~\ref{thm:main} in Section~\ref{sec:proof_main}.  The incorporation of the covariance inflation and the improved convergence analysis are presented in Section~\ref{sec:ESRF}.  In Section~\ref{sec:numerics} we illustrate the presented convergence results in a one-dimensional elliptic boundary-value problem. The document will be closed with a summary and outlook for open research directions in Section~\ref{sec:conclusion}.

\section{Mathematical setup}\label{sec:mathsetup} 
In the following we will introduce the mathematical background. We consider the inverse problem of recovering $u\in\mathcal X$ given noisy observations
\begin{equation}\label{eq:IP}
y = G(u) + \eta,
\end{equation}
where {$\mathcal X$ is some Hilbert space}, $G:\mathcal X\to\mathbb R^K$ is the possibly nonlinear forward map and $\eta$ denotes additive Gaussian noise, i.e.~$\eta\sim\mathcal N(0,\Gamma)$ for some symmetric positive definite noise covariance matrix $\Gamma\in\mathbb R^{K\times K}$.  Due to ill-posedness of inverse problems, we typically consider the task of minimization of a regularized loss functional of the form

\begin{equation*}
\min_{x\in\mathcal X}\ \mathcal L_{\mathbb R^K}(G(x),y) + \mathcal R_{\mathcal X}(x),
\end{equation*}
where we describe the discrepancy to the data through $\mathcal L_{\mathbb R^K}:\mathbb R^K\times\mathbb R^K \to \mathbb R_+$ and incorporate prior information through the regularization function $\mathcal R_\mathcal X:\mathcal X\to\mathbb R_+$.  We refer to 
\cite{EHN1996,benning_burger_2018} for an overview of various types of regularization methods.  Classical choices of regularization include Tikhonov regularization \cite{EKN1989} and total variation regularization \cite{Chambolle2009AnIT,ROF1992}.  Throughout this document, our focus will be on the finite dimensional case $\mathcal X=\mathbb R^{n_x}$ and the Tikhonov regularized lossfunction of the form
\begin{equation}\label{eq:regularized_lossfunction}
\Phi_R(x) = \Phi(x;y) + R(x), \quad R(x) = \frac12\|x\|_{C_0}^2,
\end{equation}
where we define the least-square data misfit by
\begin{equation}\label{eq:datamisfit}
\Phi(x;y) = \frac12 \|G(x)-y\|_\Gamma^2
\end{equation}
and $R$ denotes the regularization function with symmetric positive definite regularization matrix $C_0\in\R^{n_x\times n_x}$. Here,  we have introduced the notation $\|\cdot\|_{\Gamma}:=\|\Gamma^{-1/2}\cdot\|$, $\|\cdot\|_{C_0}=\|C_0^{-1/2}\cdot\|$, where $\|\cdot\|$ denotes the euclidean norm in $\R^K$ and $\R^{n_x}$ respectively.  Moreover, we will make use of $\|\cdot\|_{\mathrm{HS}}$ as notation for the Hilbert-Schmidt operator norm. 

The Tikhonov regularization is closely connected to the Bayesian approach for inverse problems \cite{AMS10} which incorporates regularization from a statistical point of view.  In a probabilistic model we let $u$ be an $\mathcal X$-valued random variable with prior distribution $\mu_0$ stochastically independent of the noise $\eta$.  When viewing $(u,y)$ as a jointly distributed random variable on $\R^K\times\mathcal X$,  the solution of the Bayesian inverse problem is given by the posterior distribution of $u\mid y$ 
\begin{equation}\label{eq:posterior}
\mu(\mathrm{d}x) = \frac1Z\exp(-\Phi(x;y))\mu_0({\mathrm{d}}x),
\end{equation}
with normalization constant 
\[ 
Z:= \int_{\R^{n_x}}\exp(-\Phi(x;y))\mu_0({\mathrm{d}}x).
\]
We note for a Gaussian prior assumption $\mu_0 = \cN(0, C_0)$ the maximum a-posteriori estimate computes as
\[
\min_{x\in\mathcal X}\ \Phi(x;y) + \frac12\|x\|_{C_0}^2,
\]
which relates the Bayesian approach for inverse problems to the Tikhonov regularization through \eqref{eq:regularized_lossfunction}. Throughout this manuscript we assume that $y\in\R^{n_y}$ is fixed and suppress the dependence of $\Phi$ on $y$ by writing $\Phi(x) = \Phi(x;y)$.

\subsection{The ensemble Kalman filter applied to inverse problems}

The EKI has been introduced as the application of the EnKF to inverse problems \cite{ILS13}. We will follow the derivation of EKI as sequential Monte Carlo method aiming to approximate the posterior distribution. Therefore, we introduce
the tempered distribution
\begin{equation}\label{eq:tempering}
\mu_{n+1}({\mathrm{d}}u) = \frac1{Z_n} \exp(-h\Phi(u;y))\mu_n({\mathrm{d}}u), {\quad n=1,\dots,N,\quad N\in\N,}
\end{equation}
with $h=1/N$ and normalizing constants $Z_n$, where $\mu_0$ is the prior distribution and $\mu_N$ corresponds to the posterior distribution.  {This step incorporates an artificial discrete time system shifting weight from prior to posterior distribution.} Given a sample from the prior distribution, the EKI evolves the sample as particle system through Gaussian approximation steps. Let $(u_0^{(j)})_{j\in\{1,\dots,J\}}$ be the initial ensemble of size $J\ge 2$ and consider the empirical approximation of the tempering distribution defined in \eqref{eq:tempering}
\[
\mu_n(\mathrm{d}u)\approx \frac1J\sum_{j=1}^J\delta_{u_n^{(j)}}(\mathrm{d}u). 
\]
Defining the following empirical means and covariances
\begin{align*}
C(u)&=\frac1J\sum_{j=1}^J(u^{(j)}-\bar u)\otimes (u^{(j)}-\bar u),\quad \bar u = \frac1J\sum_{j=1}^J u^{(j)},\\
 C^{G,G}(u)&=\frac1J\sum_{j=1}^J(G(u^{(j)})-\bar G)\otimes (G(u^{(j)})-\bar G),\quad \bar G = \frac1J\sum_{j=1}^J G(u^{(j)}),\\
C^{u,G}(u) &= \frac1J\sum_{j=1}^J(u^{(j)}-\bar u)\otimes (G(u^{(j)})-\bar G), \\ C^{G,u}(u) &= \frac1J\sum_{j=1}^J(G(u^{(j)})-\bar G)\otimes (u^{(j)}-\bar u),
\end{align*}
the ensemble Kalman iteration for the current particle system $(u_n^{(j)})_{j\in\{1,\dots,J\}}$ is given by
\begin{equation}\label{eq:discreteEKI}
 u_{n+1}^{(j)} = u_n^{(j)} + C^{u,G}(u_n)(C^{G,G}(u_n) + h^{-1}\Gamma)^{-1} (y_{n+1}^{(j)} - G(u_n^{(j)})),\quad j=,1\dots,J,
\end{equation}
where $h>0$ is a artificial step size and $y_{n+1}^{(j)}$ are perturbed observations
\[y_{n+1}^{(j)} = y + \xi_{n+1}^{(j)},\quad \xi_{n+1}^{(j)} \sim \mathcal N(0,h^{-1}\Gamma)\]
with $\xi_{n+1}^{(j)}$ being i.i.d.~with respect to both $j$ and $n$.  Considering the limit $h\to0$ the discrete EKI \eqref{eq:discreteEKI} can be viewed as time discretization of the system of coupled SDEs
\begin{equation}\label{eq:cont_stochasticEKI}
{\mathrm d}u_t^{(j)} = C^{u,G}(u_t)\Gamma^{-1}(y-G(u_t^{(j)}))\, {\mathrm d}t + C^{u,G}(u_t)\Gamma^{-1/2}\,{\mathrm d}W_t^{(j)},
\end{equation}
where $W^{(j)}$ are independent Brownian motions on $\mathbb R^K$. The continuous-time limit has been formulated in \cite{SS17} and theoretical verified in \cite{BSW18, BSWW2021}. 

Since the EKI is not consistent with the posterior distribution for general nonlinear forward maps \cite{ErnstEtAl2015}, we will focus on EKI as derivative-free optimization method.  Instead of considering perturbed observations, sometimes also referred to as stochastic EKI, the EKI is often analysed in its deterministic formulation represented by a system of coupled ODEs
\begin{equation}\label{eq:cont_deterministicEKI}
\frac{{\mathrm d}u_t^{(j)}}{{\mathrm d}t} = C^{u,G}(u_t)\Gamma^{-1}(y-G(u_t^{(j)})),
\end{equation}
which describes the drift term of the SDE formulation \cite{SS17,SS17b}. Considering a linear forward map $G(\cdot) = A\cdot$ with $A\in\mathcal L(\mathcal X,\mathbb R^K)$ the deterministic EKI can be viewed as preconditioned gradient flow written through
\begin{equation}\label{eq:cont_deterministicEKI2}
\frac{{\mathrm d}u_t^{(j)}}{{\mathrm d}t} = -C(u_t)\nabla\Phi(u_t^{(j)}).
\end{equation}
For general nonlinear forward maps the representation as preconditioned gradient flow holds only approximatively. In particular,  using Taylor approximations we can justify that
\[C^{u,G}(u) \approx C(u){\mathrm D}G^\ast(u),\]
where ${\mathrm D}G$ denotes the (Fréchet) derivative of $G$, and we are going to quantify the error between the nonlinear EKI and a preconditioned gradient flow. Since our aim is to view the EKI as optimization method for minimizing the data misfit functional $\Phi$, the resulting scheme is ill-posed due to the ill-posed inverse and it is natural to consider a regularized modification of the EKI. Therefore, the authors in \cite{CST19} proposed to incorporate Tikhonov regularization within EKI resulting in the deterministic continuous-time formulation as coupled system of ODEs
\begin{equation}\label{eq:cont_deterministicTEKI}
\frac{{\mathrm d}u_t^{(j)}}{{\mathrm d}t} = C^{u,G}(u_t)\Gamma^{-1}(y-G(u_t^{(j)}))-C(u_t)C_0^{-1}u_t^{(j)},
\end{equation}
which can then be viewed as derivative-free approximation to the preconditioned gradient flow
\begin{equation}\label{eq:cont_deterministicTEKI2}
\frac{{\mathrm d}u_t^{(j)}}{{\mathrm d}t} = -C(u_t)\nabla\Phi_R(u_t^{(j)}).
\end{equation}
We will see that this approximation is getting more accurate for a decreasing spread of the particle system. To do so, we are going to analyse the long-time behavior of \eqref{eq:cont_deterministicTEKI} for a fixed number of particles. Therefore, we will extend the convergence analysis presented in \cite{CST19} from the linear to the nonlinear setting. Here, it has been shown that assuming a linear forward model $G(\cdot) = A\cdot, $ $A\in\mathcal L(\mathcal X,\mathbb R^K)$ the resulting solution of the TEKI flow \eqref{eq:cont_deterministicTEKI} minimizes the objective function $\Phi_R$ in the subspace spanned by initial ensemble {spread $\mathcal B=u_0^\perp + \spn\{u_0^{(j)}-\bar u_0\mid j=1,\dots,J\}$ for a specific $u_0^\perp\in\mathcal X$}.

\section{Main result: Convergence of TEKI for nonlinear forward models}\label{sec:main}
In order to quantify the nonlinear EKI as derivative-free approximation we make the following assumptions.

\begin{assumption}\label{ass:smoothness}
We assume that the objective function $\Phi_R$ is $C^2(\cX;\R_+)$ and $\mu$-strongly convex, i.e.~
\[\Phi_R(x_1)-\Phi_R(x_2)\ge \langle \nabla\Phi_R(x_2),x_1-x_2\rangle + \frac{\mu}2\|x_1-x_2\|^2\]
for some $\mu>0$ and all $x_1,x_2\in\mathcal X$.  Moreover, we assume that $\Phi_R$ is $L$-smooth, i.e.
\[\Phi_R(x_1)-\Phi_R(x_2)\le \langle \nabla\Phi_R(x_2),x_1-x_2\rangle + \frac{L}2\|x_1-x_2\|^2\]
for some $L>0$ and all $x_1,x_2\in\mathcal X$ and therefore satisfies the PL-inequality
\[  \nu\|\nabla\Phi(x)\|^2\ge \Phi_R(x) - \Phi(x_\ast), \quad x\in\mathcal X\]
for some $\nu>0$, where $x_\ast$ is a stationary point, i.e.~$\nabla\Phi(x_\ast)=0$.
\end{assumption}

{In the above assumption, we suppose global $\mu$-strong convexity and $L$-smoothness, which is needed to prove convergence results as global optimizer.  In the case, where those assumptions are only satisfied locally, we can only expect the results to hold in local neighborhoods around critical points.  The necessary convergence analysis as local optimizer is work in progress.}

\begin{assumption}\label{ass:Taylor}
We assume that the forward map $G$ is $C^2(\cX;\R^{n_y})$, locally Lipschitz with constant $c_{\mathrm{lip}}>0$ and can be approximated linearly by
\begin{equation}\label{eq:linearization}
G(x_1) = G(x_2) + {\mathrm D}G(x_2)(x_1-x_2) + {\mathrm{Res}}(x_1,x_2),
\end{equation}
where the approximation error ${\mathrm{Res}}$ is bounded by
\begin{equation}\label{eq:Taylor_residual}
\|{\mathrm{Res}}(x_1,x_2)\|_{\R^{n_y}} \le b_{\mathrm{res}} \|x_1-x_2\|_{\cX}^2
\end{equation} 
for some $b_{\mathrm{res}}>0$ independent from $x_1,x_2\in\mathcal X$.
\end{assumption}

We note that Assumption~\ref{ass:Taylor} is satisfied for linear forward maps $G(\cdot) = A\cdot$, $A\in\mathcal L(\mathcal X,\mathbb R^K)$ with ${\mathrm{Res}}(x_1,x_2) = 0$ for all $x_1,x_2\in\mathcal X$. Furthermore, it is sufficient to assume that the Hessian of $G$ is uniformly bounded in the sense that $\langle z, {\mathrm D}^2 G(x)z\rangle \le c \|z\|^2$ for all $z\in\cX$. Alternatively, the tangential cone condition
\begin{equation*}
\|G(x_1)-G(x_2)-{\mathrm D}G(x_2)(x_1-x_2)\|\le \tilde b\|x_1-x_2\| \|G(x_1)-G(x_2)\| 
\end{equation*}
for some uniform $\tilde b>0$ together with Lipschitz continuity implies Assumption~\ref{ass:Taylor}. This condition is often assumed (locally) when studying iterative methods for nonlinear inverse problems \cite{S1995}.  {As soon as we are able to proof uniform boundedness of the solution of the TEKI flow \eqref{eq:cont_deterministicTEKI}, it is sufficient to consider Assumption~\ref{ass:Taylor} locally, i.e.~to assume that for $B>0$ there exists $b_{\mathrm{res}}(B)>0$ such that \eqref{eq:Taylor_residual} holds for all $x_1,x_2\in\mathcal X$ with $\|x_1\|_{\mathcal X},\|x_2\|_{\mathcal X}^2\le B$. While the proof of boundedness of the TEKI flow \eqref{eq:cont_deterministicTEKI} is typically challenging, one can enforce boundedness through a smooth modification of the forward map $G$, such that $G(x)=0$ for $\|x\|>B$, see for example \cite{CST19, Chada2019ConvergenceAO}. Considering this type of modification might be reasonable for practical implementation if it is known that solutions of the corresponding inverse problem should be bounded.}

Furthermore, we note that the above Assumption~\ref{ass:Taylor} can be weaken through the assumption $\|{\mathrm{Res}}(x_1,x_2)\| \le b_{\mathrm{res}} \|x_1-x_2\|^p$,  for $p> 2$.  However, to avoid technical details we will keep the assumption with $p=2$ for the main proof and state more details on the extension to $p>2$ in Remark~\ref{rem:higher_apprx} after the proof of our main result.

\begin{proposition}\label{prop:gradientflow_structure}
Suppose Assumption~\ref{ass:Taylor} is satisfied, then it holds true that
\begin{align*}
\frac{{\mathrm d}\frac1J\sum_{j=1}^J\left(\Phi_R(u_t^{(j)})\right)}{{\mathrm d}t} &\le -\frac1J\sum_{j=1}^J \langle \nabla \Phi_R(u_t^{(j)}), C(u_t)\nabla \Phi_R(u_t^{(j)})\rangle\\ &\quad+   b_1 J V_e(t)^{3/2}\frac{1}{J}\sum_{j=1}^J\left(\frac{1}{2}\|\nabla\Phi_R(u_t^{(j)})\|^2+ \Phi_R(u_t^{(j)}) \right),
\end{align*}
where $b_1>0$ is independent from $J$ and $V_e(t) :=\frac{1}{J}\sum_{k=1}^J \|u_t^{(k)}-\bar u_t\|^2$.
\end{proposition}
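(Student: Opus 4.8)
The plan is to differentiate the ensemble average of $\Phi_R$ along the flow \eqref{eq:cont_deterministicTEKI}, to rewrite the TEKI drift as the \emph{exact} preconditioned gradient direction $-C(u_t)\nabla\Phi_R(u_t^{(j)})$ plus a Taylor residual, and then to control that residual purely through the ensemble spread $V_e(t)$. By the chain rule,
\[\frac{{\mathrm d}}{{\mathrm d}t}\frac1J\sum_{j=1}^J\Phi_R(u_t^{(j)}) = \frac1J\sum_{j=1}^J\left\langle\nabla\Phi_R(u_t^{(j)}),\frac{{\mathrm d}u_t^{(j)}}{{\mathrm d}t}\right\rangle,\]
so everything reduces to pairing the drift against $\nabla\Phi_R(u_t^{(j)})$. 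Since $\nabla\Phi_R(x) = {\mathrm D}G(x)^\ast\Gamma^{-1}(G(x)-y) + C_0^{-1}x$ and the regularization part is quadratic, the term $-C(u_t)C_0^{-1}u_t^{(j)}$ already matches $-C(u_t)\nabla R(u_t^{(j)})$ exactly; only the data-misfit part requires linearization.

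The key algebraic step is to expand $C^{u,G}(u_t)$ around the $j$-th particle. Writing $e^{(k)} = u_t^{(k)}-\bar u_t$ and $\bar G_t := \tfrac1J\sum_{m=1}^J G(u_t^{(m)})$, and inserting the linearization \eqref{eq:linearization} of $G$ around $u_t^{(j)}$, I would obtain $G(u_t^{(k)})-\bar G_t = {\mathrm D}G(u_t^{(j)})e^{(k)} + R^{(k)}$ with $R^{(k)} = {\mathrm{Res}}(u_t^{(k)},u_t^{(j)}) - \tfrac1J\sum_m{\mathrm{Res}}(u_t^{(m)},u_t^{(j)})$. Substituting this into $C^{u,G}(u_t)\Gamma^{-1}(y-G(u_t^{(j)})) = \tfrac1J\sum_k e^{(k)}\langle G(u_t^{(k)})-\bar G_t,\Gamma^{-1}(y-G(u_t^{(j)}))\rangle$ and using $C(u_t)=\tfrac1J\sum_k e^{(k)}\otimes e^{(k)}$, the leading term collapses to $-C(u_t){\mathrm D}G(u_t^{(j)})^\ast\Gamma^{-1}(G(u_t^{(j)})-y)$, so the full drift equals $-C(u_t)\nabla\Phi_R(u_t^{(j)}) + r_t^{(j)}$ with
\[r_t^{(j)} = \frac1J\sum_{k=1}^J e^{(k)}\,\langle R^{(k)},\Gamma^{-1}(y-G(u_t^{(j)}))\rangle .\]
The main part then reproduces exactly the negative quadratic form $-\tfrac1J\sum_j\langle\nabla\Phi_R(u_t^{(j)}),C(u_t)\nabla\Phi_R(u_t^{(j)})\rangle$.

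The main obstacle is the quantitative bound on $r_t^{(j)}$, where both the correct power $V_e(t)^{3/2}$ and the $J$-dependence must be tracked carefully. By Cauchy--Schwarz, $\langle R^{(k)},\Gamma^{-1}(y-G(u_t^{(j)}))\rangle \le \|R^{(k)}\|_\Gamma\,\|y-G(u_t^{(j)})\|_\Gamma$, and the residual bound \eqref{eq:Taylor_residual} with $\|u_t^{(k)}-u_t^{(j)}\|^2\le 2\|e^{(k)}\|^2 + 2\|e^{(j)}\|^2$ gives $\|R^{(k)}\|_\Gamma \le c\big(\|e^{(k)}\|^2 + \|e^{(j)}\|^2 + V_e(t)\big)$ for a constant $c$ absorbing $b_{\mathrm{res}}$ and $\|\Gamma^{-1/2}\|$. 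Hence $\|r_t^{(j)}\| \le \|y-G(u_t^{(j)})\|_\Gamma\,\tfrac1J\sum_k\|e^{(k)}\|\|R^{(k)}\|_\Gamma$, and I would estimate the three resulting sums via $\tfrac1J\sum_k\|e^{(k)}\| \le V_e(t)^{1/2}$ together with the crude bounds $\|e^{(j)}\|^2\le J V_e(t)$ and $\tfrac1J\sum_k\|e^{(k)}\|^3 \le \sqrt J\,V_e(t)^{3/2}$. The dominant contribution is the $\|e^{(j)}\|^2$ term, which alone produces the factor $J$, yielding $\|r_t^{(j)}\| \le b_1 J V_e(t)^{3/2}\,\|y-G(u_t^{(j)})\|_\Gamma$ with $b_1$ independent of $J$.

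To conclude, I would bound $\langle\nabla\Phi_R(u_t^{(j)}),r_t^{(j)}\rangle \le \|\nabla\Phi_R(u_t^{(j)})\|\,\|r_t^{(j)}\|$, use $\|y-G(u_t^{(j)})\|_\Gamma = \sqrt{2\Phi(u_t^{(j)})}\le\sqrt{2\Phi_R(u_t^{(j)})}$ (as $R\ge0$), and apply Young's inequality in the form $\|\nabla\Phi_R\|\sqrt{2\Phi_R}\le\tfrac12\|\nabla\Phi_R\|^2+\Phi_R$. Averaging over $j$ and adding the leading quadratic term gives the asserted inequality. I expect the delicate points to be the bookkeeping of the $J$-factors in the spread estimates (to keep $b_1$ genuinely $J$-independent) and confirming that $3/2$, rather than the naively expected quadratic order, is the true power of the residual once it is paired with $\|y-G(u_t^{(j)})\|_\Gamma \sim \sqrt{\Phi_R}$.
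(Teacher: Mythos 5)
Your proposal is correct and follows essentially the same route as the paper: the chain rule, the Taylor expansion of $G$ around the $j$-th particle identifying the drift as $-C(u_t)\nabla\Phi_R(u_t^{(j)})$ plus a residual built from ${\mathrm{Res}}(u_t^{(k)},u_t^{(j)})-\overline{\mathrm{Res}}^{(j)}$, the bound $\|u^{(k)}-u^{(j)}\|^2\le 2\|e^{(k)}\|^2+2\|e^{(j)}\|^2$ producing the factor $JV_e(t)^{3/2}\sqrt{\Phi(u_t^{(j)})}$, and Young's inequality to close. The paper simply packages the residual estimate as a separate lemma (Lemma~\ref{lem:grad_approx}), but the decomposition, the $J$-bookkeeping, and the $3/2$ power are identical to yours.
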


The above result quantifies the (approximate) gradient flow structure of TEKI. We will see, that $V_e(t)$ is of the order $1/t$, which means that the approximation error of the gradient through the covariance degenerates in time. However, the degeneration of $C(u_t)$ influences the preconditioning effect for the gradient as well. Although we can show, that the degeneration of $C(u_t)$ is bounded from below by the order $1/t$, it is not possible to imply lower bounds on the eigenvalues $C(u_t)$. This issue comes from the fact that $C(u_t)$ is a sample covariance and has at most rank $\min(n_x,J-1)$. To alleviate this problem, we will show that $C(u_t)$ will remain positive definite in the subspace spanned through the initial ensemble {spread}.  {Along the solution of the TEKI flow the particle system $(u_t^{(j)})_{j=1,\dots,J}$ remains in the subspace 
\[\cB := u_0^\perp + \spn\{u_0^{(j)} - \bar u_0 \mid j=1,\dots,J\} \subset \spn\{u_0^{j}\mid j=1,\dots,J\}=:\mathcal S,  \]
where $u_0^\perp = \bar u_0 - \mathcal P_E \bar u_0$, where $\mathcal P_E:= E(E^\top E)^{-1}E^T$ with $E=((u_0^{(1)}-\bar u_0)^\top,\dots,(u_0^{(J)}-\bar u_0)^\top)\in\mathbb R^{n_x\times J}$  is the orthogonal projection on the subspace $\spn\{u_0^{(j)} - \bar u_0 \mid j=1,\dots,J\}$.  {This subspace property also holds for general nonlinear forward maps $G$, see  Lemma~3.7 and Corollary~3.8 in \cite{CST19}.}}
Hence, our aim is to quantify TEKI as gradient flow w.r.t.~the constrained optimization problem

\begin{equation}\label{eq:constr_opti}
\min_{x\in {\mathcal B}}\ \Phi_R(x),
\end{equation}
where $(u_0^{(j)})_{j=1,\dots,J}$ is the initial ensemble for TEKI. In fact, a similar result has been shown in \cite{CST19} for the linear setting, where the gradient approximation is exact. We are going to extend the theory to nonlinear forward maps satisfying Assumptions~\ref{ass:smoothness} and \ref{ass:Taylor}.  Note that we will reformulate the optimization problem \eqref{eq:constr_opti} as equality constrained optimization problem, implying that there exists a unique minimizer $u_\ast\in\mathcal B$ of \eqref{eq:constr_opti}. In the following we formulate our main convergence result.

\begin{theorem}\label{thm:main}
Suppose Assumption~\ref{ass:smoothness} and Assumption~\ref{ass:Taylor} are satisfied, let $(u_t^{(j)},t\ge0)_{j=1,\dots,J}$ solve \eqref{eq:cont_deterministicTEKI} with linearly independent initial ensemble $(u_0^{(j)})_{j=1,\dots,J}$ and let $u_\ast\in\mathcal B$ be the unique global minimizer of \eqref{eq:constr_opti}. Then there exist $c_1,c_2>0$ such that
\begin{equation}\label{eq:main_result}
\frac{1}{J}\sum_{j=1}^J\Phi_R(u_t^{(j)})-\Phi_R(u_\ast) \le \left(\frac{c_1}{t+c_2}\right)^{\frac1\alpha},
\end{equation}
where $0<\alpha < \frac{L}{\mu}(\sigma_{\max}+c_{\mathrm{lip}}\lambda_{\max}\|C(u_0)\|_{\mathrm{HS}})$. 
\end{theorem}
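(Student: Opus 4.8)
The plan is to set up a Lyapunov argument for the energy
\[
\mathcal E(t):=\frac1J\sum_{j=1}^J\Phi_R(u_t^{(j)})-\Phi_R(u_\ast),
\]
to reduce the dynamics to a scalar differential inequality of the form $\dot{\mathcal E}(t)\le -\tfrac{a_0}{t+c_2}\,\mathcal E(t)+r(t)$ with a lower–order remainder $r$, and to close the argument by comparison with the ansatz $\psi(t)=(c_1/(t+c_2))^{1/\alpha}$. The starting point is Proposition~\ref{prop:gradientflow_structure}, which already controls $\dot{\mathcal E}$ by a dissipation term $-\frac1J\sum_j\langle\nabla\Phi_R(u_t^{(j)}),C(u_t)\nabla\Phi_R(u_t^{(j)})\rangle$ plus an error of order $V_e(t)^{3/2}$. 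Throughout I exploit the subspace property: along \eqref{eq:cont_deterministicTEKI} the particles remain in $\cB$ and the differences $u_t^{(j)}-\bar u_t$ remain in $\cB_0:=\spn\{u_0^{(j)}-\bar u_0\mid j=1,\dots,J\}$, so that $\mathrm{range}\,C(u_t)\subseteq\cB_0$ and $\langle g,C(u_t)g\rangle$ only sees the component $\mathcal P_E g$.

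First I convert the dissipation term into a multiple of $\mathcal E(t)$. Since $u_\ast$ minimises $\Phi_R$ over the affine space $\cB$, the restricted optimality condition reads $\mathcal P_E\nabla\Phi_R(u_\ast)=0$; as the restriction of the $\mu$-strongly convex $\Phi_R$ to $\cB$ is again $\mu$-strongly convex, this yields the PL-type inequality on the subspace
\[
\|\mathcal P_E\nabla\Phi_R(x)\|^2\ge 2\mu\,(\Phi_R(x)-\Phi_R(u_\ast)),\qquad x\in\cB .
\]
Applying it to each particle and using $\langle g,C(u_t)g\rangle\ge \lambda_{\min}(t)\,\|\mathcal P_E g\|^2$, where $\lambda_{\min}(t)$ denotes the smallest eigenvalue of $C(u_t)$ as an operator on $\cB_0$, turns the dissipation term into $-2\mu\,\lambda_{\min}(t)\,\mathcal E(t)$.

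The crux, and the step I expect to be hardest, is a quantitative lower bound on the collapse of the preconditioner, namely $\lambda_{\min}(t)\ge(\lambda_{\min}(0)^{-1}+2\Lambda t)^{-1}\ge c/(t+c_2)$. The natural route is to study the inverse covariance on $\cB_0$: differentiating $C(u_t)$ along \eqref{eq:cont_deterministicTEKI} and inserting the linearisation \eqref{eq:linearization} gives, up to the Taylor residual, $\frac{{\mathrm d}}{{\mathrm d}t}C(u_t)=-2\,C(u_t)\,H(u_t)\,C(u_t)$ with effective Hessian $H={\mathrm D}G^\ast\Gamma^{-1}{\mathrm D}G+C_0^{-1}$, hence $\frac{{\mathrm d}}{{\mathrm d}t}C(u_t)^{-1}=2H+(\text{residual})$ on $\cB_0$. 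Bounding the linear curvature by $\sigma_{\max}$ together with $\lambda_{\max}$ and controlling the nonlinear residual through Assumption~\ref{ass:Taylor} and the shrinking spread $V_e(t)$ — this is where the factor $c_{\mathrm{lip}}\lambda_{\max}\|C(u_0)\|_{\mathrm{HS}}$ enters — leads to the operator inequality $\frac{{\mathrm d}}{{\mathrm d}t}C(u_t)^{-1}\preceq 2\Lambda\,I$ on $\cB_0$ with $\Lambda=\sigma_{\max}+c_{\mathrm{lip}}\lambda_{\max}\|C(u_0)\|_{\mathrm{HS}}$. Integrating gives the claimed lower bound and, in particular, shows that $C(u_t)$ stays positive definite on $\cB_0$ for all $t$. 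The delicate points are that $C(u_t)$ is rank–deficient, so every spectral statement must be phrased on $\cB_0$, and that the nonlinear correction to the collapse dynamics must be dominated uniformly in time.

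Finally I control the remainder and close the estimate. Using $L$-smoothness of $\Phi_R$ to bound $\tfrac12\|\nabla\Phi_R(u_t^{(j)})\|^2+\Phi_R(u_t^{(j)})$ after averaging, together with the companion upper collapse estimate $V_e(t)\le c'/(t+c_2)$, the error term of Proposition~\ref{prop:gradientflow_structure} is of order $(t+c_2)^{-3/2}$ and therefore strictly lower order relative to the dissipation. Collecting everything yields $\dot{\mathcal E}(t)\le -\frac{2\mu c}{t+c_2}\,\mathcal E(t)+r(t)$ with $r(t)=O((t+c_2)^{-3/2})$. Inserting the ansatz $\psi(t)=(c_1/(t+c_2))^{1/\alpha}$ reduces the supersolution property to an inequality that holds for large $t$ precisely when $1/\alpha$ stays strictly below the dissipation rate; unwinding the dependence on $\mu,L$ and the collapse constant $\Lambda$ produces the admissible range $0<\alpha<\frac{L}{\mu}(\sigma_{\max}+c_{\mathrm{lip}}\lambda_{\max}\|C(u_0)\|_{\mathrm{HS}})$, and a scalar comparison principle then upgrades the supersolution bound to \eqref{eq:main_result}. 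The only genuinely new analytic input beyond the linear theory of \cite{CST19} is the simultaneous two–sided control of the collapse in the previous paragraph, which replaces the exact gradient identity available when $G$ is linear.
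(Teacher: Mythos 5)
Your proposal follows essentially the same route as the paper's proof: the approximate gradient-flow decomposition of Proposition~\ref{prop:gradientflow_structure}, the two-sided $1/t$ control of the ensemble collapse (Lemma~\ref{lem:ensemble_collapse} from above, Lemma~\ref{lem:cov_subspace} from below on the subspace), a PL-type inequality restricted to $\mathcal B$ (Lemma~\ref{lem:constr_PL}, which you derive directly from strong convexity of the restriction rather than via Lagrange multipliers), and a scalar Gronwall/comparison argument for $\dot V\le -\frac{1}{\alpha t+a}V+O(t^{-3/2})(V+c)$. The only substantive deviation is the covariance lower bound: you integrate the inverse-covariance equation with an effective Hessian, which forces you to dominate the Taylor residual against the growing $C(u_t)^{-1}$ via a bootstrap, whereas the paper tracks the smallest eigenvalue of $C(u_t)$ on $\mathcal B$ directly through the Riccati-type inequality $\dot\zeta_t\ge -m\,\zeta_t^2$ using only local Lipschitz continuity of $G$ --- both close, but the paper's version needs no linearization in that step.
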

{We note that the constants $c_1,c_2>0$ depend on various constants arising in Assumption~\ref{ass:smoothness}-\ref{ass:Taylor} and may be increasing for less well-behaved scenarios. However, these constants are independent of the time $t>0$ and the convergence still holds asymptotically.}

\section{Preliminaries}\label{sec:prelim}

In the following we present a row of auxiliary results, which are needed to prove the main convergence result in Theorem~\ref{thm:main}.

\subsection{Existence of solutions and ensemble collapse}

Recall that we assume a finite dimensional parameter space $\mathcal X = \mathbb R^{n_x}$. We start this section with verifying the well-posedness of the scheme. Therefore, we state that there exists a unique solution for the considered TEKI flow.
\begin{theorem}
Let $(u_0^{(j)})_{j=1,\dots,J}$ be the linearly independent initial ensemble and define its linear span $S = \spn\{u_0^{(j)}\mid j=1,\dots,J\}$. Then for all $T>0$ the equation \eqref{eq:cont_deterministicTEKI} has a unique solution $(u_t^{(j)}, t\in[0,T])\in C([0,T],\mathcal S)$.  In particular, it holds true that 
$\langle z, u_t^{(j)}\rangle = 0$
for all $z\in S^\perp$ and all $t\in[0,T]$, $j=1,\dots,J$.
\end{theorem}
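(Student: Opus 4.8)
The plan is to treat \eqref{eq:cont_deterministicTEKI} as a standard ODE existence problem for the stacked state $U = (u^{(1)},\dots,u^{(J)}) \in (\mathbb R^{n_x})^J$ governed by $\dot U = F(U)$, where the $j$-th block of $F$ is the right-hand side of \eqref{eq:cont_deterministicTEKI}. Since $G \in C^2(\mathcal X;\mathbb R^{n_y})$, both $G$ and $\mathrm{D}G$ are locally Lipschitz, and the empirical mean $\bar u$ and the covariances $C(u)$, $C^{u,G}(u)$ are polynomial in the blocks $u^{(k)}$ composed with $G$; hence $F$ is locally Lipschitz on $(\mathbb R^{n_x})^J$. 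The Picard–Lindel\"of theorem then yields a unique solution on a maximal interval $[0,T^\ast)$, and the task reduces to showing (i) the solution never leaves $\mathcal S$, and (ii) $T^\ast = \infty$.

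For the subspace property I would exploit that $C(u)$ and $C^{u,G}(u)$ have range contained in $\spn\{u^{(k)}-\bar u \mid k=1,\dots,J\}$, regardless of where the particles sit. Reading off the definitions, each block can be written $F^{(j)}(U) = \sum_k \beta_k^{(j)}(U)\,(u^{(k)}-\bar u)$ with $\beta_k^{(j)}(U) = \tfrac1J\langle G(u^{(k)})-\bar G,\Gamma^{-1}(y-G(u^{(j)}))\rangle - \tfrac1J\langle u^{(k)}-\bar u, C_0^{-1}u^{(j)}\rangle$, which are continuous (hence locally bounded) in $U$. Fixing $z \in S^\perp$ and setting $\psi_j(t)=\langle z, u_t^{(j)}\rangle$, differentiation and the identity $\langle z, u^{(k)}-\bar u\rangle = \psi_k - \tfrac1J\sum_l \psi_l$ produce a linear time-varying system $\dot\psi = M(t)\psi$ with $M(t)$ bounded on compact subintervals of $[0,T^\ast)$. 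Since $u_0^{(j)}\in S$ forces $\psi_j(0)=0$, uniqueness for the linear system (or Gr\"onwall) gives $\psi_j \equiv 0$, i.e. $\langle z, u_t^{(j)}\rangle = 0$ for all $z\in S^\perp$; equivalently $u_t^{(j)}\in\mathcal S$, which delivers the claimed $C([0,T],\mathcal S)$ regularity.

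For global existence I would invoke the continuation criterion: if $T^\ast<\infty$ then $\|U_t\|\to\infty$ as $t\uparrow T^\ast$, so it suffices to produce an a priori bound. I would proceed in two stages. First control the spread $V_e(t)=\tfrac1J\sum_k\|u_t^{(k)}-\bar u_t\|^2$: the Tikhonov term contributes $-\langle C_0^{-1}(u^{(j)}-u^{(k)}), C(u_t)C_0^{-1}(u^{(j)}-u^{(k)})\rangle \le 0$ to $\tfrac{d}{dt}\|u^{(j)}-u^{(k)}\|_{C_0}^2$, while the misfit term, after inserting the linearization \eqref{eq:linearization}, splits into a dissipative leading part $-\langle\,\cdot\,, C\,\mathrm{D}G^\ast\Gamma^{-1}\mathrm{D}G\,\cdot\,\rangle$ and a residual which is cubic in the spread by \eqref{eq:Taylor_residual}, so a Gr\"onwall argument bounds $V_e$ on $[0,T^\ast)$. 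Second, once the spread is bounded the covariances $C(u_t)$, $C^{u,G}(u_t)$ are bounded, and an energy estimate for $\tfrac1J\sum_j\|u_t^{(j)}\|_{C_0}^2$, whose regularization contribution $-\tfrac1J\sum_j\langle C_0^{-1}u^{(j)}, C(u_t)C_0^{-1}u^{(j)}\rangle$ is again $\le 0$, controls the mean and hence all of $\|U_t\|$ on $[0,T^\ast)$, forcing $T^\ast=\infty$.

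The main obstacle is precisely this last a priori bound, because $G$ is only assumed locally (not globally) Lipschitz: a naive local Lipschitz estimate for the misfit term presupposes the very boundedness one is trying to establish, so the argument must be bootstrapped through the spread (whose nonlinear error is genuinely cubic and therefore Gr\"onwall-controllable) before the mean can be closed, with the dissipativity of the Tikhonov drift $-C(u_t)C_0^{-1}$ serving as the structural feature that prevents escape. As the paper notes after Assumption~\ref{ass:Taylor}, a clean alternative is to modify $G$ to be compactly supported, so that the misfit vector field is globally bounded and globally Lipschitz; global existence is then immediate from the continuation criterion and only steps (i) and the local theory remain.
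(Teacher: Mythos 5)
Your overall architecture (Picard--Lindel\"of for local existence, invariance of $S$ by writing each block of the vector field as a combination of the $u^{(k)}-\bar u$ so that $\psi_j(t)=\langle z,u_t^{(j)}\rangle$ solves a linear homogeneous system with $\psi(0)=0$, and an a priori bound to rule out finite-time blow-up) matches the paper's, and your subspace argument is a correct, self-contained version of what the paper simply cites from \cite{CST19}. The one step that does not hold as written is your ``stage 1'': you propose to bound the spread by inserting the linearization \eqref{eq:linearization}, keeping a dissipative leading part and a residual controlled by \eqref{eq:Taylor_residual}, and then applying Gr\"onwall. That residual is \emph{superlinear} in $V_e$ (once $\|C^{u,G}\|_{\mathrm{HS}}$ is itself estimated through the spread one gets order $V_e^{5/2}$), and a differential inequality $\dot V_e\le c\,V_e^{p}$ with $p>1$ is not Gr\"onwall-controllable up to $T^\ast$ --- its comparison solution blows up in finite time --- while the nonpositive leading term $-\langle\,\cdot\,,C\,{\mathrm D}G^\ast\Gamma^{-1}{\mathrm D}G\,\cdot\,\rangle$ cannot absorb it without a coercivity lower bound that is unavailable. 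The step is rescued by an exact computation that needs no Taylor expansion at all: since $\frac1J\sum_j\langle e^{(j)},C^{u,G}(u)\Gamma^{-1}(G(u^{(j)})-\bar G)\rangle=\Tr\bigl(C^{u,G}(u)\Gamma^{-1}(C^{u,G}(u))^\top\bigr)$, one has
\begin{equation*}
\frac{{\mathrm d}V_e(t)}{{\mathrm d}t}=-2\|C^{u,G}(u_t)\Gamma^{-1/2}\|_{\mathrm{HS}}^2-2\|C(u_t)C_0^{-1/2}\|_{\mathrm{HS}}^2\le 0,
\end{equation*}
so the spread is monotonically non-increasing and stage 1 should simply be replaced by this identity.

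With that repair your stage 2 --- a quadratic energy estimate for $\frac1J\sum_j\|u_t^{(j)}\|_{C_0}^2$ in which the Tikhonov contribution is nonpositive and $\|C^{u,G}(u_t)\|_{\mathrm{HS}}$ is controlled by the now-bounded spread --- closes via a genuinely linear Gr\"onwall inequality and is a valid alternative to the paper's argument. The paper instead runs a single Lyapunov function $V_{\bar u}(u_t)=\log(1+\|\bar u_t\|^2)+\frac{B_1}J\sum_j\|u_t^{(j)}-\bar u_t\|^2+B_2$: the logarithm tames the quadratic growth of $\Phi_R$ through $z\varphi'(z)\le1$, and the exact dissipation of the spread term is used to cancel the Hilbert--Schmidt norms $\|C^{u,G}(u_t)\Gamma^{-1/2}\|_{\mathrm{HS}}^2+\|C(u_t)C_0^{-1/2}\|_{\mathrm{HS}}^2$ produced by Young's inequality in the mean estimate, so no separate covariance bound is needed. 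Both routes share the weakness you flag honestly: they require linear growth of $G$ (a uniform Lipschitz constant) to get $\|G(u)-y\|\le c(1+\|u\|)$, which ``locally Lipschitz'' does not literally supply; the paper quietly treats $c_{\mathrm{lip}}$ as global, and your fallback of truncating $G$ is the clean way out.
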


\begin{proof}
Due to local Lipschitz continuity local solutions follow from standard ODE theory, see e.g.~\cite[Theorem~3.2]{CST19} for details. Moreover, we refer to \cite[Corollary~3.8]{CST19} for the subspace property, i.e.~while a solution exists it holds true that $\langle z, u_t^{(j)}\rangle = 0$
for all $z\in S^\perp$ and all $j=1,\dots,J$.  In the following, we extend the existence of solutions globally. Therefore, we consider the Lyapunov function
\begin{equation*}
V_{\bar u} (u_t) = \varphi(\|\bar u_t\|^2) + \frac{B_1}J\sum_{j=1}^J\|u_t^{(j)}-\bar u_t\|^2 + B_2
\end{equation*}
for $\varphi:\R\to\R$ with $\varphi(z) = \log(1+z)$ and sufficiently large $B_1,B_2>0$. We firstly observe that $z\varphi'(z) = \frac{z}{1+z}\le 1$ for $z\ge0$.  The time evolution of $\varphi(\|\bar u_t\|^2)$ is given by
\begin{align*}
 \frac{\mathrm{d}}{{\mathrm{d}}t}\varphi(\|\bar u_t\|^2) &= 2\varphi'(\|\bar u_t\|^2)\left(- \langle \bar u_t, C^{u,G}(u_t)\Gamma^{-1}(\bar G_t-y)\rangle - \langle \bar u_t, C(u_t)C_0^{-1}\bar u_t\rangle\right)\\
 &\le 2\varphi'(\|\bar u_t\|^2) \left(\|\bar u_t\|\|\bar G_t-y\|_\Gamma \|C^{u,G}(u_t)\Gamma^{-1/2}\|_{\mathrm{HS}} + \|\bar u_t\|\|\bar u_t\|_{C_0} \|C(u_t)C_0^{-1/2}\|_{\mathrm{HS}}\right)\\
&\le \varphi'(\|\bar u_t\|^2)^2\|\bar u_t\|^2 (2\|\bar G_t - G(\bar u_t)\|_\Gamma^2+2\|G(\bar u_t)-y\|_\Gamma^2) +  \|C^{u,G}(u_t)\Gamma^{-1/2}\|_{\mathrm{HS}}^2\\ &\quad+  \varphi'(\|\bar u_t\|^2)^2\|\bar u_t\|^2 \|\bar u_t\|_{C_0}^2 +  \|C(u_t)C_0^{-1/2}\|_{\mathrm{HS}}^2,
\end{align*}
where we have applied Young's inequality and 
\begin{equation*}
 \|\bar G_t-y\|_\Gamma^2\le (\|\bar G_t-G(\bar u_t)\|_\Gamma+\|G(\bar u_t)-y\|_\Gamma)^2\le 2\|\bar G_t-G(\bar u_t)\|_\Gamma^2+2\|G(\bar u_t)-y\|_\Gamma)^2
\end{equation*}
for the last inequality.
With Lipschitz continuity of $G$, Cauchy-Schwarz inequality and Jensen's inequality we have that 
\begin{align*}
\|\bar G - G(\bar u) \|^2 = \frac{1}{J^2}\sum_{j,l}^2 \langle G(u^{(j)})-G(\bar u),G(u^{(l)})-G(\bar u)\rangle &\le c_{\mathrm{lip}}^2\left(\frac{1}{J}\sum_{j=1}^J \|u^{(j)} - \bar u\|\right)^2 \\
&\le c_{\mathrm{lip}}^2\frac{1}{J}\sum_{j=1}^J \|u^{(j)} - \bar u\|^2.
\end{align*}
Hence,  using $z\varphi'(z)\le 1$ we obtain
\begin{align*}
 \frac{\mathrm{d}}{{\mathrm{d}}t}\varphi(\|\bar u_t\|^2) &\le c_1\frac{1}{J}\sum_{j=1}^J \|u_t^{(j)} - \bar u_t\|^2+ \tilde c_2 \varphi'(\|\bar u\|^2)\Phi_R(\bar u_t)\\ &\quad+  \|C^{u,G}(u_t)\Gamma^{-1/2}\|_{\mathrm{HS}}^2+ \|C(u_t)C_0^{-1/2}\|_{\mathrm{HS}}^2\\
&\le c_1\frac{1}{J}\sum_{j=1}^J \|u_t^{(j)} - \bar u_t\|^2+ c_2 +  \|C^{u,G}(u_t)\Gamma^{-1/2}\|_{\mathrm{HS}}^2+ \|C(u_t)C_0^{-1/2}\|_{\mathrm{HS}}^2
\end{align*}
for some constants $c_1,c_2>0$. Here we have used, that $\Phi_R(u)\le c\|u\|^2$ for local Lipschitz continuous $G$. On the other side, we have
\begin{align*}
 \frac{{\mathrm d}\frac1J\sum_{j=1}^J \|u_t^{(j)}-\bar u_t\|^2}{{\mathrm d}t} &= -\frac{2}{J^2} \sum_{j,k=1}^J \langle u_t^{(j)}-\bar u_t,u_t^{(k)}-\bar u_t\rangle \langle G(u_t^{(k)})-\bar G_t,G(u_t^{(j)})-\bar G_t\rangle_{\Gamma}\\
					&\quad - \frac{2}{J^2} \sum_{j,k=1}^J \langle u_t^{(j)}-\bar u_t,u_t^{(k)}-\bar u_t\rangle \langle u_t^{(k)}-\bar u_t,C_0^{-1}(u_t^{(j)}-\bar u_t)\rangle\\
					&= -2\|C^{u,G}(u_t)\Gamma^{-1/2}\|_{\mathrm{HS}}^2-2\|C(u_t)C_0^{-1/2}\|_{\mathrm{HS}}^2.
\end{align*}
It follows for $B_1>c_1$ and $B_2>c_2$ that
\[\frac{\mathrm{d}}{{\mathrm{d}}t}V_{\bar u}(u_t) \le c V_{\bar u}(u_t)\]
and by application of Gronwall's inequality for all $T>0$ the solution of \eqref{eq:cont_deterministicTEKI} remains bounded.
\end{proof}

{We note that a similar type of Lyapunov function $\varphi$ has been used in \cite{BSWW2021} in order to show finite logarithmic moments of the stochastic formulation. While it is not clear whether $L_2$ bounds hold for (T)EKI, one is at least able to derive bounds on logarithmic moments. }

In the following we define the spread of the particle system by
\begin{equation}
e_t^{(j)} = u_t^{(j)}- \bar u_t.
\end{equation}
{From Lemma~3.7 and Corollary~3.8 in \cite{CST19} it is known that the TEKI flow satisfies the following extended subspace property.
\begin{proposition}[Corollary~3.8, \cite{CST19}]\label{prop:subspace_prop}
Let $(u_0^{(j)})_{j=1,\dots,J}$ be the linearly independent initial ensemble and define the subspace
\[\cB = u_0^\perp + \spn\{e_0^{(j)} \mid j=1,\dots,J\},  \]
where $u_0^\perp = \bar u_0 - \mathcal P_E \bar u_0$,  where $\mathcal P_E= E(E^\top E)^{-1}E^T$ with $E=((e_0^{(1)})^\top,\dots,(e_0^{(J)})^\top)\in\mathbb R^{n_x\times J}$  is the orthogonal projection on the subspace $\spn\{e_0^{(j)} \mid j=1,\dots,J\}$. Then for $T\ge 0$ the unique solution $(u_t^{(j)}, t\in[0,T])$  of the TEKI flow \eqref{eq:cont_deterministicTEKI} remains in $\mathcal B$, i.e.~
$u_t^{(j)} \in \mathcal B$ for all $t\in[0,T]$ and all $j=1,\dots,J$.
\end{proposition}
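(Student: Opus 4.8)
The plan is to exploit the algebraic structure of the two sample covariances appearing in \eqref{eq:cont_deterministicTEKI}. Writing $e_t^{(k)} = u_t^{(k)} - \bar u_t$, both
\[
C^{u,G}(u_t) = \frac1J\sum_{k=1}^J e_t^{(k)}\otimes (G(u_t^{(k)})-\bar G_t), \qquad C(u_t) = \frac1J\sum_{k=1}^J e_t^{(k)}\otimes e_t^{(k)}
\]
have all their columns equal to linear combinations of the deviations $e_t^{(k)}$; consequently, for every $w,v\in\cX$ the products $C^{u,G}(u_t)w$ and $C(u_t)v$ lie in $\spn\{e_t^{(k)}\mid k=1,\dots,J\}$. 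In particular, each velocity $\dot u_t^{(j)}$ in \eqref{eq:cont_deterministicTEKI} is a linear combination of the current deviations. The goal is to upgrade this local observation into the statement that no motion ever occurs in directions orthogonal to the \emph{fixed} direction space $S_0 := \spn\{e_0^{(j)}\mid j=1,\dots,J\}$ of $\cB$.

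To carry this out I would fix an arbitrary $z\in S_0^\perp$ and introduce the scalar quantities $p_j(t) := \langle z, u_t^{(j)} - u_0^\perp\rangle$, $j=1,\dots,J$. Since $u_0^\perp = \bar u_0 - \mathcal P_E\bar u_0$ with $\mathcal P_E\bar u_0\in S_0$, orthogonality of $z$ to $S_0$ gives $\langle z, u_0^\perp\rangle = \langle z,\bar u_0\rangle$; combined with $e_0^{(j)}\in S_0$ this yields $p_j(0) = \langle z, u_0^{(j)}\rangle - \langle z,\bar u_0\rangle = \langle z, e_0^{(j)}\rangle = 0$. Differentiating $p_j$ along the flow and inserting the outer-product representations above, every occurrence of a covariance contributes the factor $\langle z, e_t^{(k)}\rangle$, which a short computation identifies with $p_k(t) - \bar p(t)$, where $\bar p(t):=\frac1J\sum_{l=1}^J p_l(t)$. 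Hence
\[
\dot p_j(t) = \frac1J\sum_{k=1}^J \big(p_k(t)-\bar p(t)\big)\big(\langle G(u_t^{(k)})-\bar G_t,\Gamma^{-1}(y-G(u_t^{(j)}))\rangle - \langle e_t^{(k)}, C_0^{-1}u_t^{(j)}\rangle\big),
\]
which is a homogeneous linear system $\dot{\mathbf p}(t) = M(t)\mathbf p(t)$ for the vector $\mathbf p = (p_1,\dots,p_J)^\top$.

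The coefficient matrix $M(t)$ is continuous on $[0,T]$, since the unique solution of \eqref{eq:cont_deterministicTEKI} is continuous on $[0,T]$ by the preceding existence theorem and $G$ is continuous. As $\mathbf p(0)=0$, uniqueness for linear ODEs (equivalently, Gronwall's inequality applied to $\|\mathbf p(t)\|$) forces $\mathbf p(t)\equiv 0$ on $[0,T]$. Because $z\in S_0^\perp$ was arbitrary, this means $\langle z, u_t^{(j)}-u_0^\perp\rangle = 0$ for all $z\in S_0^\perp$, i.e.~$u_t^{(j)}-u_0^\perp\in S_0$, which is precisely $u_t^{(j)}\in u_0^\perp + S_0 = \cB$. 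I expect no analytic obstacle here: global existence and continuity of the trajectory are already in hand, so the only genuine content is the linear-algebraic bookkeeping—verifying the range structure of the two covariances and the identity $\langle z,e_t^{(k)}\rangle = p_k(t) - \bar p(t)$—after which the homogeneous linear ODE closes the argument immediately. The one point requiring care is that the test must be taken against the \emph{fixed} complement $S_0^\perp$ rather than the moving span of the current deviations; testing against $S_0^\perp$ is exactly what makes the initial data vanish and lets uniqueness propagate $\mathbf p\equiv 0$ forward in time.
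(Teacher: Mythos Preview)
Your argument is correct. The key identities are all verified: the range of both $C^{u,G}(u_t)$ and $C(u_t)$ lies in $\spn\{e_t^{(k)}\}$; the initial condition $p_j(0)=0$ follows from $u_0^{(j)}-u_0^\perp = e_0^{(j)}+\mathcal P_E\bar u_0\in S_0$; the relation $\langle z,e_t^{(k)}\rangle = p_k(t)-\bar p(t)$ is immediate; and the resulting system for $\mathbf p$ is genuinely homogeneous linear with continuous coefficients, so Gronwall (or uniqueness for linear ODEs) gives $\mathbf p\equiv 0$.

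As for comparison: the paper does not supply its own proof of this proposition. It is stated as Corollary~3.8 of \cite{CST19} and simply invoked. Your write-up therefore provides a self-contained argument where the paper only gives a citation. The approach you take---testing against a fixed $z\in S_0^\perp$ and closing a homogeneous linear ODE for the projections---is the standard way such invariance results are proved, and is presumably close in spirit to what \cite{CST19} does (the paper's existence theorem already alludes to the weaker statement $\langle z,u_t^{(j)}\rangle=0$ for $z\in\mathcal S^\perp$ being derived there). Your emphasis on testing against the \emph{fixed} complement $S_0^\perp$ rather than the moving complement is exactly the right point to flag.
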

}

On the one side the spread of the particle system describes the accuracy of the gradient approximation, which gets more accurate for a less spread particle system. On the other side, the spread of the particle system determines the preconditioning effect through the sample covariance. For this reason we aim for a collapse of the ensemble but not too fast. In our first result, we prove that the ensemble collapses with rate $1/t$. {Recall that $C_0$ is assumed to be a fixed symmetric positive definite matrix, and hence all of its eigenvalues are strictly positive.}
\begin{lemma}\label{lem:ensemble_collapse}
Let $(u_t^{(j)},t\ge0)_{j=1,\dots,J}$ be the solution of \eqref{eq:cont_deterministicEKI} initialized by a linearly independent ensemble $(u_0^{(j)})_{j=1,\dots,J}$. Then the mapping $V_e:\mathbb R_+\to \mathbb R_+$ with $t\mapsto \frac{1}{J}\sum_{j=1}^J\|e_t\|^2$
is bounded by
\[V_e(t) \le \frac{1}{\frac{2\sigma_{\min}}{J} t + V_e(0)^{-1}},\]
where $\sigma_{\min}$ denotes the smallest eigenvalue of $C_0^{-1}$.
\end{lemma}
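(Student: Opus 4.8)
The plan is to reduce the statement to a scalar Riccati-type differential inequality for $V_e$ and then integrate it by the comparison principle. First I would recall the evolution of the ensemble spread already computed in the well-posedness proof, namely
\[
\frac{{\mathrm d}}{{\mathrm d}t}V_e(t) = -2\|C^{u,G}(u_t)\Gamma^{-1/2}\|_{\mathrm{HS}}^2 - 2\|C(u_t)C_0^{-1/2}\|_{\mathrm{HS}}^2 .
\]
Both terms are nonpositive, and since only an upper bound on $V_e$ is sought I can simply discard the data-misfit term and work with $\frac{{\mathrm d}}{{\mathrm d}t}V_e(t)\le -2\|C(u_t)C_0^{-1/2}\|_{\mathrm{HS}}^2$. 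The conceptual point here is that the regularization term alone already forces the collapse, independently of the forward model $G$.

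Next I would convert the right-hand side into a quadratic expression in $V_e$. Writing $\|C(u_t)C_0^{-1/2}\|_{\mathrm{HS}}^2 = \Tr(C_0^{-1}C(u_t)^2)$ and using that $C_0^{-1}-\sigma_{\min} I$ and $C(u_t)^2$ are both positive semidefinite (so the trace of their product is nonnegative), I obtain $\|C(u_t)C_0^{-1/2}\|_{\mathrm{HS}}^2 \ge \sigma_{\min}\Tr(C(u_t)^2) = \sigma_{\min}\|C(u_t)\|_{\mathrm{HS}}^2$. It then remains to bound $\Tr(C(u_t)^2)$ from below by $V_e^2$. Using $C(u_t)=\frac1J\sum_j e_t^{(j)}\otimes e_t^{(j)}$ I would keep only the diagonal terms of $\Tr(C(u_t)^2)=\frac1{J^2}\sum_{j,k}\langle e_t^{(j)},e_t^{(k)}\rangle^2$ and apply Cauchy–Schwarz in the form $\sum_j\|e_t^{(j)}\|^4\ge \frac1J\big(\sum_j\|e_t^{(j)}\|^2\big)^2$, which yields exactly $\Tr(C(u_t)^2)\ge V_e(t)^2/J$. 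Combining these estimates gives the Riccati inequality $\frac{{\mathrm d}}{{\mathrm d}t}V_e(t) \le -\frac{2\sigma_{\min}}{J}V_e(t)^2$.

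Finally I would integrate this inequality. Since the initial ensemble is linearly independent we have $V_e(0)>0$, and as $V_e\ge0$ throughout, on any interval where $V_e>0$ the substitution $W=1/V_e$ gives $\dot W = -\dot V_e/V_e^2 \ge \frac{2\sigma_{\min}}{J}$, hence $W(t)\ge W(0)+\frac{2\sigma_{\min}}{J}t = V_e(0)^{-1}+\frac{2\sigma_{\min}}{J}t$, and inverting yields the claimed bound; equivalently one invokes the ODE comparison principle against $\dot y = -\frac{2\sigma_{\min}}{J}y^2$ with $y(0)=V_e(0)$. I expect the only genuine subtlety to be the quadratic lower bound $\Tr(C(u_t)^2)\ge V_e(t)^2/J$ — matching the constant $1/J$ exactly via the diagonal-plus-Cauchy–Schwarz argument rather than the sharper $1/(J-1)$ coming from $\operatorname{rank} C(u_t)\le J-1$ — together with the brief justification that $V_e$ remains strictly positive so the reciprocal substitution is legitimate; the remaining integration is routine.
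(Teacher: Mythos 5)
Your proof is correct and follows essentially the same route as the paper: the paper likewise drops the nonpositive data-misfit term, bounds the regularization term below by $\frac{2\sigma_{\min}}{J}V_e(t)^2$ (your trace/Cauchy--Schwarz chain is exactly the computation the paper leaves implicit), and integrates the resulting Riccati inequality by a Lyapunov-type comparison argument. The only difference is that you spell out the intermediate estimates and the reciprocal substitution explicitly, which the paper omits.
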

\begin{proof} 
The evolution of $V_e(t)$ is given by
\begin{align*}
 \frac{{\mathrm d}\frac1J\sum_{j=1}^J \|u_t^{(j)}-\bar u_t\|^2}{{\mathrm d}t} &= -\frac{2}{J^2} \sum_{j,k=1}^J \langle u_t^{(j)}-\bar u_t,u_t^{(k)}-\bar u_t\rangle \langle G(u_t^{(k)})-\bar G_t,G(u_t^{(j)})-\bar G_t\rangle_{\Gamma}\\
					&\quad - \frac{2}{J^2} \sum_{j,k=1}^J \langle u_t^{(j)}-\bar u_t,u_t^{(k)}-\bar u_t\rangle \langle u_t^{(k)}-\bar u_t,C_0^{-1}(u_t^{(j)}-\bar u_t)\rangle\\
					&= -2\|C^{u,G}(u_t)\Gamma^{-1/2}\|_{\mathrm{HS}}^2\\ &\quad-\frac{2}{J^2} \sum_{j,k=1}^J \langle u_t^{(j)}-\bar u_t,u_t^{(k)}-\bar u_t\rangle \langle u_t^{(k)}-\bar u_t,C_0^{-1}(u_t^{(j)}-\bar u_t)\rangle\\
					&\le -\frac{2\sigma_{\min}}{J} \left(\frac1J\sum_{j=1}^J \|u_t^{(j)}-\bar u_t\|^2\right)^2.
\end{align*}
Hence, we have shown \[\frac{{\mathrm d}V_e(t)}{{\mathrm d}t} \le - \frac{2\sigma_{\min}}{J}V_e(t)^2\]
and the assertion follows by Lyapunov-type argument.
\end{proof}

In the following result, we state that, although the sample covariance degenerates along the particle evolution, it remains a strictly positive definite operator acting on the subspace {$\mathcal B$}.

\begin{lemma}\label{lem:cov_subspace}
Let $(u_t^{(j)},t\ge0)_{j=1,\dots,J}$ be the solution of \eqref{eq:cont_deterministicEKI} initialized by a linearly independent ensemble $(u_0^{(j)})_{j=1,\dots,J}$ such that
\begin{equation}\label{eq:smallest_eigenvalue}
\zeta_0 = \min_{z\in {\mathcal B},\ \|z\|=1}\ \langle z,C(u_0)z\rangle > 0.
\end{equation}
 For each $z\in {\mathcal B}$ with $\|z\|=1$ it holds true that
\begin{equation*}
\langle z, C(u_t) z\rangle \ge \frac{1}{m t+\zeta_0^{-1}},
\end{equation*}
where $m=2 (c_{\mathrm{lip}}^2\lambda_{\max}V_e(0)+\sigma_{\max})>0$ depends on the smallest and largest eigenvalue $\sigma_{\max}$ and $\sigma_{\min}$ of $C_0^{-1}$,  the largest eigenvalue $\lambda_{\max}$ of $\Gamma^{-1}$ and the Lipschitz constant $c_{\mathrm{lip}}$ of $G$.
\end{lemma}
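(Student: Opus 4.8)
The plan is to track the smallest Rayleigh quotient of the sample covariance restricted to $\cB$, namely $\zeta(t) := \min_{z\in\cB,\ \|z\|=1}\langle z, C(u_t)z\rangle$ with $\zeta(0)=\zeta_0$, and to establish the Riccati-type differential inequality $\frac{\mathrm d}{\mathrm dt}\zeta(t)\ge -m\,\zeta(t)^2$. Once this is in hand, the claimed bound follows from exactly the same Lyapunov-type integration used in Lemma~\ref{lem:ensemble_collapse}, giving $\zeta(t)\ge (mt+\zeta_0^{-1})^{-1}$. The work therefore reduces to producing a quadratic lower bound on the time derivative of $\zeta$.

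To differentiate the covariance I would first derive its evolution directly from \eqref{eq:cont_deterministicTEKI}. Writing $e_t^{(j)}=u_t^{(j)}-\bar u_t$ and using that the ensemble mean cancels the data- and mean-dependent constants in the drift, one gets $\frac{\mathrm d}{\mathrm dt}e_t^{(j)} = -C^{u,G}(u_t)\Gamma^{-1}(G(u_t^{(j)})-\bar G_t) - C(u_t)C_0^{-1}e_t^{(j)}$, and hence the closed matrix ODE $\frac{\mathrm d}{\mathrm dt}C(u_t) = -2\,C^{u,G}(u_t)\Gamma^{-1}C^{G,u}(u_t) - 2\,C(u_t)C_0^{-1}C(u_t)$, both summands being symmetric and positive semidefinite, which re-confirms that $C(u_t)$ is non-increasing in the Loewner order. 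By Proposition~\ref{prop:subspace_prop} the range of $C(u_t)$ stays inside $\cB$ and, by the hypothesis $\zeta_0>0$, $C(u_t)$ is positive definite on $\cB$; consequently a minimizing direction $z_t$ for $\zeta(t)$ is a genuine eigenvector, $C(u_t)z_t=\zeta(t)z_t$. Using Danskin's theorem for the lower Dini derivative of the pointwise minimum gives $\frac{\mathrm d}{\mathrm dt}\zeta(t)\ge \langle z_t, \tfrac{\mathrm d}{\mathrm dt}C(u_t)\,z_t\rangle$, so it remains to lower-bound the two quadratic forms at $z_t$. The regularization term is immediate from the eigenvector relation: $\langle z_t, C C_0^{-1}C z_t\rangle = \zeta(t)^2\langle z_t, C_0^{-1}z_t\rangle\le \sigma_{\max}\,\zeta(t)^2$.

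The crux is the data-misfit term $\langle z_t, C^{u,G}\Gamma^{-1}C^{G,u}z_t\rangle = \|\Gamma^{-1/2}C^{G,u}(u_t)z_t\|^2$. Here I would exploit the eigenvector relation together with the linearization of Assumption~\ref{ass:Taylor}: writing $G(u_t^{(k)})-\bar G_t = {\mathrm D}G(\bar u_t)e_t^{(k)} + r_t^{(k)}$ with $\|r_t^{(k)}\|$ quadratic in the spread, the weighted sum defining $C^{G,u}(u_t)z_t=\frac1J\sum_k\langle e_t^{(k)},z_t\rangle\,(G(u_t^{(k)})-\bar G_t)$ collapses, modulo the residual, to $\zeta(t)\,{\mathrm D}G(\bar u_t)z_t$, because $\frac1J\sum_k\langle e_t^{(k)},z_t\rangle e_t^{(k)} = C(u_t)z_t = \zeta(t)z_t$. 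The leading part is bounded by $c_{\mathrm{lip}}^2\lambda_{\max}\,\zeta(t)^2$, while the residual contribution is controlled through Assumption~\ref{ass:Taylor} and the ensemble-collapse estimate $V_e(t)\le V_e(0)$ of Lemma~\ref{lem:ensemble_collapse}; this is what produces the factor $V_e(0)$ in $m$. Adding the two bounds yields $\frac{\mathrm d}{\mathrm dt}\zeta(t)\ge -2\big(c_{\mathrm{lip}}^2\lambda_{\max}V_e(0)+\sigma_{\max}\big)\zeta(t)^2=-m\,\zeta(t)^2$, and integrating completes the proof.

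I expect the main obstacle to be precisely this data-misfit estimate. The naive Lipschitz-plus-Cauchy--Schwarz bound only yields $\|\Gamma^{-1/2}C^{G,u}z_t\|^2\le c_{\mathrm{lip}}^2\lambda_{\max}V_e(t)\,\zeta(t)$, i.e.~an operator inequality $C^{u,G}\Gamma^{-1}C^{G,u}\preceq c_{\mathrm{lip}}^2\lambda_{\max}V_e(t)\,C(u_t)$, which is merely linear in $\zeta$; fed into the differential inequality this would degrade the rate to an exponential one and fail to reproduce the stated $1/t$ decay. Converting it into the quadratic form $\propto\zeta(t)^2$ is what genuinely requires the eigenvector structure $C(u_t)z_t=\zeta(t)z_t$ and the near-linearity $C^{u,G}(u_t)\approx C(u_t){\mathrm D}G^\ast(u_t)$, so that the nonlinear correction is absorbed into the $V_e(0)$ factor rather than into a term linear in $\zeta$. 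A secondary technical point is justifying the differentiation of the non-smooth map $t\mapsto\zeta(t)$, which I would handle by the standard Danskin/upper-Dini-derivative argument, or equivalently by bounding $\frac{\mathrm d}{\mathrm dt}\langle w, C(u_t)^{-1}w\rangle$ for the top eigenvector $w$ of the restricted inverse $C(u_t)^{-1}$ on $\cB$.
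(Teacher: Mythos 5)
Your overall strategy coincides with the paper's: both derive the closed Riccati-type matrix ODE $\frac{\mathrm d}{\mathrm dt}C(u_t)=-2C^{u,G}(u_t)\Gamma^{-1}C^{G,u}(u_t)-2C(u_t)C_0^{-1}C(u_t)$, restrict attention to the smallest eigenvalue $\zeta_t$ of $C(u_t)$ acting on $\cB$, establish the quadratic differential inequality $\frac{\mathrm d}{\mathrm dt}\zeta_t\ge-m\zeta_t^2$, and integrate. Your Danskin argument replaces the paper's direct differentiation of the eigenpair (using $\langle\varphi(t),\dot\varphi(t)\rangle=0$ and the eigenvector relation to kill the $\dot\varphi$ contribution); either is fine, and the treatment of the regularization term is identical. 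Where you genuinely diverge is the data-misfit term: the paper bounds $\|C^{G,u}(u_t)z\|^2$ by $\|C^{G,G}(u_t)\|_{\mathrm{HS}}\,\|C(u_t)z\|$ via Cauchy--Schwarz together with the trace bound $\|C^{G,G}(u_t)\|_{\mathrm{HS}}\le c_{\mathrm{lip}}^2V_e(0)$, evaluated at the eigenvector where $\|C(u_t)\varphi(t)\|=\langle\varphi(t),C(u_t)\varphi(t)\rangle=\zeta_t$; this uses only Lipschitz continuity and never invokes Assumption~\ref{ass:Taylor}. You instead linearize $G$ around $\bar u_t$ and use $C(u_t)z_t=\zeta_t z_t$ to extract the leading term $\zeta_t\,{\mathrm D}G(\bar u_t)z_t$. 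Your diagnosis that a naive Lipschitz bound is only linear in $\zeta_t$ and would degrade the $1/t$ rate to an exponential one is exactly the right concern, and it is in fact the delicate point of this lemma.

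The gap is in your residual estimate. Writing $G(u_t^{(k)})-\bar G_t={\mathrm D}G(\bar u_t)e_t^{(k)}+r_t^{(k)}$ with $\|r_t^{(k)}\|\le b_{\mathrm{res}}(\|e_t^{(k)}\|^2+V_e(t))$, Cauchy--Schwarz gives, up to a $J$-dependent constant, $\|\frac1J\sum_k\langle e_t^{(k)},z_t\rangle\,r_t^{(k)}\|\le\langle z_t,C(u_t)z_t\rangle^{1/2}\bigl(\frac1J\sum_k\|r_t^{(k)}\|^2\bigr)^{1/2}\lesssim \zeta_t^{1/2}\,b_{\mathrm{res}}\,V_e(t)$, which is of order $\zeta_t^{1/2}$, not $\zeta_t$. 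After squaring, this contributes a term of order $V_e(t)^2\,\zeta_t$ to the differential inequality, i.e.\ a term \emph{linear} in $\zeta_t$ --- precisely the structure you set out to avoid --- so ``adding the two bounds'' does not yield $-m\zeta_t^2$, and the residual is not what produces the factor $V_e(0)$ in $m$. The argument is repairable: since $V_e(t)^2\lesssim t^{-2}$ by Lemma~\ref{lem:ensemble_collapse} is integrable in time, an integrating factor absorbs the linear term at the cost of a fixed multiplicative constant, so the $1/t$ lower bound survives, but with a constant different from the stated $m=2(c_{\mathrm{lip}}^2\lambda_{\max}V_e(0)+\sigma_{\max})$. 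You should either carry out that extra step explicitly or fall back on the paper's direct Hilbert--Schmidt bound evaluated at the eigenvector.
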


\begin{proof}
We note that the following proof is closely related to the proof of Theorem~3.5 in \cite{CST19}, where the authors derived an upper bound on the sample covariance matrix.
The time evolution of the sample covariance is given by
\begin{align*}
\frac{{\mathrm d} C(u_t)}{{\mathrm d}t} &= \frac{1}{J}\sum_{j=1}^J \frac{{\mathrm d}e_t^{(j)}}{{\mathrm d}t}\left(e_t^{(j)}\right)^\top + \frac{1}{J}\sum_{j=1}^J e_t^{(j)}\left(\frac{{\mathrm d}e_t^{(j)}}{{\mathrm d}t}\right)^\top\\
& = -2C^{u,G}(u_t)\Gamma^{-1}C^{G,u}(u_t) - 2C(u_t)C_0^{-1}C(u_t).
\end{align*}
Let $z\in {\mathcal B}$ with $\|z\|=1$, then we have that
\begin{align*}
\langle z, \frac{{\mathrm d} C(u_t)}{{\mathrm d}t} z\rangle
& = -2 \|C^{G,u}(u_t)z\|_{\Gamma}^2 - 2\|C(u_t)z\|_{C_0}^2.
\end{align*}

Since $G$ is locally Lipschitz, we obtain with Cauchy-Schwarz
\begin{align*}
\|C^{G,G}(u_t)\|_{\mathrm{HS}} = \Tr(C^{G,G}(u_t)C^{G,G}(u_t))^{1/2} &\le \frac1J\sum_{j=1}^J\|G(u_t^{(j)})-\bar G\|^2 \\ &\le c_{\mathrm{lip}}^2 V_e(t)\le c_{\mathrm{lip}}^2V_e(0)
\end{align*}
and therefore it follows with $\|C^{G,u}(u_t)z\|^2 \le \|C^{G,G}\|_{\mathrm{HS}} \|C(u_t)z\|_{\mathrm{HS}}$ that
\begin{equation*}
\|C^{G,u}(u_t)z\|_{\Gamma}^2\le c_{\mathrm{lip}}^2\lambda_{\max}V_e(0) \|C(u_t) z \|_{C_0}^2.
\end{equation*}
Hence, we have derived
\begin{equation*}
\frac{{\mathrm d} \langle z,C(u_t)z\rangle }{{\mathrm d}t} \ge -2(c_{\mathrm{lip}}^2\lambda_{\max}V_e(0)+\sigma_{\max})\|C(u_t)z\|^2.
\end{equation*}
Next, we consider $C(u_t)$ as operator acting on {$\mathcal B$} and denote its smallest eigenvalue $\zeta_t\ge0$ with unit-norm eigenvector $\varphi(t)\in {\mathcal B}$.  In the following we will prove that $\zeta_t$ is indeed strictly positive.
Since $\|\varphi(t)\|=1$ for all $t\ge0$, we observe that
\begin{equation*}
0 = \frac{{\mathrm{d}}\|\varphi(t)\|^2}{{\mathrm d}t} = 2\langle \varphi(t),\frac{{\mathrm d}\varphi(t)}{{\mathrm d}t}\rangle.
\end{equation*}
Hence, we can describe the time-evolution of the smallest eigenvalue $\zeta_t$ of $C(u_t)$ (initialized with $\zeta_0>0$) through
\begin{align*}
\frac{{\mathrm d}}{{\mathrm d}t}\zeta_t = \frac{{\mathrm d}}{{\mathrm d}t}\langle \varphi(t),C(u_t) \varphi(t)\rangle &= \langle \varphi(t), \frac{{\mathrm d}C(u_t)}{{\mathrm d}t}\varphi(t)\rangle + 2\langle \frac{{\mathrm d}\varphi(t)}{{\mathrm d}t}, C(u_t)\varphi(t)\rangle\\
& = \langle \varphi(t), \frac{{\mathrm d}C(u_t)}{{\mathrm d}t} \varphi(t)\rangle + 2\zeta_t\langle \frac{{\mathrm d}\varphi(t)}{{\mathrm d}t}, \varphi(t)\rangle\\
&= \langle \varphi(t), \frac{{\mathrm d}C(u_t)}{{\mathrm d}t} \varphi(t)\rangle\\
&\ge -2 (c_{\mathrm{lip}}^2\lambda_{\max}V_e(0)+\sigma_{\max})\|C(u_t)\varphi(t)\|^2\\
&\ge -2(c_{\mathrm{lip}}^2\lambda_{\max}V_e(0)+\sigma_{\max})\zeta_t^2.
\end{align*}
With the above computation we obtain the lower bound on the smallest eigenvalue of $C(u_t)$ as operator acting on {$\mathcal B$} by
\begin{equation*}
\zeta_t \ge \frac{1}{2(c_{\mathrm{lip}}^2\lambda_{\max}V_e(0)+\sigma_{\max}) t+ \zeta_0^{-1}}>0,
\end{equation*}
and the assertion follows with 
\begin{equation}\label{eq:const_m}
m:=2 (c_{\mathrm{lip}}^2\lambda_{\max}V_e(0)+\sigma_{\max})>0.
\end{equation}
\end{proof}

\subsection{Gradient approximation via Taylor expansion}
In the following we will quantify how the TEKI flow \eqref{eq:cont_deterministicTEKI} approximates gradients and hence can be viewed as gradient flow. 
\begin{lemma}\label{lem:grad_approx}
Suppose that Assumption~\ref{ass:Taylor} is satisfied. Given a particle system $(u^{(j)})_{j=1,\dots,J}$ it holds true that 
\[ \|C^{u,G}(u)\Gamma^{-1}(G(u^{(j)})-y) - C(u) \nabla\Phi(u^{(j)})\| \le b_1 J \sqrt{\Phi(u^{(j)})}
\left(\frac1J\sum_{k=1}^J\|u^{(k)}-\bar u\|^2\right)^{3/2},\]
for some $b_1$ independent of $J$.
\end{lemma}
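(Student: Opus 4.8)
The plan is to reduce the whole statement to controlling a single residual operator applied to a fixed vector. First I would rewrite the gradient of the data misfit by the chain rule as $\nabla\Phi(u^{(j)}) = {\mathrm D}G(u^{(j)})^\ast\Gamma^{-1}(G(u^{(j)})-y)$, so that both terms in the claimed difference act on the \emph{same} vector $v := \Gamma^{-1}(G(u^{(j)})-y)$. The claim then becomes a bound on $\|\mathcal E\, v\|$, where $\mathcal E := C^{u,G}(u) - C(u){\mathrm D}G(u^{(j)})^\ast$ collects the mismatch between the sample cross-covariance and its linearization.

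The key structural step is to Taylor expand each $G(u^{(k)})$, and likewise $\bar G$, about the fixed particle $u^{(j)}$ using Assumption~\ref{ass:Taylor}. Writing $R_k := {\mathrm{Res}}(u^{(k)},u^{(j)})$ and $\bar R := \frac1J\sum_{l=1}^J R_l$, the constant and linear parts telescope, giving
\[ G(u^{(k)}) - \bar G = {\mathrm D}G(u^{(j)})(u^{(k)}-\bar u) + (R_k - \bar R). \]
Substituting this into $C^{u,G}(u)=\frac1J\sum_k (u^{(k)}-\bar u)\otimes(G(u^{(k)})-\bar G)$ and recognizing $C(u)$ in the linear contribution yields the exact identity $\mathcal E = \frac1J\sum_{k=1}^J (u^{(k)}-\bar u)\otimes(R_k-\bar R)$, so the error operator is driven \emph{entirely} by the Taylor remainders and carries no first-order term.

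Next I would estimate $\|\mathcal E\, v\|$. The triangle inequality followed by Cauchy--Schwarz over the $J$ particles splits the bound into $V_e^{1/2}$, with $V_e=\frac1J\sum_k\|u^{(k)}-\bar u\|^2$, times $\bigl(\frac1J\sum_k\|R_k-\bar R\|^2\bigr)^{1/2}$; here I use the variance inequality $\frac1J\sum_k\|R_k-\bar R\|^2\le\frac1J\sum_k\|R_k\|^2$. The remainder estimate \eqref{eq:Taylor_residual} gives $\|R_k\|^2\le b_{\mathrm{res}}^2\|u^{(k)}-u^{(j)}\|^4$, and expanding $\|u^{(k)}-u^{(j)}\|^4$ by the triangle inequality around $\bar u$ reduces matters to the average $\frac1J\sum_k\|u^{(k)}-\bar u\|^4$ and the single term $\|u^{(j)}-\bar u\|^4$. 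Finally I would pass from $\|v\|=\|\Gamma^{-1}(G(u^{(j)})-y)\|$ to $\sqrt{2\lambda_{\max}}\,\sqrt{\Phi(u^{(j)})}$ directly from the definition \eqref{eq:datamisfit} of $\Phi$, and absorb all numerical constants and $\lambda_{\max}$ into $b_1$.

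The main obstacle is tracking the factor $J$ faithfully. Because $u^{(j)}$ is a \emph{fixed} particle rather than an averaged quantity, the deviation $\|u^{(j)}-\bar u\|^2$ cannot be replaced by $V_e$ and is only controlled by the crude bound $\|u^{(j)}-\bar u\|^2\le\sum_k\|u^{(k)}-\bar u\|^2 = J\,V_e$; similarly $\frac1J\sum_k\|u^{(k)}-\bar u\|^4\le (\max_k\|u^{(k)}-\bar u\|^2)\,V_e\le J\,V_e^2$. These two crude bounds are exactly where the explicit $J$ in the statement originates. The delicate bookkeeping is to verify that, after the Cauchy--Schwarz split contributes $V_e^{1/2}$ and the remainder factor contributes $\mathcal O(J V_e)$, the homogeneity collapses cleanly to the stated order $J\,V_e^{3/2}$ rather than a larger power of $J$ or $V_e$.
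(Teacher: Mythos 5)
Your proposal is correct and follows essentially the same route as the paper: Taylor-expanding $G(u^{(k)})$ and $\bar G$ about the fixed particle $u^{(j)})$ so that the constant and linear parts telescope, leaving exactly the residual term $\frac1J\sum_k (u^{(k)}-\bar u)\otimes(\mathrm{Res}(u^{(k)},u^{(j)})-\overline{\mathrm{Res}}^{(j)})$, and then extracting the factor $J$ from the crude bound $\|u^{(j)}-\bar u\|^2\le J V_e$. Your operator-level packaging of the error as $\mathcal E=C^{u,G}(u)-C(u){\mathrm D}G(u^{(j)})^\ast$ and the Cauchy--Schwarz split into $V_e^{1/2}$ times the root-mean-square of the residuals are only cosmetic variations on the paper's term-by-term estimate.
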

\begin{proof}
We apply \eqref{eq:linearization} and using bi-linearity of the inner product we obtain
\begin{align*}
 &C^{u,G}(u)\Gamma^{-1}(G(u^{(j)})-y) = \frac{1}{J}\sum_{k=1}^J \langle G(u^{(k)})-\bar G, G(u^{(j)})-y\rangle_\Gamma (u^{k}-\bar u)\\
 &= \frac{1}J \sum_{k=1}^J \langle G(u^{(j)}) + {\mathrm D}G(u^{(j)})(u^{(k)}- u^{(j)}) + {\mathrm{Res}}(u^{(k)},u^{(j)}),G(u^{(j)})-y\rangle_{\Gamma}(u^{(k)}-\bar u) \\ &\quad - \frac1J\sum_{l=1}^J \langle G(u^{(j)}) + {\mathrm D}G(u^{(j)}) (u^{(l)}-u^{(j)}) + {\mathrm{Res}}(u^{(l)},u^{(j)}) ,G(u^{(j)})-y\rangle_{\Gamma}(u^{(k)}-\bar u)\\
 &= \frac{1}{J}\sum_{k=1}^J \langle {\mathrm D}G(u^{(j)})(u^{(k)}-\bar u),G(u^{(j)})-y\rangle_\Gamma(u^{(k)}-\bar u)\\  &\quad + \frac{1}{J}\sum_{k=1}^J \langle {\mathrm{Res}}(u^{(k)},u^{(j)})-\overline{\mathrm{Res}}^{(j)},G(u^{(j)})-y\rangle_{\Gamma}(u^{(k)}-\bar u)\\
 &= C(u)\nabla\Phi(u^{(j)}) + \frac{1}{J}\sum_{k=1}^J \langle {\mathrm{Res}}(u^{(k)},u^{(j)})-\overline{\mathrm{Res}}^{(j)},G(u^{(j)})-y\rangle_\Gamma(u^{(k)}-\bar u),
\end{align*}
where we have defined
\[\overline{\mathrm{Res}}^{(j)} = \frac{1}{J}\sum_{l=1}^J{\mathrm{Res}}(u^{(l)},u^{(j)}).\]
It follows with Assumption~\ref{ass:Taylor} and Cauchy-Schwarz that
\begin{align*}
 &\|C^{u,G}(u)\Gamma^{-1}(G(u^{(j)})-y)-C(u)\nabla\Phi(u^{(j)}) \|\\  &\le b_{\mathrm{res}}\|\Gamma^{-1/2}\| \frac{1}{J}\sum_{k=1}^J 4\left(\|u^{(k)}-u^{(j)}\|^2+\frac1J\sum_{l=1}^J \|u^{(l)}-u^{(j)}\|^2\right)\|u^{(k)}-\bar u\|\|G(u^{(j)})-y\|_{\Gamma}\\
 &\le b_{\mathrm{res}}\|\Gamma^{-1/2}\| \frac{1}{J}\sum_{k=1}^J 4\left((1+\frac1J)\sum_{l=1}^J \|u^{(l)}-u^{(j)}\|^2\right)\|u^{(k)}-\bar u\|\|G(u^{(j)})-y\|_{\Gamma}\\
 &\le b_{\mathrm{res}}\|\Gamma^{-1/2}\| (8+\frac{2}{J})J\|G(u^{(j)})-y\|_\Gamma\left(\frac1J\sum_{k=1}^J\|u^{(k)}-\bar u\|^2\right)^{3/2},
\end{align*}
where we have used that
\[\|u^{(l)}-u^{(j)}\|^2 \le 2 \|u^{(l)}-\bar u\|^2 + 2 \|u^{(j)}-\bar u\|^2. \]

\end{proof}

We are now ready to prove Proposition~\ref{prop:gradientflow_structure} in order to quantify the gradient flow structure of TEKI.

\begin{proof}[Proof of Proposition~\ref{prop:gradientflow_structure}]
We apply chain rule for deriving the time-evolution of $V_{\Phi_R}(t) := \frac1J \sum_{j=1}^J\Phi_R(u_t^{(j)})$ as
\begin{align*}
\frac{{\mathrm d}V_{\Phi_R}(t)}{{\mathrm d}t} &= \frac{{\mathrm d}V_{\Phi_R}(t)}{{\mathrm d}t} = \frac1J\sum_{j=1}^J \langle \nabla\Phi_R(u_t^{(j)}),\frac{{\mathrm d}u_t^{(j)}}{{\mathrm d}t}\rangle\\
&= -\frac{1}{J}\sum_{j=1}^J \langle \nabla \Phi_R(u_t^{(j)}), C(u_t)\nabla\Phi_R(u_t^{(j)})\rangle \\
&\quad + \frac{1}{J}\sum_{j=1}^J \langle  \nabla\Phi_R(u_t^{(j)}), C^{u,G}(u_t)\Gamma^{-1}(G(u_t^{(j)})-y)-C(u_t)\nabla \Phi(u_t^{(j)})\rangle.
\end{align*}
We apply Cauchy-Schwarz inequality and Lemma~\ref{lem:grad_approx} such that
\begin{align*}
\frac{{\mathrm d}V_{\Phi_R}(t)}{{\mathrm d}t} &\le  -\frac{1}{J}\sum_{j=1}^J \langle \nabla \Phi_R(u_t^{(j)}), C(u_t)\nabla\Phi_R(u_t^{(j)})\rangle\\
&\quad + b_1 J V_e(t)^{3/2}\frac{1}{J}\sum_{j=1}^J\|\nabla\Phi_R(u_t^{(j)})\|\sqrt{\Phi(u_t^{(j)})}\\ 
&\le  -\frac{1}{J}\sum_{j=1}^J \langle \nabla \Phi_R(u_t^{(j)}), C(u_t)\nabla\Phi_R(u_t^{(j)})\rangle\\
&\quad + b_1 J V_e(t)^{3/2}\frac{1}{J}\sum_{j=1}^J\left(\frac{1}{2}\|\nabla\Phi_R(u_t^{(j)})\|^2+ \Phi(u_t^{(j)}) \right).
\end{align*}
The claim follows with $\Phi(x)\le \Phi_R(x)$.
\end{proof}

{Considering the upper bound on the derived gradient approximation in Lemma~\ref{lem:grad_approx}, we observe that it is increasing in the ensemble size $J$. Although we do not know whether this bound is sharp, it suggests that the gradient approximation might be less well-behaved in the large ensemble size regime.  To alleviate this issue, we expect improvement when considering localised preconditioning covariance matrices $C^{u,G}(u^{(j)})$ for each particle $u^{(j)}$. For the localised covariance we include more weights on particles in the neighborhood around $u^{(j)}$ and less weight for particles far away. This type of localisation has been considered for the ensemble Kalman sampler, where we do not expect the particle system to collapse \cite{RW2021}. The convergence analysis of (T)EKI under localisation is work in progress.}

\subsection{TEKI as constrained optimization method}

Before proving our main convergence result, we need to quantify the behavior of the preconditioning effect through the sample covariance, i.e.~what does it mean to consider the direction $C(u_t)\nabla\Phi_R(u_t^{(j)})$ instead of the steepest descent direction given through $\nabla\Phi_R(u_t^{(j)})$. We first note that once $C(u_t)$ can be identified as strictly positive definite operator on the whole parameter space $\mathcal X$, the scheme  can be viewed as global optimization method. 
However, in general it is not true that $C(u_t)$ describes a strictly positive definite operator on the parameter space $\mathcal X$ and indeed, $C(u_t)$ has at most rank $\min(n_x,J-1)$ and hence is positive semi-definite in case $J-1<n_x$. While $C(u_t)$ is only positive semi-definite on the whole space $\mathcal X$, we have verified in Lemma~\ref{lem:cov_subspace} that $C(u_t)$ is a strictly positive definite operator acting on the subspace {$\mathcal B$} spanned through the initial ensemble. We will make use of this property in order to quantify the TEKI flow \eqref{eq:cont_deterministicTEKI} as optimizer for the constrained optimization problem
{
\begin{equation*}
\min_{x\in \mathcal X}\ \Phi_R(x),\quad x\in\mathcal B= u_0^\perp + \spn\{e_0^{(j)} \mid j=1,\dots,J\}. 
\end{equation*}
}
Since we have assumed the finite dimensional setting $\mathcal X=\mathbb R^{n_x}$, we reformulate this problem as equality constrained optimization problem
\begin{equation}\label{eq:constr_opti2}
\min_{x\in \mathbb R^{n_x}}\ \Phi_R(x),\quad \text{s.t.}\quad h_i(x) = \langle q^{(i)},x\rangle = 0,\ i=J+1,\dots, n_x,
\end{equation}
where {$q^{(i)}\perp e_0^{(j)}$ and $q^{(i)}\perp u_0^\perp$} for all $j=1,\dots,J$ and $i=J+1,\dots,n_x$.  Since under Assumption~\ref{ass:smoothness} $\Phi_R$ is assumed to be strongly convex, there exists a unique global solution $u_\ast$ of \eqref{eq:constr_opti2} \cite[Proposition~2.1.1]{DPB15}.  By the Lagrange multiplier theorem \cite[Proposition~3.1.1.]{DPB15} there exist $\kappa_{J+1}^\ast,\dots,\kappa_{n_x}^\ast\in\mathbb R$
\begin{equation}\label{eq:nec_conditions}
\nabla\Phi_R(u_\ast) + \sum_{i=J+1}^{n_x}\kappa_i \nabla h_i(x_\ast) = 0.
\end{equation}
We will make use of this necessary condition in order to derive the following constrained PL-type inequality.
\begin{lemma}\label{lem:constr_PL}
Suppose Assumption~\ref{ass:smoothness} is satisfied and let $x_\ast$ be the unique global minimizer of \eqref{eq:constr_opti2}. Then there exists $\nu>0$ such that for all $x\in{\mathcal B}$ it holds true that
\begin{equation*}
\nu\|\nabla \Phi_R(x)\|^2\ge \Phi_R(x)-\Phi_R(x_\ast).
\end{equation*}
Moreover, let {$\mathcal P_{\mathcal B}$} be the orthogonal projection onto {$\mathcal B$}. Then it holds true that
\begin{equation*}
{\nu\|\mathcal P_{\mathcal B}\nabla \Phi_R(\mathcal P_{\mathcal B} x)\|^2\ge \Phi_R(\mathcal P_{\mathcal B} x) - \Phi_R(x_\ast)}
\end{equation*}
for all $x\in\mathbb R^{n_x}$.
\end{lemma}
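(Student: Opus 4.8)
The plan is to reduce both assertions to a single \emph{projected} PL-type inequality, valid for every $x\in\mathcal B$,
\[
\tfrac{1}{2\mu}\,\|\mathcal P_{\mathcal B}\nabla\Phi_R(x)\|^2 \;\ge\; \Phi_R(x)-\Phi_R(x_\ast),
\]
and then to read off the lemma from it. This is the constrained analogue of the classical implication ``$\mu$-strong convexity $\Rightarrow$ PL'', the only modification being that the competitor point in the strong-convexity estimate is restricted to the feasible subspace $\mathcal B$ of \eqref{eq:constr_opti2}; it is precisely this restriction that turns the full gradient into its projection $\mathcal P_{\mathcal B}\nabla\Phi_R$. Given this inequality, the first assertion is immediate from $\|\mathcal P_{\mathcal B}\nabla\Phi_R(x)\|\le\|\nabla\Phi_R(x)\|$, and the second follows by applying it to the feasible point $\mathcal P_{\mathcal B}x\in\mathcal B$ for arbitrary $x\in\mathbb R^{n_x}$.

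First I would record two structural facts. Strong convexity from Assumption~\ref{ass:smoothness} together with the fact that $\mathcal B$ is a closed convex subspace guarantees that the constrained minimizer $x_\ast$ exists and is unique, so that $\min_{y\in\mathcal B}\Phi_R(y)=\Phi_R(x_\ast)$. Moreover, the Lagrange condition \eqref{eq:nec_conditions} gives $\nabla\Phi_R(x_\ast)=-\sum_i\kappa_i q^{(i)}\in\spn\{q^{(i)}\}=\mathcal B^\perp$, i.e.\ $\mathcal P_{\mathcal B}\nabla\Phi_R(x_\ast)=0$, which identifies $x_\ast$ as a genuine critical point of $\Phi_R$ restricted to $\mathcal B$. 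The second fact is that for $x,y\in\mathcal B$ the difference $y-x$ lies in $\mathcal B$, so that $\langle\nabla\Phi_R(x),y-x\rangle=\langle\mathcal P_{\mathcal B}\nabla\Phi_R(x),y-x\rangle$; this is the step that inserts the projection.

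I would then apply strong convexity between a fixed $x\in\mathcal B$ and an arbitrary $y\in\mathcal B$, use the previous identity to replace $\nabla\Phi_R(x)$ by $\mathcal P_{\mathcal B}\nabla\Phi_R(x)$, and minimize the resulting quadratic lower bound over $y\in\mathcal B$. Writing $v=y-x$, which ranges over $\mathcal B$, the minimizer of $\langle\mathcal P_{\mathcal B}\nabla\Phi_R(x),v\rangle+\tfrac{\mu}{2}\|v\|^2$ is $v=-\tfrac1\mu\mathcal P_{\mathcal B}\nabla\Phi_R(x)$, which itself lies in $\mathcal B$ and is hence admissible, with minimal value $\Phi_R(x)-\tfrac{1}{2\mu}\|\mathcal P_{\mathcal B}\nabla\Phi_R(x)\|^2$. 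Combining with $\min_{y\in\mathcal B}\Phi_R(y)=\Phi_R(x_\ast)$ yields the projected inequality with $\nu=\tfrac1{2\mu}$.

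The argument is largely routine, and the single point that requires care -- the step I expect to be the crux -- is the bookkeeping of the projection. One must keep the minimization genuinely inside $\mathcal B$: it is exactly the restriction to the feasible subspace that pairs the \emph{projected} gradient on the left with the \emph{constrained} optimum $\Phi_R(x_\ast)$ on the right, and the admissibility of the optimal direction $-\tfrac1\mu\mathcal P_{\mathcal B}\nabla\Phi_R(x)\in\mathcal B$ is what makes the computed minimum attainable within the feasible set. This projected form, rather than the weaker full-gradient statement, is the version needed later, since the preconditioner $C(u_t)$ is by Lemma~\ref{lem:cov_subspace} positive definite only on $\mathcal B$ and therefore only controls the $\mathcal B$-component of $\nabla\Phi_R$.
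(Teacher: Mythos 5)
Your proof is correct, but it takes a genuinely different route from the paper's. The paper argues in two steps: it uses the Lagrange condition \eqref{eq:nec_conditions} together with $L$-smoothness to obtain $\Phi_R(x)-\Phi_R(x_\ast)\le \frac{L}{2}\|x-x_\ast\|^2$ for feasible $x$ (the multiplier terms $\kappa_i\langle q^{(i)},x-x_\ast\rangle$ vanish by feasibility), and then combines $\mu$-strong convexity with Cauchy--Schwarz to bound $\|x-x_\ast\|\le\frac{2}{\mu}\|\nabla\Phi_R(x)\|$, yielding a PL constant of order $L/\mu^2$; the projected version is handled by a separate strong-convexity estimate at $\mathcal P_{\mathcal B}x$. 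You instead prove the projected inequality directly by minimizing the strong-convexity lower bound $\langle\mathcal P_{\mathcal B}\nabla\Phi_R(x),v\rangle+\frac{\mu}{2}\|v\|^2$ over feasible directions $v$, obtaining $\nu=\frac{1}{2\mu}$ in one stroke and then reading off both assertions via $\|\mathcal P_{\mathcal B}g\|\le\|g\|$ and evaluation at $\mathcal P_{\mathcal B}x$. This buys you two things: you never need $L$-smoothness or the Lagrange multipliers (your use of \eqref{eq:nec_conditions} is only a consistency check), and you obtain the \emph{projected}-gradient form as the primary statement, which is precisely the version invoked in the proof of Theorem~\ref{thm:main}. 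The one point to keep straight --- shared with the paper --- is that the argument really lives on the linear feasible set $\{x:\langle q^{(i)},x\rangle=0\}$ of \eqref{eq:constr_opti2} rather than on the affine set $u_0^\perp+\spn\{e_0^{(j)}\}$; your steps ``$y-x\in\mathcal B$'' and the admissibility of $v=-\frac1\mu\mathcal P_{\mathcal B}\nabla\Phi_R(x)$ are valid under that reading, which is the one the paper adopts.
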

\begin{proof}
Consider some arbitrary {$x\in \mathcal B$}. By $L$-smoothness and the necessary optimality condition \eqref{eq:nec_conditions} we have that
\begin{align*}
\Phi_R(x)&\le \Phi_R(x_\ast) + \langle \nabla\Phi_R(x_\ast),x-x_\ast\rangle + \frac{L}{2}\|x-x_\ast\|^2\\
& = \Phi_R(x_\ast) - \sum_{i=J+1}^{n_x} \kappa_i \langle \nabla h_i(x_\ast),x-x_\ast\rangle + \frac{L}{2}\|x-x_\ast\|^2\\
& = \Phi_R(x_\ast) - \sum_{i=J+1}^{n_x} \kappa_i \langle q^{(i)},x-x_\ast\rangle + \frac{L}{2}\|x-x_\ast\|^2\\
& = \Phi_R(x_\ast) + \frac{L}{2}\|x-x_\ast\|^2,
\end{align*}
where we have used $x,x_\ast\perp q^{(i)}$ for all $i=J+1,\dots,n_x$.  
On the other side with $\mu$-strong convexity and Cauchy-Schwarz we obtain
\[\frac{\mu}{2} \|x_\ast-x\| \le \|\nabla \Phi(x)\|\]
leading to
\[\Phi_R(x)-\Phi_R(x_\ast) \le \frac{L}{2} \|x-x_\ast\|^2 \le \frac{L}{\mu}\|\nabla\Phi_R(x)\|^2.\]
For the second claim, consider $x\in\mathbb R^{n_x}$. By $\mu$-strong convexity it follows
\[\langle {\mathcal P_{\mathcal B}} \nabla \Phi_R(x), {\mathcal P_{\mathcal B}}(x-x_\ast)\rangle = \langle \nabla \Phi_R(x),  {\mathcal P_{\mathcal B}}x-x_\ast)\rangle \ge \Phi_R({\mathcal P_{\mathcal B}} x)-\Phi_R(x_\ast)+\frac{\mu}{2} \|{\mathcal P_{\mathcal B}} x-x_\ast\|^2. \]
The assertion follows again by applying Cauchy-Schwarz inequality and $L$-smoothness.
\end{proof}

\section{Proof of Theorem~\ref{thm:main}}\label{sec:proof_main}

We are now ready to present the proof of our main convergence result.

\begin{proof}[Proof of Theorem~\ref{thm:main}]
Let 
$\mathcal B= u_0^\perp + \spn\{e_0^{(j)} \mid j=1,\dots,J\}$
and let {$\mathcal P_{\mathcal B}$} be the orthogonal projection onto {$\mathcal B$}.  Recall that from Proposition~\ref{prop:gradientflow_structure} we have that
\begin{align*}
 \frac{{\mathrm d}\frac1J\sum_{j=1}^J\left(\Phi_R(u_t^{(j)})\right)}{{\mathrm d}t} &\le -\frac1J\sum_{j=1}^J \langle \nabla \Phi_R(u_t^{(j)}), C(u_t) \nabla \Phi_R(u_t^{(j)})\rangle\\ &\quad+  b_1 J V_e(t)^{3/2}\frac{1}{J}\sum_{j=1}^J\left(\frac{1}{2}\|\nabla\Phi_R(u_t^{(j)})\|^2+ \Phi_R(u_t^{(j)}) \right)\\
&\le -\frac1J\sum_{j=1}^J \langle \nabla \Phi_R(u_t^{(j)}), C(u_t)\nabla \Phi_R(u_t^{(j)})\rangle\\ &\quad+ \frac32 b_1 J \left(\frac{1}{\frac{2\sigma_{\min}}{J}t+V_e(0)^{-1}}\right)^{3/2}\frac{1}{J}\sum_{j=1}^J(\Phi_R(u_t^{(j)})-\Phi_R(u_\ast)) \\
&\quad+ \tilde b_1 J \left(\frac{1}{\frac{2\sigma_{\min}}{J}t+V_e(0)^{-1}}\right)^{3/2}\Phi_R(u_\ast)),
\end{align*}
where we have used strong convexity.  Due to the subspace space property in Proposition~\ref{prop:subspace_prop} we obtain $C(u_t) =  \mathcal P_{\mathcal B}C(u_t) \mathcal P_{\mathcal B}$ and with Lemma~\ref{lem:cov_subspace} it holds true that
\begin{align*}
 \langle \nabla \Phi_R(u_t^{(j)}), C(u_t) \nabla \Phi_R(u_t^{(j)})\rangle &=  \langle {\mathcal P_{\mathcal B}}\nabla \Phi_R(u_t^{(j)}), C(u_t) {\mathcal P_{\mathcal B}}\nabla \Phi_R(u_t^{(j)})\rangle\\ &\ge  \frac{1}{mt+\zeta_0^{-1}}\|{\mathcal P_{\mathcal B}}\nabla\Phi_R(u_t^{(j)}) \|^2,
 \end{align*}
where $m>0$ is defined in \eqref{eq:const_m} and $\zeta_0$ is defined in \eqref{eq:smallest_eigenvalue}.  Hence,  we define $V_{\Phi_R}(t):=\frac{1}{J}\sum_{j=1}^J \Phi_R(u_t^{(j)})-\Phi_R(u_\ast) $ and apply Lemma~\ref{lem:constr_PL} in order to obtain
\begin{align*}
\frac{{\mathrm d}V_{\Phi_R}(t)}{{\mathrm d}t} = \frac{{\mathrm d}\frac1J\sum_{j=1}^J\left(\Phi_R(u_t^{(j)})\right)}{{\mathrm d}t} 
&\le - \frac{1}{\frac{L}{\mu}mt+\zeta_0^{-1}}V_{\Phi_R}(t)\\
&\quad+ \tilde b_1 J \left(\frac{1}{\frac{2\sigma_{\min}}{J}t+V_e(0)^{-1}}\right)^{3/2} V_{\Phi_R}(t) \\
&\quad+ \tilde b_1 J \left(\frac{1}{\frac{2\sigma_{\min}}{J}t+V_e(0)^{-1}}\right)^{3/2}\Phi_R(u_\ast).
\end{align*}
It follows with $\alpha = \frac{L}{\mu}m>0$ that 
\begin{align*}
\frac{{\mathrm d}V(t)}{{\mathrm d}t}\le -\frac{1}{\alpha t+a} V(t) + \left(\frac{z_1}{z_2+z_3t} \right)^{3/2}(V(t)+c)
\end{align*}
for some constants $a,z_1,z_2,z_3,c>0$. By Gronwall's inequality we obtain
\begin{align*}
V(t) &\le V(0) \left(\int_0^t \left(\frac{z_1}{z_2+z_3s} \right)^{3/2}\, {\mathrm d}s\right) \exp\left(\int_0^t \left(\frac{z_1}{z_2+z_3s} \right)^{3/2}\,{\mathrm d}s -\int_0^t \frac{1}{\alpha s+a}\,{\mathrm d}s\right)\\
 &\le \left(\frac{c_1}{t+c_2}\right)^{\frac1\alpha},
\end{align*}
for some constants $c_1,c_2>0$.
\end{proof}

\begin{remark}\label{rem:higher_apprx}
We note that under the relaxed assumption $\|{\mathrm{Res}}(x,y)\| \le b_{\mathrm{res}} \|x-y\|^p$,  for $p\ge 2$ such that $p/2\in\mathbb N$, we obtain a similar bound for the gradient approximation as in Lemma~\ref{lem:grad_approx} given by
\begin{align*}
&\|C^{u,G}(u)\Gamma^{-1}(G(u^{(j)})-y)-C(u)\nabla\Phi(u^{(j)}) \|\\ 
&\le b_{\mathrm{res}}\|\Gamma^{-1/2}\| (8+\frac{2}{J})J(4J)^{p/2}\|G(u^{(j)})-y\|_\Gamma\left(\frac1J\sum_{k=1}^J\|u^{(k)}-\bar u\|^2\right)^{1/2+p/2}.
\end{align*}
Here, we have applied Binomial theorem to imply the inequality 
\[\sum_{j=1}^{J}a_j^p\le \left(\sum_{j=1}^{J} a_j^2\right)^{p/2},\quad a_j\ge0.\]
Since $p\ge2$ the integral $\int_0^t \left(\frac{z_1}{z_2+z_3s} \right)^{1/2+p/2}\,{\mathrm d}s$ in the above proof remains finite and hence the asymptotic convergence result in Theorem~\ref{thm:main} still holds. 
\end{remark}

\section{Stabilization via covariance inflation: A generalized ensemble square root filter for inverse problems}\label{sec:ESRF}

The incorporation of covariance inflation is a data assimilation tool in order to stabilize convergence and often used in practice \cite{JLA09,JLA07,47c5c78ebb8c44ef9d8d5e0d23e23c13}. In the following we present a form of covariance inflation for the TEKI flow for which it turns out to be a generalized formulation of the ensemble square root filter (ESRF) \cite{RC14} applied to inverse problems.  

Through our incorporation of covariance inflation we obtain in approximation a weighted gradient flow structure between the current particle position and the position of the ensemble mean. Roughly speaking, the TEKI flow can then be viewed as approximation of the equation:
\begin{equation}\label{eq:approx_ESRF_flow}
\frac{\mathrm{d}u_t^{(j)}}{{\mathrm d}t} = - C(u_t) \left((1-\rho)\nabla \Phi_R(u_t^{(j)})+\rho\nabla \Phi_R(\bar u_t)\right),
\end{equation}
where $\rho\in[0,1)$ describes the weight of the gradients of the particles and the mean.  Based on \cite{bergemann2010mollified, bergemann2010localization} the continuous time formulation of the ESRF applied to inverse problems has been analysed in \cite{CSW19} for linear forward models and the special case $\rho=1/2$. 
In the following, we will incorporate the covariance inflation and theoretical quantify the approximate gradient flow structure. Finally, we will observe an acceleration in convergence towards the optimal solution restricted to the subspace spanned through the initial ensemble spread.

\begin{figure}[htb!]
\centering
\includegraphics[scale=0.4]{./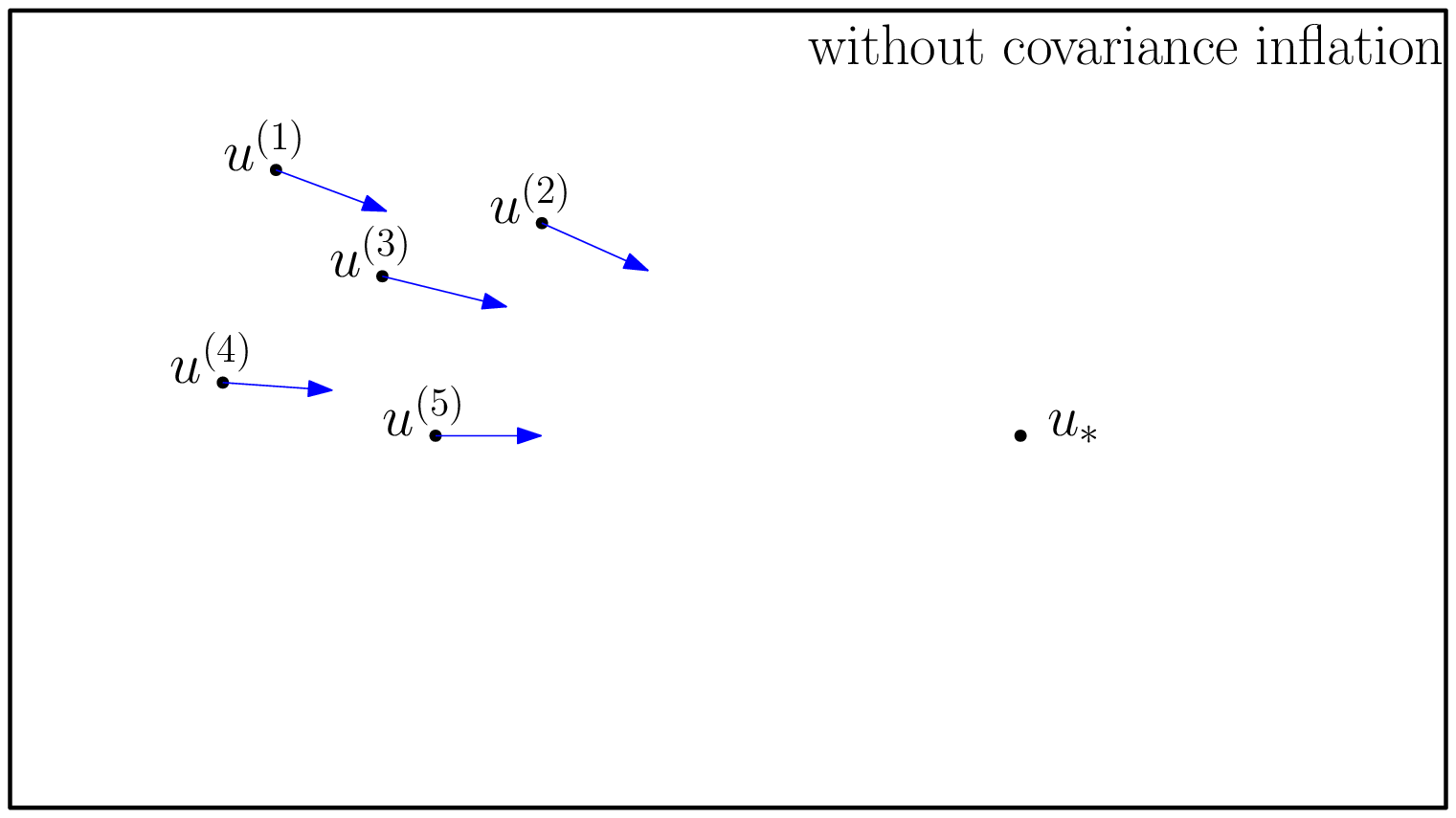}
\includegraphics[scale=0.4]{./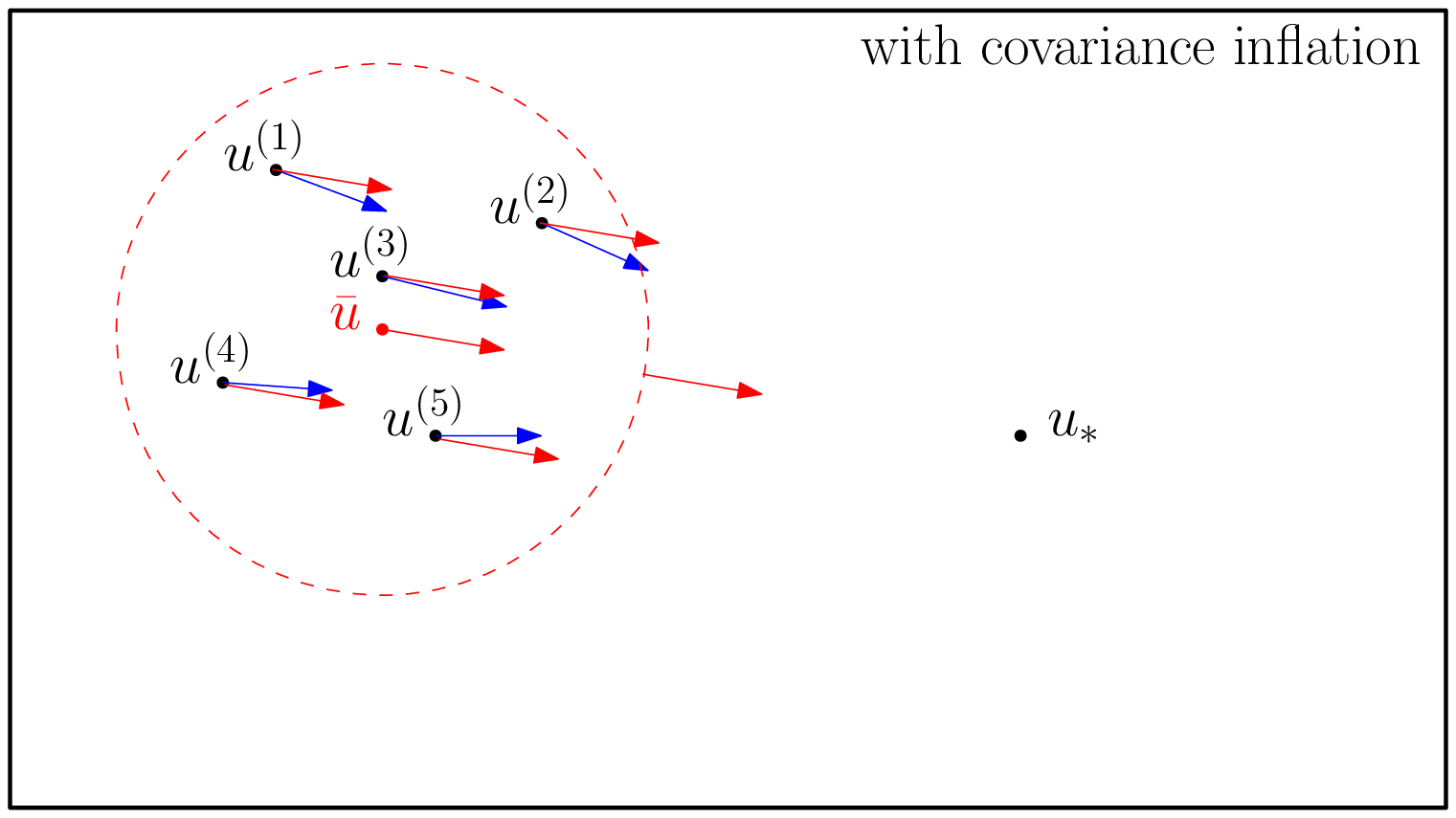}
\caption{Illustration of the effect through the incorporation of the covariance inflation. Each particle is getting moved into its own direction as well as into a joint direction.}
\label{fig:covariance_inflation}
\end{figure}

\subsection{Incorporation of covariance inflation}

The covariance inflation is incorporated through the following modified TEKI flow
\begin{equation}\label{eq:cov_inflation}
\begin{split}
\frac{{\mathrm d}u_t^{(j)}}{{\mathrm d}t} &= C^{u,G}(u_t)\Gamma^{-1}(y-G(u_t^{(j)}))-C(u_t)C_0^{-1}u_t^{(j)}\\  &\quad+ \rho C^{u,G}(u_t)\Gamma^{-1}(G(u_t^{(j)})-\bar G) +\rho C(u_t)C_0^{-1}(u_t^{(j)}-\bar u).
\end{split}
\end{equation}
The idea behind this incorporation comes from the fact that the inflating term does not directly change the dynamical behavior of the particles mean, since the evolution
\begin{equation}\label{eq:cov_inflation_mean}
\frac{{\mathrm d}\bar u_t}{{\mathrm d}t} = C^{u,G}(u_t)\Gamma^{-1}(y-\bar G_t)-C(u_t)C_0^{-1}\bar u_t
\end{equation}
coincides with the unmodified dynamic (i.e.~$\rho=0$).  However the inflating term decreases the force of collapsing the particle system and hence leads to a slower ensemble collapse without changing the collapse rate $1/t$ but decreasing the constant.  For the interpretation from the optimization point of view we rewrite \eqref{eq:cov_inflation} as
\begin{equation}\label{eq:cov_inflation2}
\begin{split}
\frac{{\mathrm d}u_t^{(j)}}{{\mathrm d}t} &= (1-\rho)\left\{C^{u,G}(u_t)\Gamma^{-1}(y-G(u_t^{(j)}))-C(u_t)C_0^{-1}u_t^{(j)}\right\}\\
&\quad +\rho\left\{C^{u,G}(u_t)\Gamma^{-1}(y-\bar G_t)-C(u_t)C_0^{-1}\bar u_t\right\}.
\end{split}
\end{equation}
motivating the modified TEKI flow as approximation of \eqref{eq:approx_ESRF_flow}.  The movement of the particle system through \eqref{eq:cov_inflation2} is described through two forces.  The first force is given by 
\[C^{u,G}(u_t)\Gamma^{-1}(y-G(u_t^{(j)}))-C(u_t)C_0^{-1}u_t^{(j)} \]
moving each particle in its own direction and leading to a collapse of the ensemble and hence to a better gradient approximation as we have already seen in Lemma~\ref{lem:grad_approx}. The second force 
\[C^{u,G}(u_t)\Gamma^{-1}(y-\bar G_t)-C(u_t)C_0^{-1}\bar u_t \]
moves the whole particle system into the same direction without changing the spread and hence does not improve the gradient approximation. However, it accelerates the overall convergence of the optimization process since it slows down the degeneration of the preconditioning through the sample covariance matrix.  We will start the theoretical analysis by considering the ensemble spread and its collapse behavior. 
{We first note, that unique existence of solutions of \eqref{eq:cov_inflation} can be verified and the subspace property Proposition~\ref{prop:subspace_prop} is obviously satisfied.
\begin{proposition}
Let $(u_0^{(j)})_{j=1,\dots,J}$ be the linearly independent initial ensemble and define the subspace
\[\cB = u_0^\perp + \spn\{e_0^{(j)} \mid j=1,\dots,J\},  \]
where $u_0^\perp = \bar u_0 - \mathcal P_E \bar u_0$,  where $\mathcal P_E= E(E^\top E)^{-1}E^T$ with $E=((e_0^{(1)})^\top,\dots,(e_0^{(J)})^\top)\in\mathbb R^{n_x\times J}$  is the orthogonal projection on the subspace $\spn\{e_0^{(j)} \mid j=1,\dots,J\}$. Then for all $T\ge 0$ and $\rho\in[0,T)$ the unique solution $(u_t^{(j)}, t\in[0,T])$  of the flow \eqref{eq:cov_inflation} exists and remains in $\mathcal B$, i.e.~
$u_t^{(j)} \in \mathcal B$ for all $t\in[0,T]$ and all $j=1,\dots,J$.
\end{proposition}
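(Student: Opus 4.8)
The plan is to follow the same three-step template already used for the unmodified TEKI flow: local existence and uniqueness from standard ODE theory, invariance of the affine subspace $\mathcal B$, and a Lyapunov estimate to extend solutions globally. First I would establish local existence and uniqueness. The right-hand side of \eqref{eq:cov_inflation} is a polynomial expression in the particle positions $u^{(1)},\dots,u^{(J)}$ and in the evaluations $G(u^{(1)}),\dots,G(u^{(J)})$, built from the empirical covariances $C^{u,G}(u)$ and $C(u)$. Since $G\in C^2(\mathcal X;\R^{n_y})$ is locally Lipschitz, the map $(u^{(1)},\dots,u^{(J)})\mapsto(\text{RHS})$ is locally Lipschitz on $(\R^{n_x})^J$, and the Picard--Lindel\"of theorem yields a unique local solution, exactly as in \cite[Theorem~3.2]{CST19}.

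Next, for the subspace property, I would argue by invariance. Writing $V:=\spn\{e_0^{(j)}\mid j=1,\dots,J\}$ for the direction space of $\mathcal B$, note that $u_0^\perp=(I-\mathcal P_E)\bar u_0\in V^\perp$ while $\mathcal P_E\bar u_0\in V$, so each $u_0^{(j)}=u_0^\perp+(\mathcal P_E\bar u_0+e_0^{(j)})$ lies in $\mathcal B=u_0^\perp+V$; hence the initial datum lies in the affine set $\mathcal M:=\mathcal B^J\subset(\R^{n_x})^J$. The decisive observation is that every one of the four terms on the right-hand side of \eqref{eq:cov_inflation}---including the two inflation terms $\rho C^{u,G}(u_t)\Gamma^{-1}(G(u_t^{(j)})-\bar G_t)$ and $\rho C(u_t)C_0^{-1}(u_t^{(j)}-\bar u_t)$---is obtained by applying $C^{u,G}(u_t)$ or $C(u_t)$ to some vector, and both operators have range contained in $\spn\{e_t^{(j)}\mid j=1,\dots,J\}$. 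As long as the particles lie in $\mathcal B$ one has $\bar u_t\in\mathcal B$ and hence $e_t^{(j)}=u_t^{(j)}-\bar u_t\in V$, so the velocity of each particle lies in $V$; equivalently the vector field is tangent to $\mathcal M$ along $\mathcal M$. The restricted ODE on $\mathcal M$ therefore admits a unique solution which, by uniqueness of the full ODE, coincides with it, so $u_t^{(j)}\in\mathcal B$ for all $t$. Since the inflation terms share the exact algebraic structure exploited in \cite[Corollary~3.8]{CST19}, this is verbatim the $\rho=0$ computation.

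Finally, to pass from local to global existence I would reuse the Lyapunov function $V_{\bar u}(u_t)=\varphi(\|\bar u_t\|^2)+\frac{B_1}{J}\sum_{j=1}^J\|u_t^{(j)}-\bar u_t\|^2+B_2$ with $\varphi(z)=\log(1+z)$. Two simplifications make this nearly automatic. The mean dynamics \eqref{eq:cov_inflation_mean} coincide with the $\rho=0$ case, so the estimate on $\tfrac{\mathrm d}{\mathrm dt}\varphi(\|\bar u_t\|^2)$ is unchanged. Moreover, subtracting \eqref{eq:cov_inflation_mean} from \eqref{eq:cov_inflation2} gives $\tfrac{\mathrm d e_t^{(j)}}{\mathrm dt}=(1-\rho)\bigl(F^{(j)}(u_t)-\bar F(u_t)\bigr)$, where $F^{(j)}$ and $\bar F$ denote the $\rho=0$ drift of the $j$-th particle and of the mean; hence the spread dissipation acquires an overall factor $(1-\rho)$, namely $\tfrac{\mathrm d}{\mathrm dt}\frac1J\sum_j\|e_t^{(j)}\|^2=-2(1-\rho)\bigl(\|C^{u,G}(u_t)\Gamma^{-1/2}\|_{\mathrm{HS}}^2+\|C(u_t)C_0^{-1/2}\|_{\mathrm{HS}}^2\bigr)$.

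I would then close the argument exactly as in the first theorem: the positive Hilbert--Schmidt contributions appearing in the bound for $\tfrac{\mathrm d}{\mathrm dt}\varphi(\|\bar u_t\|^2)$ are absorbed by choosing $B_1>\tfrac{1}{2(1-\rho)}$, which is possible precisely because $\rho\in[0,1)$, yielding $\tfrac{\mathrm d}{\mathrm dt}V_{\bar u}(u_t)\le cV_{\bar u}(u_t)$ and, via Gronwall's inequality, boundedness on every $[0,T]$ and thus global existence. I expect this last point---checking that weakening the collapse by the factor $(1-\rho)$ still leaves enough dissipation to absorb the Hilbert--Schmidt terms and close the Gronwall estimate---to be the only step requiring genuine (if mild) care; the subspace property and local existence are essentially repetitions of the $\rho=0$ arguments, which is why the result holds ``obviously.''
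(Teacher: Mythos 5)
Your proposal is correct and follows exactly the route the paper intends: the paper states this proposition without proof, remarking only that existence ``can be verified'' and that the subspace property is ``obviously satisfied'' by the same arguments as for $\rho=0$, and your three steps (Picard--Lindel\"of for local existence, tangency of the vector field to $\mathcal B^J$ via the range of $C(u_t)$ and $C^{u,G}(u_t)$, and the logarithmic Lyapunov function with the dissipation weakened by the factor $(1-\rho)$ absorbed by taking $B_1\ge\tfrac{1}{2(1-\rho)}$) are precisely the omitted details. Your identity $\tfrac{\mathrm d e_t^{(j)}}{\mathrm dt}=(1-\rho)\bigl(F^{(j)}(u_t)-\bar F(u_t)\bigr)$ is the clean way to see that every estimate from the $\rho=0$ case carries over with the expected constant.
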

}

The following result states, that the ensemble collapse occurs with reduced rate through the factor $1-\rho\in(0,1]$.
\begin{lemma}\label{lem:ensemble_collapse_ESRF}
Let $\rho\in[0,1)$ and $(u_t^{(j)},t\ge0)_{j=1,\dots,J}$ be the solution of \eqref{eq:cov_inflation} initialized by a linearly independent ensemble $(u_0^{(j)})_{j=1,\dots,J}$. Then the mapping $V_e:\mathbb R_+\to \mathbb R_+$ with $t\mapsto \frac{1}{J}\sum_{j=1}^J\|e_t\|^2$
is bounded by
\[V_e(t) \le \frac{1}{\frac{2(1-\rho)\sigma_{\min}}{J} t + V_e(0)^{-1}},\]
where $\sigma_{\min}$ denotes the smallest eigenvalue of $C_0^{-1}$.
\end{lemma}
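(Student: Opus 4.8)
The plan is to imitate the proof of Lemma~\ref{lem:ensemble_collapse}, exploiting the fact that the covariance inflation in \eqref{eq:cov_inflation} has been designed precisely so that it rescales the spread dynamics rather than altering their structure. First I would derive the evolution of the spread $e_t^{(j)} = u_t^{(j)}-\bar u_t$ by subtracting the mean evolution \eqref{eq:cov_inflation_mean}, which coincides with the uninflated dynamic, from the particle evolution \eqref{eq:cov_inflation}. Since $\frac1J\sum_{j=1}^J(G(u_t^{(j)})-\bar G_t)=0$ and $\frac1J\sum_{j=1}^J e_t^{(j)}=0$, both inflation terms have vanishing ensemble mean and hence contribute in full to each $\frac{{\mathrm d}e_t^{(j)}}{{\mathrm d}t}$.

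The key structural observation I would then record is that the inflated spread dynamics factor exactly as $(1-\rho)$ times the uninflated ones,
\begin{equation*}
\frac{{\mathrm d}e_t^{(j)}}{{\mathrm d}t} = (1-\rho)\left(-C^{u,G}(u_t)\Gamma^{-1}(G(u_t^{(j)})-\bar G_t) - C(u_t)C_0^{-1}e_t^{(j)}\right).
\end{equation*}
Indeed, the centered quantities $G(u_t^{(j)})-\bar G_t$ and $u_t^{(j)}-\bar u_t = e_t^{(j)}$ appearing in the inflation terms $\rho C^{u,G}(u_t)\Gamma^{-1}(G(u_t^{(j)})-\bar G_t)$ and $\rho C(u_t)C_0^{-1}e_t^{(j)}$ are precisely the terms driving the uninflated spread, so they combine with opposite sign into the common prefactor $(1-\rho)$. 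I regard verifying this algebraic factorization as the crux of the argument; everything downstream is then a rescaled copy of Lemma~\ref{lem:ensemble_collapse}.

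Given the factorization, I would compute $\frac{{\mathrm d}V_e(t)}{{\mathrm d}t} = \frac{2}{J}\sum_{j=1}^J\langle e_t^{(j)}, \frac{{\mathrm d}e_t^{(j)}}{{\mathrm d}t}\rangle$ and collect terms exactly as in the uninflated case to obtain
\begin{equation*}
\frac{{\mathrm d}V_e(t)}{{\mathrm d}t} = -(1-\rho)\left(2\|C^{u,G}(u_t)\Gamma^{-1/2}\|_{\mathrm{HS}}^2 + 2\|C(u_t)C_0^{-1/2}\|_{\mathrm{HS}}^2\right).
\end{equation*}
Dropping the nonnegative first term and using $C_0^{-1}\succeq\sigma_{\min}I$ together with the rank bound $\|C(u_t)\|_{\mathrm{HS}}^2 = \Tr(C(u_t)^2)\ge \frac1J(\Tr C(u_t))^2 = \frac1J V_e(t)^2$, which holds since $C(u_t)$ has rank at most $J$, I arrive at the Riccati-type differential inequality
\begin{equation*}
\frac{{\mathrm d}V_e(t)}{{\mathrm d}t} \le -\frac{2(1-\rho)\sigma_{\min}}{J}V_e(t)^2.
\end{equation*}

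Finally, the claimed bound follows from the same Lyapunov/comparison argument as in Lemma~\ref{lem:ensemble_collapse}: setting $y=V_e^{-1}$ converts the inequality into $\frac{{\mathrm d}y}{{\mathrm d}t}\ge \frac{2(1-\rho)\sigma_{\min}}{J}$, whence $V_e(t)^{-1}\ge V_e(0)^{-1} + \frac{2(1-\rho)\sigma_{\min}}{J}t$, which is exactly the asserted estimate. The only place where the inflation parameter enters is the factor $1-\rho$, which confirms that inflation slows the collapse while preserving the $1/t$ rate.
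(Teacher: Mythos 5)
Your proof is correct and follows essentially the same route as the paper: the paper's own (very terse) proof likewise observes that the spread evolution acquires exactly the prefactor $(1-\rho)$ relative to the uninflated dynamics, yielding $\frac{{\mathrm d}V_e(t)}{{\mathrm d}t}\le -\frac{2(1-\rho)\sigma_{\min}}{J}V_e(t)^2$, and concludes by the same Riccati/Lyapunov comparison as in Lemma~\ref{lem:ensemble_collapse}. Your write-up merely makes explicit two steps the paper leaves implicit, namely the algebraic factorization of $\frac{{\mathrm d}e_t^{(j)}}{{\mathrm d}t}$ and the trace inequality $\Tr(C^2)\ge \frac1J(\Tr C)^2$, both of which are verified correctly.
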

\begin{proof}
The evolution of $V_e(t)$ is given by
\begin{align*}
 \frac{{\mathrm d}\frac1J\sum_{j=1}^J \|u_t^{(j)}-\bar u_t\|^2}{{\mathrm d}t}
					&= -(1-\rho)2\|C^{u,G}(u_t)\Gamma^{-1/2}\|_{\mathrm{HS}}\\ &\quad-\frac{(1-\rho)2}{J^2} \sum_{j,k=1}^J \langle u_t^{(j)}-\bar u_t,u_t^{(k)}-\bar u_t\rangle \langle u_t^{(k)}-\bar u_t,C_0^{-1}(u_t^{(j)}-\bar u_t)\rangle\\
					&\le -\frac{2(1-\rho)\sigma_{\min}}{J} \left(\frac1J\sum_{j=1}^J \|u_t^{(j)}-\bar u_t\|^2\right)^2,
\end{align*}
and the assertion follows again by Lyapunov-type argument.
\end{proof}

Similarly, we can derive the lower bound of the ensemble collapse.
\begin{lemma}\label{lem:cov_subspace_ESRF}
Let $\rho\in[0,1)$ and $(u_t^{(j)},t\ge0)_{j=1,\dots,J}$ be the solution of \eqref{eq:cov_inflation} initialized by a linearly independent ensemble $(u_0^{(j)})_{j=1,\dots,J}$ such that
\begin{equation*}
\zeta_0 = \min_{z\in {\mathcal B},\ \|z\|=1}\ \langle z,C(u_0)z\rangle > 0.
\end{equation*}
 For each $z\in {\mathcal B}$ with $\|z\|=1$ it holds true that
\begin{equation*}
\langle z, C(u_t) z\rangle \ge \frac{1}{(1-\rho)m t+\zeta_0^{-1}},
\end{equation*}
where $m=2(c_{\mathrm{lip}}^2\lambda_{\max}V_e(0)+\sigma_{\max})>0$ depends on the smallest and largest eigenvalue $\sigma_{\max}$ and $\sigma_{\min}$ of $C_0^{-1}$,  the largest eigenvalue $\lambda_{\max}$ of $\Gamma^{-1}$ and the Lipschitz constant $c_{\mathrm{lip}}$ of $G$.
\end{lemma}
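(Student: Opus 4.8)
The plan is to follow the proof of Lemma~\ref{lem:cov_subspace} almost verbatim, the only new ingredient being that the covariance inflation rescales the rate of change of the sample covariance by the factor $1-\rho$. First I would compute the evolution of the spread $e_t^{(j)} = u_t^{(j)} - \bar u_t$ under the inflated flow \eqref{eq:cov_inflation}. Subtracting the mean dynamics \eqref{eq:cov_inflation_mean} from \eqref{eq:cov_inflation}, the terms driving each particle towards the data and the prior combine with their mean counterparts to produce $-C^{u,G}(u_t)\Gamma^{-1}(G(u_t^{(j)})-\bar G_t) - C(u_t)C_0^{-1}e_t^{(j)}$, while the two inflation terms proportional to $\rho$ survive the subtraction intact, since each already has vanishing empirical mean. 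This yields
\[
\frac{\mathrm d e_t^{(j)}}{\mathrm d t} = -(1-\rho)\left(C^{u,G}(u_t)\Gamma^{-1}(G(u_t^{(j)})-\bar G_t) + C(u_t)C_0^{-1}e_t^{(j)}\right),
\]
that is, exactly $(1-\rho)$ times the spread dynamics of the unmodified flow.

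From here I would differentiate $C(u_t) = \tfrac1J\sum_{j} e_t^{(j)}(e_t^{(j)})^\top$ and collect terms to obtain
\[
\frac{\mathrm d C(u_t)}{\mathrm d t} = -2(1-\rho)\left(C^{u,G}(u_t)\Gamma^{-1}C^{G,u}(u_t) + C(u_t)C_0^{-1}C(u_t)\right),
\]
which is precisely $(1-\rho)$ times the identity established in the proof of Lemma~\ref{lem:cov_subspace}. Testing against a unit vector $z\in\mathcal B$ and invoking the Lipschitz bound $\|C^{G,u}(u_t)z\|_{\Gamma}^2 \le c_{\mathrm{lip}}^2\lambda_{\max}V_e(0)\|C(u_t)z\|_{C_0}^2$ exactly as before then gives
\[
\frac{\mathrm d\langle z, C(u_t)z\rangle}{\mathrm d t} \ge -(1-\rho)\,m\,\|C(u_t)z\|^2,
\]
with $m = 2(c_{\mathrm{lip}}^2\lambda_{\max}V_e(0)+\sigma_{\max})$ as in \eqref{eq:const_m}.

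Next I would view $C(u_t)$ as an operator acting on $\mathcal B$, denote by $\zeta_t$ its smallest eigenvalue with unit eigenvector $\varphi(t)\in\mathcal B$, and use $\langle\varphi(t),\tfrac{\mathrm d}{\mathrm d t}\varphi(t)\rangle = 0$ together with $\|C(u_t)\varphi(t)\| = \zeta_t$ to discard the derivative-of-eigenvector contributions, arriving at the scalar Riccati inequality
\[
\frac{\mathrm d\zeta_t}{\mathrm d t} \ge -(1-\rho)\,m\,\zeta_t^2, \qquad \zeta_0 > 0.
\]
Comparing with $t\mapsto \left((1-\rho)mt + \zeta_0^{-1}\right)^{-1}$, which satisfies the corresponding equality and agrees at $t=0$, the standard Lyapunov-type comparison argument yields $\zeta_t \ge \left((1-\rho)mt+\zeta_0^{-1}\right)^{-1}$, and hence $\langle z, C(u_t)z\rangle \ge \zeta_t \ge \left((1-\rho)mt+\zeta_0^{-1}\right)^{-1}$ for every unit $z\in\mathcal B$, which is the claim.

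The only genuinely new step, and hence the main place to be careful, is the first one: verifying that the two inflation terms are exactly those with zero empirical mean, so that they are retained under the subtraction of \eqref{eq:cov_inflation_mean} and produce the uniform factor $1-\rho$ in front of the full spread dynamics. Once this bookkeeping is confirmed, every remaining estimate is identical to Lemma~\ref{lem:cov_subspace} up to that scalar factor, so no new analytic difficulty arises; the Lipschitz and Cauchy–Schwarz manipulations and the comparison argument carry over unchanged.
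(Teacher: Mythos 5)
Your proposal is correct and follows essentially the same route as the paper: derive that the spread (and hence the sample covariance) evolves exactly as in Lemma~\ref{lem:cov_subspace} up to the factor $1-\rho$, then repeat the Lipschitz/Cauchy--Schwarz bound and the smallest-eigenvalue Riccati comparison on $\mathcal B$. The only difference is that you spell out the subtraction of the mean dynamics to obtain the $(1-\rho)$-scaled spread equation, a bookkeeping step the paper leaves implicit, and this step is verified correctly.
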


\begin{proof}
The time evolution of the sample covariance is given by
\begin{align*}
\frac{{\mathrm d} C(u_t)}{{\mathrm d}t} &= \frac{1}{J}\sum_{j=1}^J \frac{{\mathrm d}e_t^{(j)}}{{\mathrm d}t}\left(e_t^{(j)}\right)^\top + \frac{1}{J}\sum_{j=1}^J e_t^{(j)}\left(\frac{{\mathrm d}e_t^{(j)}}{{\mathrm d}t}\right)^\top\\
& = -2(1-\rho)C^{u,G}(u_t)\Gamma^{-1}C^{G,u}(u_t) - 2(1-\rho)C(u_t)C_0^{-1}C(u_t)
\end{align*}
and similarly, for $z\in {\mathcal B}$ with $\|z\|=1$ we obtain
\begin{align*}
\langle z, \frac{{\mathrm d} C(u_t)}{{\mathrm d}t} z\rangle
& = -2(1-\rho) \|C^{G,u}(u_t)z\|_{\Gamma}^2 - 2(1-\rho)\|C(u_t)z\|_{C_0}^2
\end{align*}
and therefore
\begin{equation*}
\frac{{\mathrm d} \langle z,C(u_t)z\rangle }{{\mathrm d}t} \ge -2(1-\rho)(c_{\mathrm{lip}}^2\lambda_{\max}V_e(0)+\sigma_{\max})\|C(u_t)z\|^2.
\end{equation*}
We consider again the smallest eigenvalue $\zeta_t\ge0$ of $C(u_t)$ restricted to {$\mathcal B$} with unit-norm eigenvector $\varphi(t)\in {\mathcal B}$. For the time-evolution of $\zeta_t$ we obtain
\[
\frac{{\mathrm d}}{{\mathrm d}t}\zeta_t \ge -2(1-\rho)(c_{\mathrm{lip}}^2\lambda_{\max}V_e(0)+\sigma_{\max})\zeta_t^2,
\]
such that the assertion follows.
\end{proof}

Our convergence result will be based on the mean of the particle system. Therefore we will need the following extension of Lemma~\ref{lem:grad_approx}.
\begin{lemma}\label{lem:grad_approx_mean}
Suppose that Assumption~\ref{ass:Taylor} is satisfied. Given a particle system $(u^{(j)})_{j=1,\dots,J}$ it holds true that 
\[\|C^{u,G}(u)\Gamma^{-1}(\bar G-y) - C(u) \nabla\Phi(\bar u)\| \le b_1 J \sqrt{\Phi(\bar u)}
\left(\frac1J\sum_{k=1}^J\|u^{(k)}-\bar u\|^2\right)^{3/2},\]
for some $b_1$ independent of $J$.
\end{lemma}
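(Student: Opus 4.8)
The plan is to mirror the proof of Lemma~\ref{lem:grad_approx}, but to carry out the linearization \eqref{eq:linearization} around the ensemble mean $\bar u$ rather than around an individual particle. The algebraic fact I would exploit is the representation $\nabla\Phi(\bar u) = {\mathrm D}G(\bar u)^\ast\Gamma^{-1}(G(\bar u)-y)$, which lets me recognise the target term $C(u)\nabla\Phi(\bar u)$ once the empirical cross-covariance is expanded.

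First I would apply \eqref{eq:linearization} with $x_1=u^{(k)}$, $x_2=\bar u$. Averaging over $k$ and using $\frac1J\sum_k(u^{(k)}-\bar u)=0$ yields $\bar G = G(\bar u)+\overline{\mathrm{Res}}$ with $\overline{\mathrm{Res}}:=\frac1J\sum_{l=1}^J{\mathrm{Res}}(u^{(l)},\bar u)$, hence
\[ G(u^{(k)})-\bar G = {\mathrm D}G(\bar u)(u^{(k)}-\bar u) + {\mathrm{Res}}(u^{(k)},\bar u) - \overline{\mathrm{Res}}. \]
Substituting this into $C^{u,G}(u)\Gamma^{-1}(\bar G-y)=\frac1J\sum_k\langle G(u^{(k)})-\bar G,\bar G-y\rangle_\Gamma(u^{(k)}-\bar u)$ and retaining only the ${\mathrm D}G$-part in the first slot produces $C(u){\mathrm D}G(\bar u)^\ast\Gamma^{-1}(\bar G-y)$, which by $\bar G=G(\bar u)+\overline{\mathrm{Res}}$ equals $C(u)\nabla\Phi(\bar u)+C(u){\mathrm D}G(\bar u)^\ast\Gamma^{-1}\overline{\mathrm{Res}}$. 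The error therefore decomposes as
\[ C^{u,G}(u)\Gamma^{-1}(\bar G-y)-C(u)\nabla\Phi(\bar u) = \tfrac1J\sum_k\langle {\mathrm{Res}}(u^{(k)},\bar u)-\overline{\mathrm{Res}},\bar G-y\rangle_\Gamma(u^{(k)}-\bar u) + C(u){\mathrm D}G(\bar u)^\ast\Gamma^{-1}\overline{\mathrm{Res}}. \]

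The first summand I would bound exactly as in Lemma~\ref{lem:grad_approx}: using \eqref{eq:Taylor_residual}, Cauchy--Schwarz, and Jensen's inequality $\frac1J\sum_k\|u^{(k)}-\bar u\|\le(\frac1J\sum_k\|u^{(k)}-\bar u\|^2)^{1/2}$, together with $\|u^{(k)}-\bar u\|^2\le 2\|u^{(k)}-\bar u\|^2+\cdots$ as in the original estimate, which gives a bound of the form $b_{\mathrm{res}}\|\Gamma^{-1/2}\|\,J\,\|\bar G-y\|_\Gamma(\frac1J\sum_k\|u^{(k)}-\bar u\|^2)^{3/2}$, producing the $\sqrt{\Phi(\bar u)}$ factor after estimating $\|\bar G-y\|_\Gamma$. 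For the correction term I would use $\|C(u)\|\le\frac1J\sum_k\|u^{(k)}-\bar u\|^2$, $\|{\mathrm D}G(\bar u)\|\le c_{\mathrm{lip}}$ and $\|\overline{\mathrm{Res}}\|\le b_{\mathrm{res}}\frac1J\sum_k\|u^{(k)}-\bar u\|^2$.

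The main obstacle, and the genuine difference from Lemma~\ref{lem:grad_approx}, is that the target $\bar G-y$ now itself carries the residual $\overline{\mathrm{Res}}$, which is absent in the per-particle version where $G(u^{(j)})-y$ is exact. This is precisely the source of the extra term $C(u){\mathrm D}G(\bar u)^\ast\Gamma^{-1}\overline{\mathrm{Res}}$, which scales like $(\frac1J\sum_k\|u^{(k)}-\bar u\|^2)^2$ rather than $(\cdots)^{3/2}$ and does not on its own exhibit the $\sqrt{\Phi(\bar u)}$ factor. I would absorb it into the claimed bound by using that the spread is controlled, so that one excess power of the spread dominates the missing half-power, together with $\|\bar G-y\|_\Gamma\le\sqrt{2\Phi(\bar u)}+\|\Gamma^{-1/2}\|\,b_{\mathrm{res}}\frac1J\sum_k\|u^{(k)}-\bar u\|^2$, folding all remaining constants into $b_1$. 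Making this absorption clean while keeping $b_1$ independent of $J$ is the delicate bookkeeping step.
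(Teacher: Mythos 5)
Your proposal is correct in substance and uses the same core mechanism as the paper (linearization of $G$ around $\bar u$, recognising $C(u){\mathrm D}G(\bar u)^\ast\Gamma^{-1}(G(\bar u)-y)=C(u)\nabla\Phi(\bar u)$, and bounding the residual sum as in Lemma~\ref{lem:grad_approx}), but you perform the two steps in the opposite order, and that is what creates the extra term you worry about. The paper first applies the triangle inequality,
\[
\|C^{u,G}(u)\Gamma^{-1}(\bar G-y)-C(u)\nabla\Phi(\bar u)\|\le\|C^{u,G}(u)\Gamma^{-1}(\bar G-G(\bar u))\|+\|C^{u,G}(u)\Gamma^{-1}(G(\bar u)-y)-C(u)\nabla\Phi(\bar u)\|,
\]
bounds the first summand directly by $\|\Gamma^{-1}\|c_{\mathrm{lip}}^2\bigl(\frac1J\sum_k\|u^{(k)}-\bar u\|^2\bigr)^{3/2}$ via $\|C^{u,G}\|_{\mathrm{HS}}\le\sqrt{\|C\|_{\mathrm{HS}}\|C^{G,G}\|_{\mathrm{HS}}}$ and Lipschitz continuity of $G$ (no use of Assumption~\ref{ass:Taylor} needed here), and only then expands the second summand exactly as in Lemma~\ref{lem:grad_approx} with $\bar u$ in place of $u^{(j)}$. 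This ordering buys two things: the $\bar G$ versus $G(\bar u)$ discrepancy already appears at the power $3/2$ of the spread, so there is no excess half-power to absorb, and the Lemma~\ref{lem:grad_approx}-type term carries $\|G(\bar u)-y\|_\Gamma=\sqrt{2\Phi(\bar u)}$ cleanly rather than $\|\bar G-y\|_\Gamma$. Your ``delicate bookkeeping step'' is therefore avoidable by reordering, but you are right to be suspicious of the absorption as you describe it: the lemma is stated for an arbitrary particle system, where neither the spread nor $\Phi(\bar u)$ is controlled, so an excess power of the spread cannot be traded for the missing factor $\sqrt{\Phi(\bar u)}$. Note, however, that the paper's own first summand also carries no $\sqrt{\Phi(\bar u)}$, so strictly both arguments deliver a bound of the form $b_1J\bigl(\sqrt{\Phi(\bar u)}+\mathrm{const}\bigr)\bigl(\frac1J\sum_k\|u^{(k)}-\bar u\|^2\bigr)^{3/2}$; this is harmless where the lemma is used, since in the proof of Theorem~\ref{thm:main2} such an additive term only contributes another integrable forcing term to the Gronwall argument.
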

\begin{proof}
We apply triangular inequality 
\begin{align*}
\|C^{u,G}(u)\Gamma^{-1}(\bar G-y) - C(u) \nabla\Phi(\bar u)\| &\le \|C^{u,G}(u)\Gamma^{-1}(\bar G-G(\bar u))\|\\ &\quad + \|C^{u,G}(u)\Gamma^{-1}(G(\bar u)-y) - C(u) \nabla\Phi(\bar u)\|.
\end{align*}
Using Lipschitz continuity and the inequality $\|C^{u,G}\|_{\mathrm{HS}}\le \sqrt{\|C^{u,u}\|_{\mathrm{HS}}\|C^{G,G}\|_{\mathrm{HS}}}$ we can bound the first term 
\[ \|C^{u,G}(u)\Gamma^{-1}(\bar G-G(\bar u))\| \le \|\Gamma^{-1}\|c_{\mathrm{lip}}^2  \left(\frac1J\sum_{k=1}^J\|u^{(k)}-\bar u\|^2\right)^{3/2}.\]
The second term can be bounded similarly to Lemma~\ref{lem:grad_approx} and the assertion follows.
\end{proof}

\begin{theorem}\label{thm:main2}
Suppose Assumption~\ref{ass:smoothness} and Assumption~\ref{ass:Taylor} are satisfied, let $(u_t^{(j)},t\ge0)_{j=1,\dots,J}$ solve \eqref{eq:cov_inflation}  with $\rho\in[0,1)$  and  linearly independent initial ensemble $(u_0^{(j)})_{j=1,\dots,J}$ and let $u_\ast\in\mathcal B$ be the unique global minimizer of \eqref{eq:constr_opti}. Then there exist $c_1,c_2>0$ such that
\begin{equation}\label{eq:main_result2}
\Phi_R(\bar u_t)-\Phi_R(u_\ast) \le \left(\frac{c_1}{t+c_2}\right)^{\frac1\alpha},
\end{equation}
where $0<\alpha < (1-\rho)\frac{L}{\mu}(\sigma_{\max}+c_{\mathrm{lip}}\lambda_{\max}\|C(u_0)\|_{\mathrm{HS}})$. 
\end{theorem}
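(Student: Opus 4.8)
The plan is to mirror the proof of Theorem~\ref{thm:main}, but to monitor the single quantity $V(t) := \Phi_R(\bar u_t) - \Phi_R(u_\ast)$ rather than the ensemble average. The decisive observation is that, by \eqref{eq:cov_inflation_mean}, the mean evolves exactly as in the uninflated case $\rho=0$, namely $\tfrac{\mathrm d\bar u_t}{\mathrm dt} = C^{u,G}(u_t)\Gamma^{-1}(y-\bar G_t) - C(u_t)C_0^{-1}\bar u_t$, whereas the covariance lower bound from Lemma~\ref{lem:cov_subspace_ESRF} now carries the improved factor $(1-\rho)$. First I would rewrite this drift as an approximate preconditioned gradient step: since $\nabla\Phi_R = \nabla\Phi + C_0^{-1}(\cdot)$, Lemma~\ref{lem:grad_approx_mean} gives $\tfrac{\mathrm d\bar u_t}{\mathrm dt} = -C(u_t)\nabla\Phi_R(\bar u_t) + r_t$, where the residual obeys $\|r_t\| \le b_1 J\sqrt{\Phi(\bar u_t)}\,V_e(t)^{3/2}$.

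Next I would differentiate along the flow, $\tfrac{\mathrm d}{\mathrm dt}\Phi_R(\bar u_t) = \langle \nabla\Phi_R(\bar u_t), \tfrac{\mathrm d\bar u_t}{\mathrm dt}\rangle$, and split the right-hand side into the descent term $-\langle \nabla\Phi_R(\bar u_t), C(u_t)\nabla\Phi_R(\bar u_t)\rangle$ and the error term $\langle \nabla\Phi_R(\bar u_t), r_t\rangle$. Because the subspace property (Proposition~\ref{prop:subspace_prop}) forces $\bar u_t\in\mathcal B$ and $C(u_t)=\mathcal P_{\mathcal B}C(u_t)\mathcal P_{\mathcal B}$, the lower bound of Lemma~\ref{lem:cov_subspace_ESRF} yields $\langle \nabla\Phi_R(\bar u_t), C(u_t)\nabla\Phi_R(\bar u_t)\rangle \ge \tfrac{1}{(1-\rho)mt+\zeta_0^{-1}}\|\mathcal P_{\mathcal B}\nabla\Phi_R(\bar u_t)\|^2$. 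Applying the constrained PL inequality of Lemma~\ref{lem:constr_PL} in its projected form, with $\mathcal P_{\mathcal B}\bar u_t=\bar u_t$, then converts the projected gradient into $V(t)$ itself, so the descent term is bounded above by $-\tfrac{1}{\alpha t+a}V(t)$ with $\alpha=(1-\rho)\tfrac{L}{\mu}m$ and a suitable $a>0$; this is precisely where the gain $(1-\rho)$ enters the exponent.

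For the error term I would use Cauchy--Schwarz followed by Young's inequality to absorb part of it into $\|\nabla\Phi_R(\bar u_t)\|^2$ and part into $\Phi(\bar u_t)\le\Phi_R(\bar u_t)$, and control the prefactor $V_e(t)^{3/2}$ by Lemma~\ref{lem:ensemble_collapse_ESRF}, which shows $V_e(t)=O(1/t)$ and hence $V_e(t)^{3/2}=O(t^{-3/2})$. This produces the same scalar differential inequality as in the main proof, $\tfrac{\mathrm dV}{\mathrm dt}\le -\tfrac{1}{\alpha t+a}V(t) + \bigl(\tfrac{z_1}{z_2+z_3t}\bigr)^{3/2}\bigl(V(t)+c\bigr)$, after which the Gronwall argument from Theorem~\ref{thm:main} delivers the claimed rate $\bigl(\tfrac{c_1}{t+c_2}\bigr)^{1/\alpha}$. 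I expect the main obstacle to be the extra discrepancy between $\bar G_t$ and $G(\bar u_t)$ that distinguishes the mean dynamics from a genuine gradient flow; this is exactly what forces the use of Lemma~\ref{lem:grad_approx_mean} (with its additional Lipschitz term) in place of Lemma~\ref{lem:grad_approx}, and one must verify that its $3/2$ decay still keeps the Gronwall integral $\int_0^t (z_2+z_3s)^{-3/2}\,\mathrm ds$ finite so that the error does not degrade the asymptotic rate.
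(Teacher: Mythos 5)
Your proposal is correct and follows essentially the same route the paper intends: the paper's proof of Theorem~\ref{thm:main2} is exactly the argument of Theorem~\ref{thm:main} rerun on the mean dynamics \eqref{eq:cov_inflation_mean}, with Lemma~\ref{lem:grad_approx_mean} replacing Lemma~\ref{lem:grad_approx} to absorb the $\bar G_t$ versus $G(\bar u_t)$ discrepancy, and Lemmas~\ref{lem:ensemble_collapse_ESRF} and \ref{lem:cov_subspace_ESRF} supplying the $(1-\rho)$-modified collapse and covariance bounds that produce the improved exponent $\alpha=(1-\rho)\frac{L}{\mu}m$. You correctly identified the one genuine subtlety (the extra Lipschitz term in the residual for the mean) and that the $3/2$ decay keeps the Gronwall integral finite, so nothing is missing.
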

\begin{proof}
With the above presented results in Lemma~\ref{lem:ensemble_collapse_ESRF}, Lemma~\ref{lem:cov_subspace_ESRF} and Lemma~\ref{lem:grad_approx_mean}, the proof is similar to the proof of Theorem~\ref{thm:main}.
\end{proof}

Through the incorporation of covariance inflation we have reduced the ensemble collapse by decreasing the constant of the degeneration rate through the constant inflation parameter $(1-\rho)\in(0,1]$. Resulting from this slow down of the ensemble collapse the convergence rate of TEKI as optimization method can be accelerated by the factor $(1-\rho)$.

\section{Numerical experiment}\label{sec:numerics}

In our numerical experiment we consider a nonlinear inverse problem based on the one-dimensional elliptic boundary-value problem in the form of 
\begin{equation}\label{eq:darcyflow}
\begin{array}{r@{}l}
-\frac{{\mathrm d}}{{\rm d}s}\left(\exp(u(s))\frac{{\mathrm d}}{{\mathrm d}s}p(s)\right) &= 1,   \qquad  s\in[0,1],\\
 p(s) &= 0,  \qquad s\in\{0,1\}.
 \end{array}
\end{equation}
Given discrete noisy observations $y\in\mathbb R^{K}$ of the solution $p\in H_0^1([0,1];\R)$ we consider the task of recovering the unknown parameter $u\in L^\infty([0,1])$.  To be more precise, we consider $y = (\cO\circ S)(u(s))+\eta$, where $\eta\sim\cN(0,\Gamma)$ is additive Gaussian noise,  $S:\ L^\infty([0,1])\to H_0^1([0,1];\R)$ denotes the solution operator of \eqref{eq:darcyflow} and $\cO:\ H_0^1([0,1];\R)\to\R^{K}$ denotes the observation operator, which provides function values at $n_y$ equidistant observation points in $[0,1]$,  i.e.~
$z(\cdot)\in H_0^1([0,1];\R)\mapsto \cO(z(\cdot)) = (z(s_1),\dots,z(s_{K}))^{\top}$, $s_i = \frac{i}{K},\ i=1,\dots,K$. 

For our numerical results we replace $S$ by an numerical solution operator for \eqref{eq:darcyflow} on the grid $\cD\subset[0,1]$ with mesh size $h=2^{-8}$ 
and restrict $u(\cdot)$ to the computational grid $s_l = l\,h$, $l=1,\ldots,2^8-1$, i.e.~we consider the finite dimensional parameter space $\mathcal X=\mathbb R^{n_x}$ with $n_x=2^8$. Our observations are taken at $K = 2^5-1$ observation points and the noise covariance is assumed to be $\Gamma = 0.01\cdot{\mathrm{Id}}\in\R^{K\times K}$.

We set a Gaussian prior $\mathcal N(0,C_0)$ on the unknown parameter with covariance operator $C_0=\beta(-\Delta)^{-1}$,  $\beta = 10$. The underlying ground truth is constructed as realization $u^\dagger\sim\mathcal N(0,C_0)$ and shown in Figure~\ref{fig:reference} with corresponding taken observations.

\begin{figure}[htb!]
\centering
\includegraphics[scale=0.23]{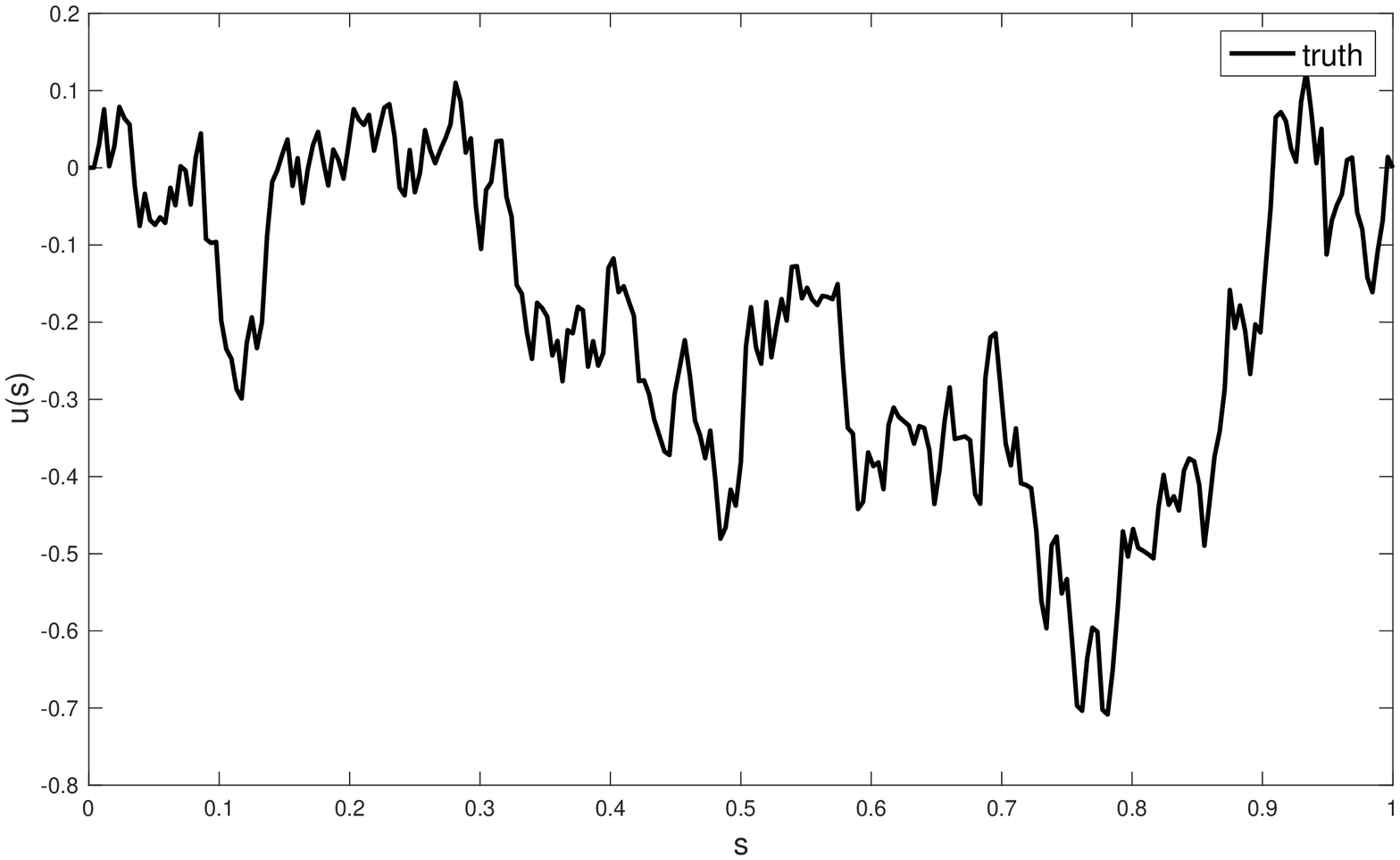}
\includegraphics[scale=0.23]{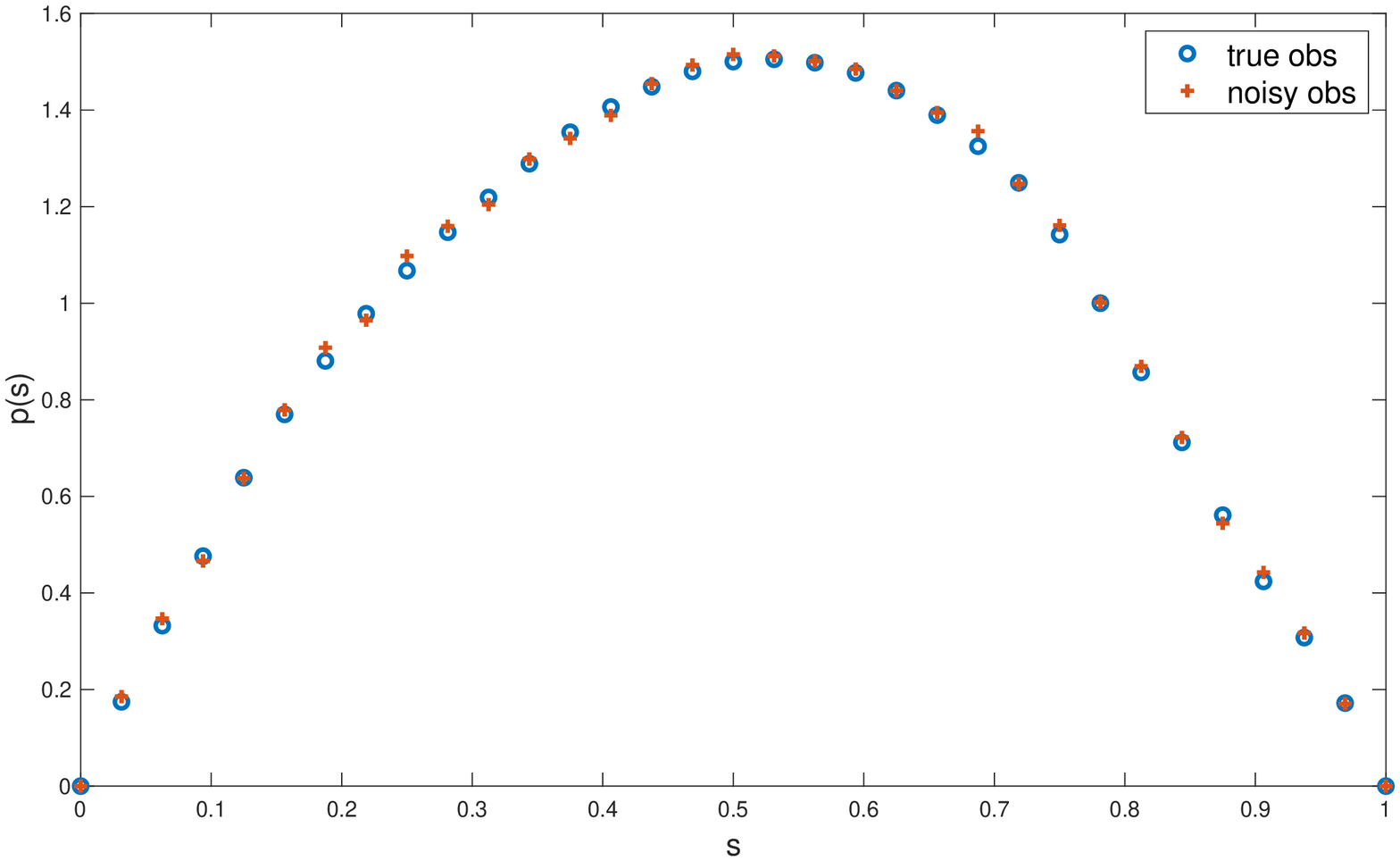}
\caption{Underlying ground truth of the unknown parameter $u$ (left) and the corresponding (noisy) observations (right). }
\label{fig:reference}
\end{figure}

\subsection{Fixed regularization}

{In our first experiment, we consider a fixed regularization and two types of initialization for the implementation of the TEKI flow:}
\begin{itemize}
\item \textbf{Basis initialization}: we choose the initial ensemble of particles based on (ordered) eigenvalues and eigenfunctions $\{\lambda_j,z_j\}_{j\in\N}$ of the covariance operator $C_0$. 
\item \textbf{Random initialization}: we choose the initial ensemble of particles based on i.i.d.~realizations of the Gaussian prior $\mathcal N(0,C_0)$.
\end{itemize}
The TEKI flow with covariance inflation and regularization parameter $\kappa = 0.01^2$
\begin{equation}\label{eq:TEKI_fixed}
\begin{split}
\frac{{\mathrm d}u_t^{(j)}}{{\mathrm d}t} &= C^{u,G}(u_t)\Gamma^{-1}(y-G(u_t^{(j)}))-C(u_t)\kappa C_0^{-1}u_t^{(j)}\\  &\quad+ \rho C^{u,G}(u_t)\Gamma^{-1}(G(u_t^{(j)})-\bar G) +\rho C(u_t)\kappa C_0^{-1}(u_t^{(j)}-\bar u)
\end{split}
\end{equation}
is tested for different choices of inflation scaling $\rho\in\{0,0.25,0.5,0.8\}$ and solved through \texttt{MATLAB} via the function \texttt{ode45} until final time $T=10^7$
with the aim of minimizing 
\begin{equation*}
\Phi_R(u) =  \frac12 \|G(u)-y\|_\Gamma^2 + \frac\kappa2\|u\|_{C_0}^2,\quad u\in\mathbb R^{n_x}.
\end{equation*}
The reference solution $u_\ast^J$ of the constrained optimization problem \eqref{eq:constr_opti2} depending of the ensemble size $J\in\{5,20,50\}$ is approximated with the \texttt{MATLAB} function \texttt{fmincon}. We note that $u_\ast^J$ depends on the initial ensemble and differs for basis and random initialization.

In Figure~\ref{fig:spread_J=50} we show that the ensemble collapses with rate $1/t$ and illustrate the effect of the covariance inflation slowing down the collapse by a constant. This observation verifies our expectation on the behavior from the results in Lemma~\ref{lem:ensemble_collapse_ESRF} and Lemma~\ref{lem:cov_subspace_ESRF}.  Here, we observe no significant difference between basis initialization and random initialization.  We demonstrate our main convergence result presented in Theorem~\ref{thm:main} and Theorem~\ref{thm:main2} through Figure~\ref{fig:Phi_J=5}-\ref{fig:Phi_J=50}, where we observe that the lossfunctions evaluated at the particle mean converge towards the expected minimum restricted to the subspace spanned by the initial ensemble {spread}.  We firstly observe, that increasing the ensemble size, the resulting minimum improves since the initial subspace increases. Secondly, we observe that the resulting estimates for basis function initialization lead to lower values in the lossfunction compared to random initialization. This behavior is expected since the basis functions are weighted through the ordered eigenvalues. Furthermore, in Figure~\ref{fig:Phi_diff_J=50} we illustrate the improved convergence through increasing variance inflation parameter $\rho$. Finally, Figure~\ref{fig:par_est} shows the resulting estimators of the unknown parameter $u$ where we observe smooth estimates for basis function initialization compared to the estimates resulting from random initialization.

\begin{figure}[htb!]
\centering
\includegraphics[scale=0.3]{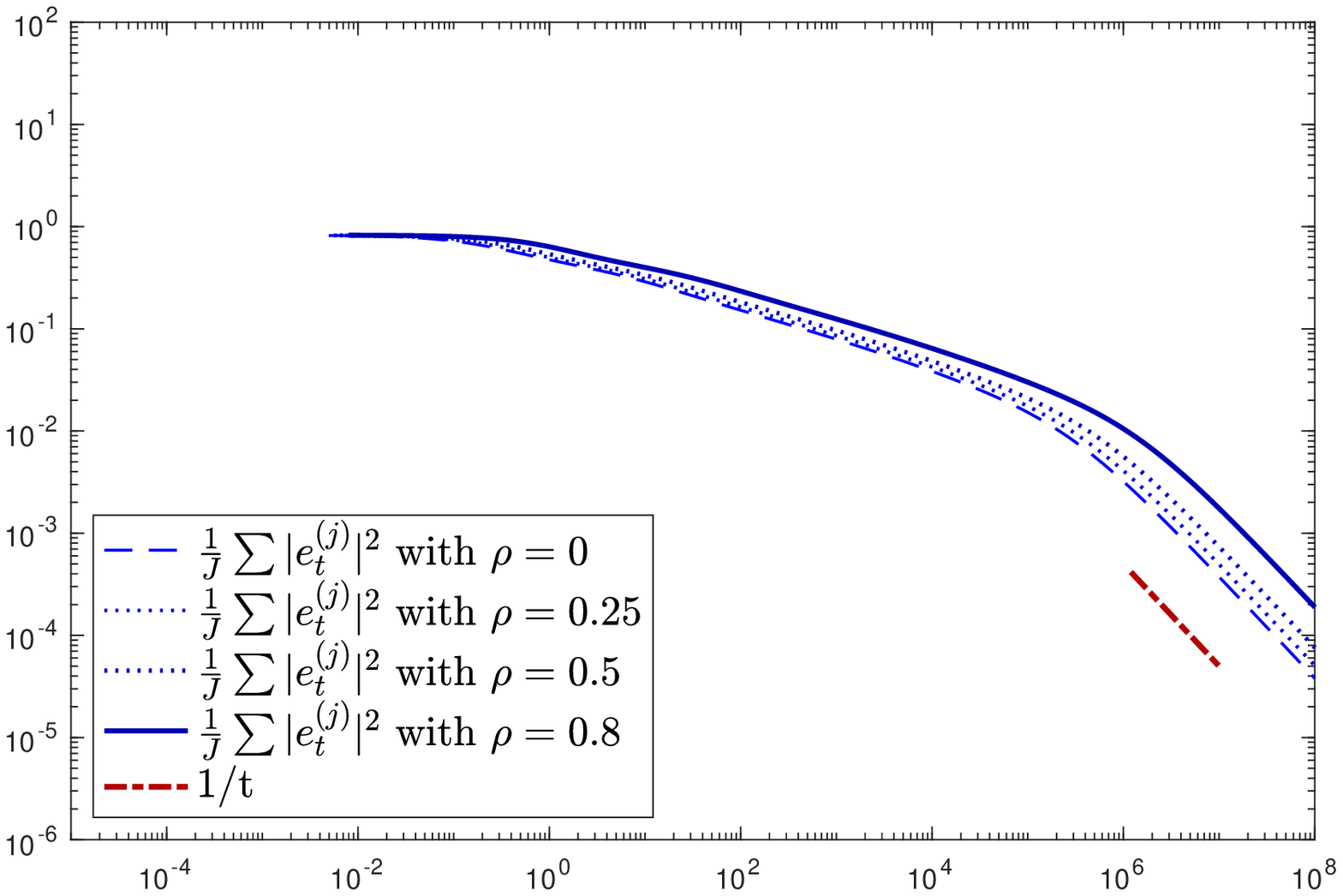}
\includegraphics[scale=0.3]{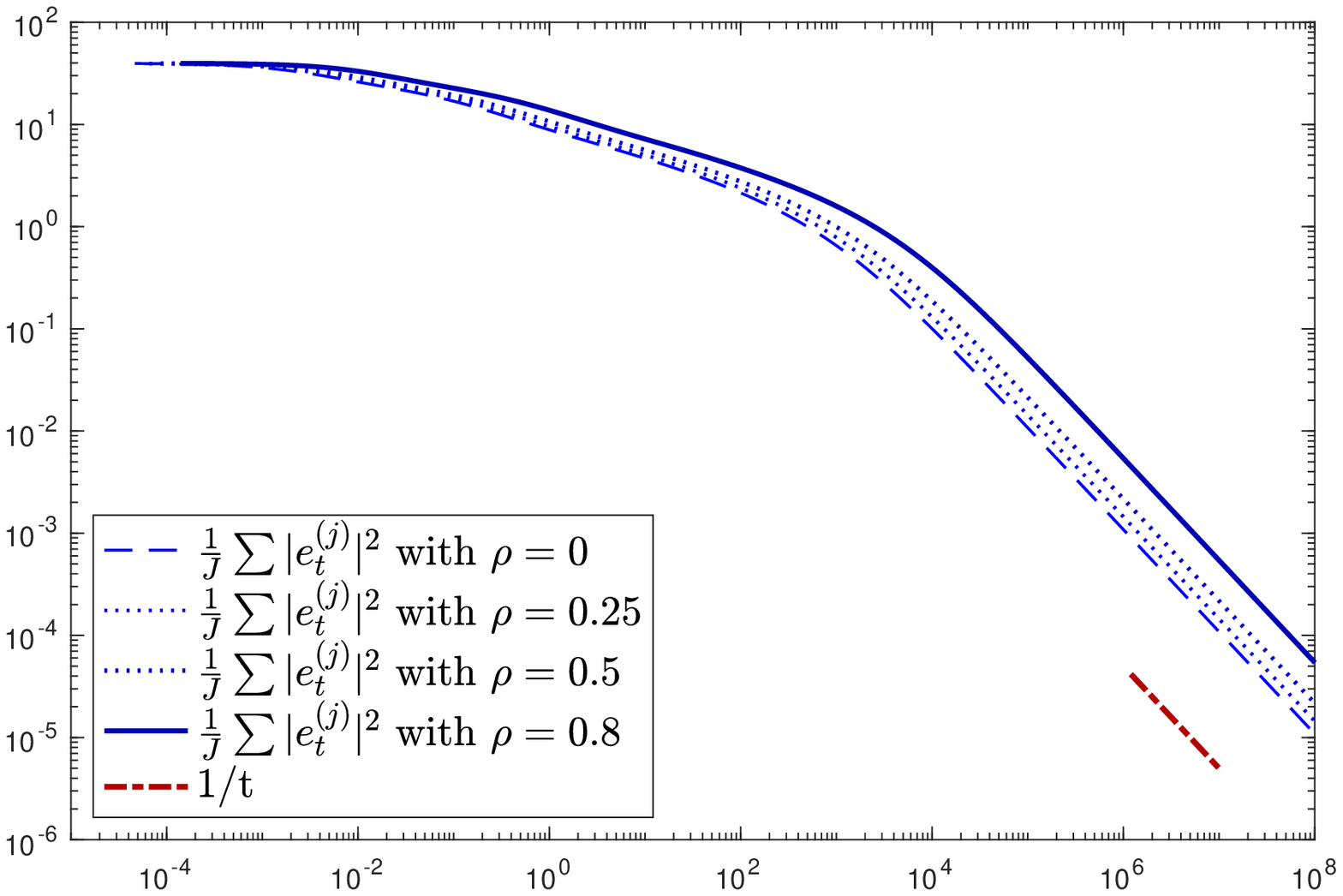}
\caption{Particle spread $\frac1J\sum_{j=1}^J \|e_t^{(j)}\|^2$ of the TEKI flow for basis function initialization (left) and random initialization (right) with ensemble size $J=50$.}
\label{fig:spread_J=50}
\end{figure}

\begin{figure}[htb!]
\centering
\includegraphics[scale=0.3]{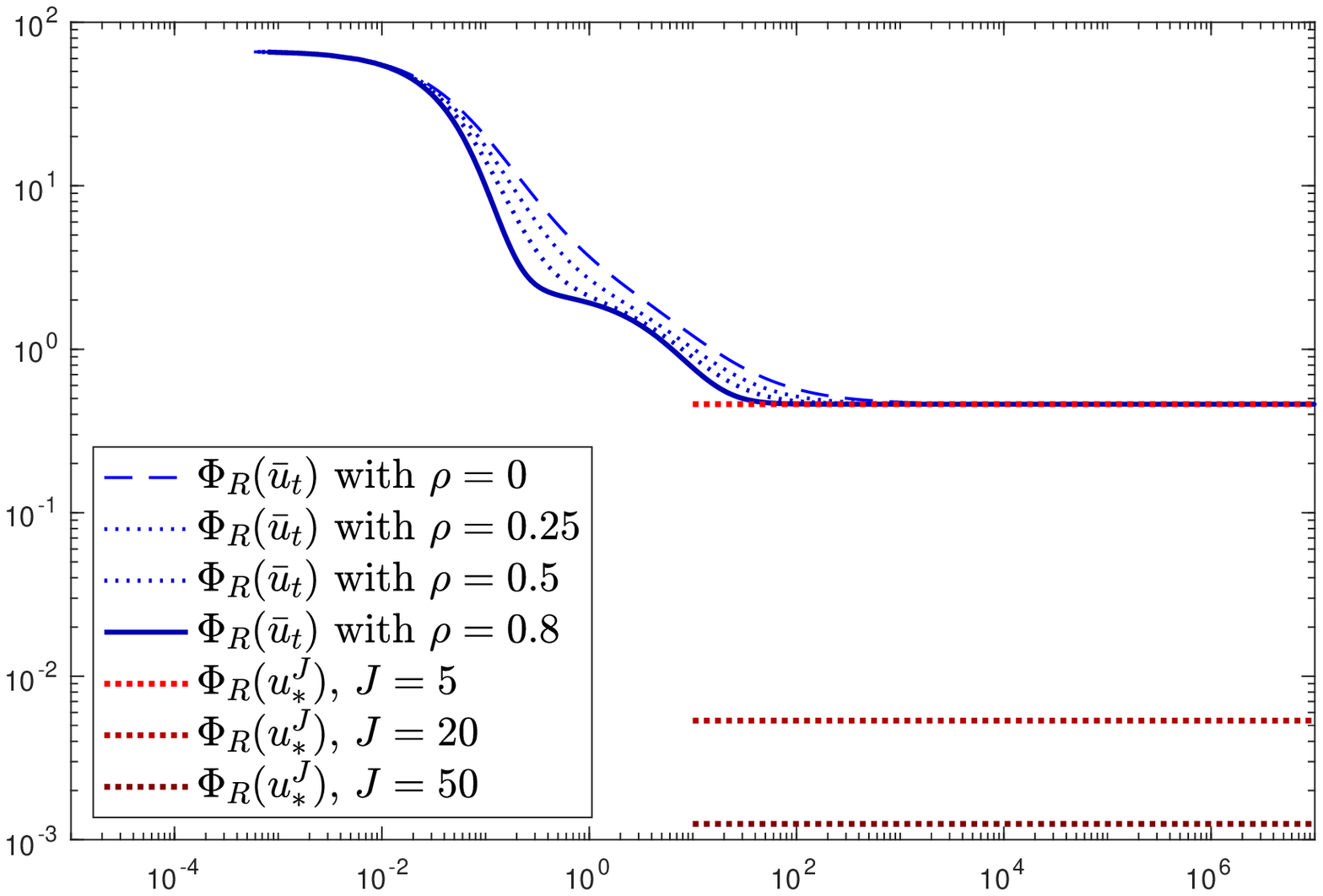}
\includegraphics[scale=0.3]{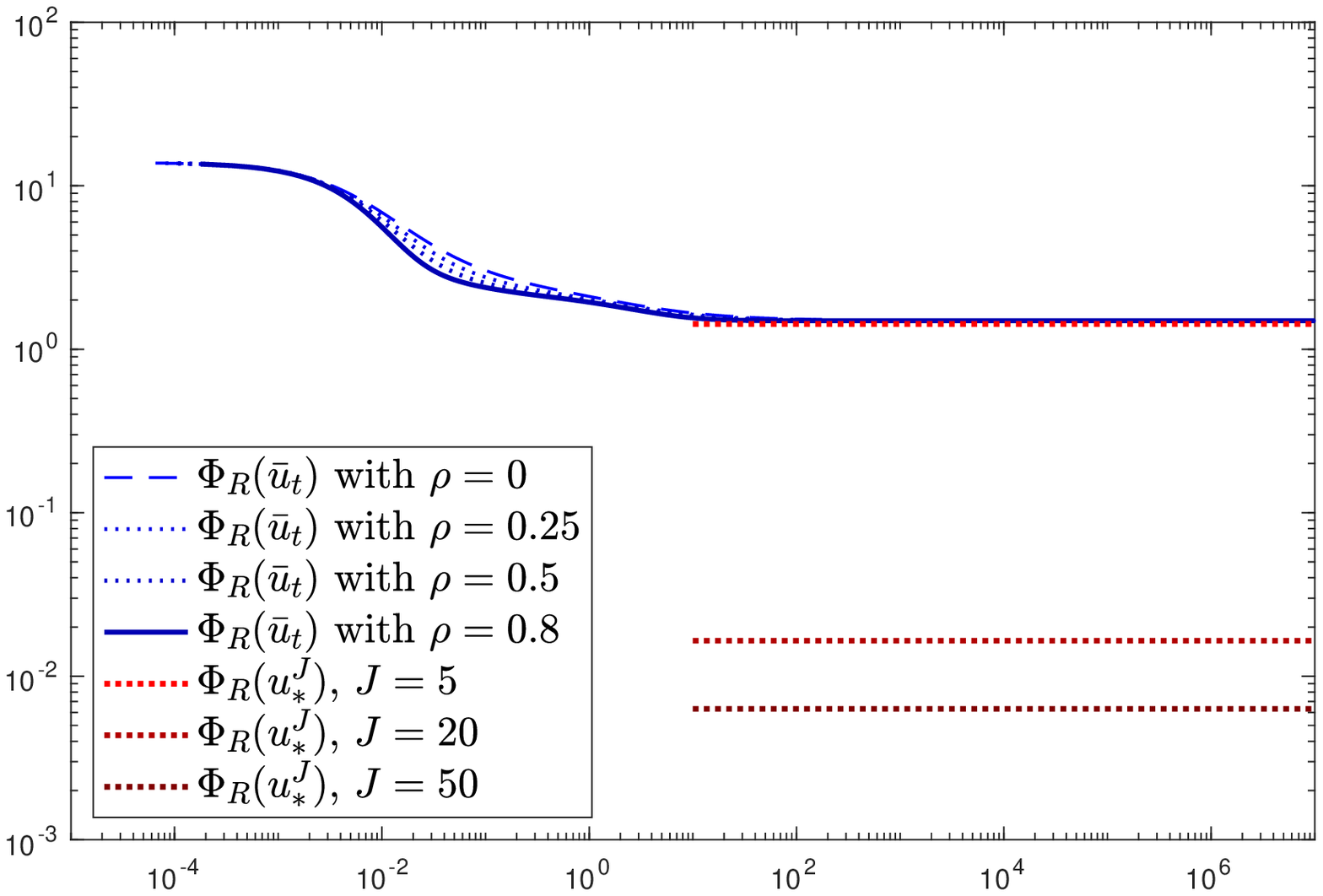}
\caption{Lossfunction $\Phi_R$ evaluated at the particle mean $\bar u_t$ of the TEKI flow for  basis function initialization (left) and random initialization (right) with ensemble size $J=5$.}
\label{fig:Phi_J=5}
\end{figure}

\begin{figure}[htb!]
\centering
\includegraphics[scale=0.3]{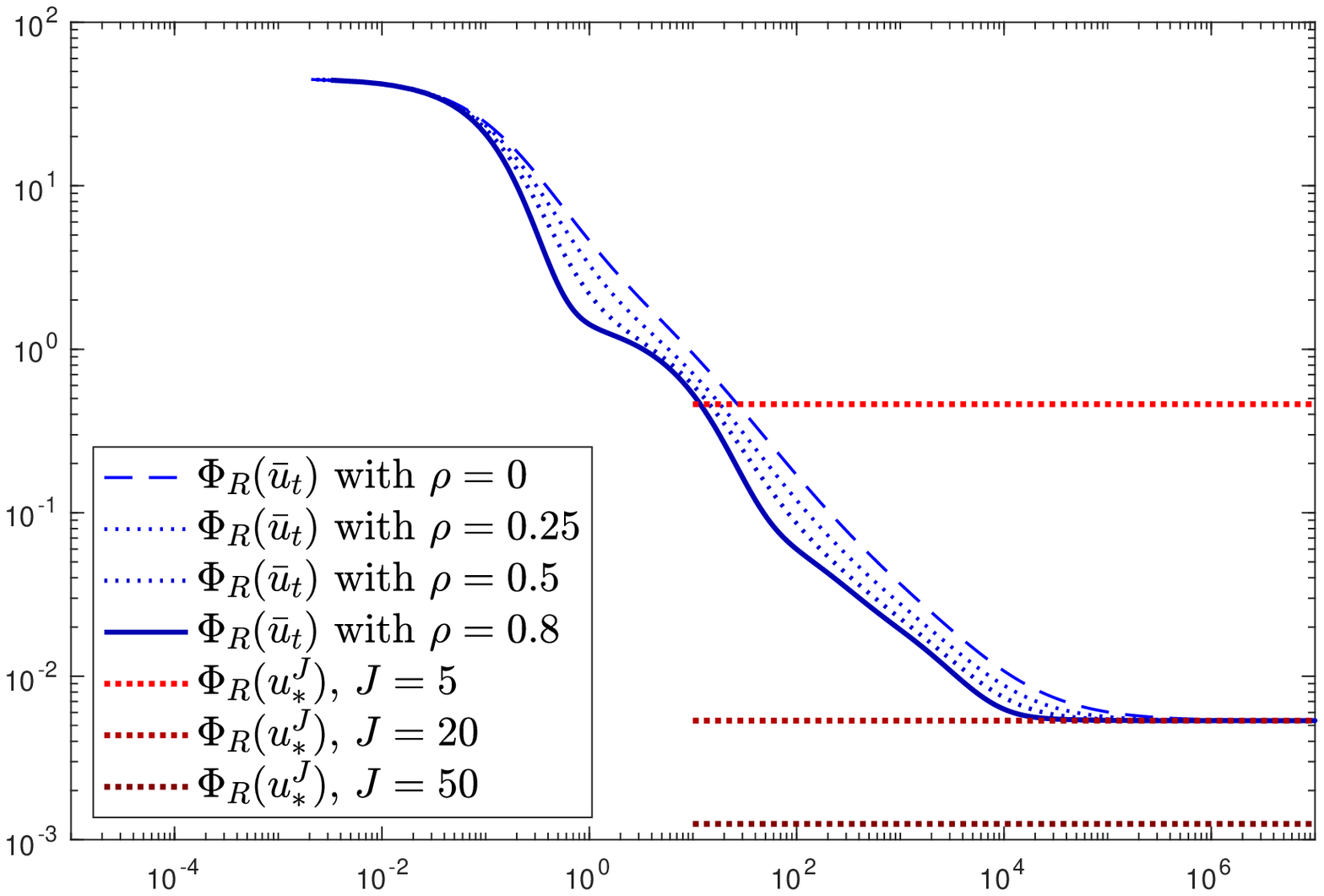}
\includegraphics[scale=0.3]{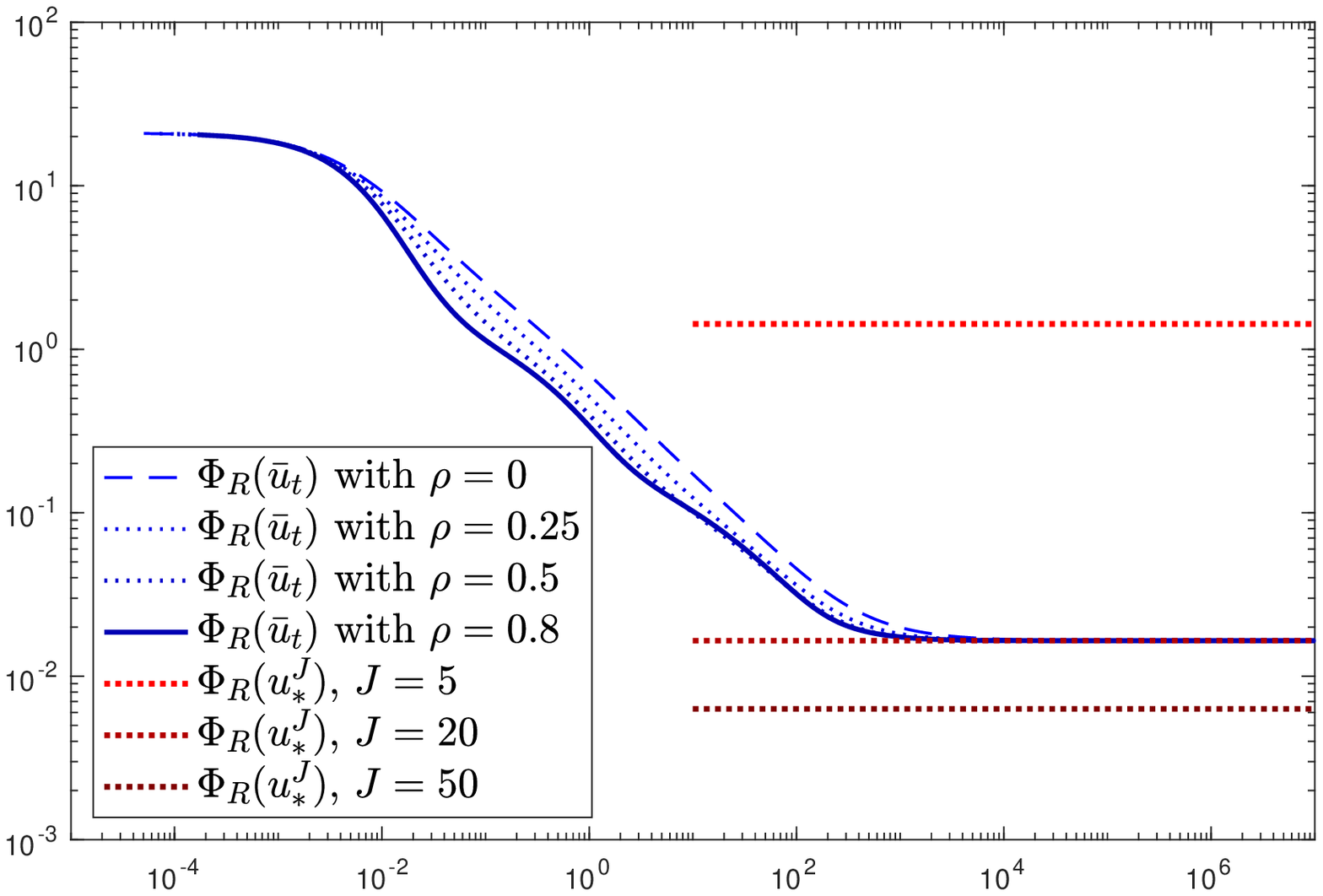}
\caption{Lossfunction $\Phi_R$ evaluated at the particle mean $\bar u_t$ of the TEKI flow for  basis function initialization (left) and random initialization (right) with ensemble size $J=20$.}
\label{fig:Phi_J=20}
\end{figure}

\begin{figure}[htb!]
\centering
\includegraphics[scale=0.3]{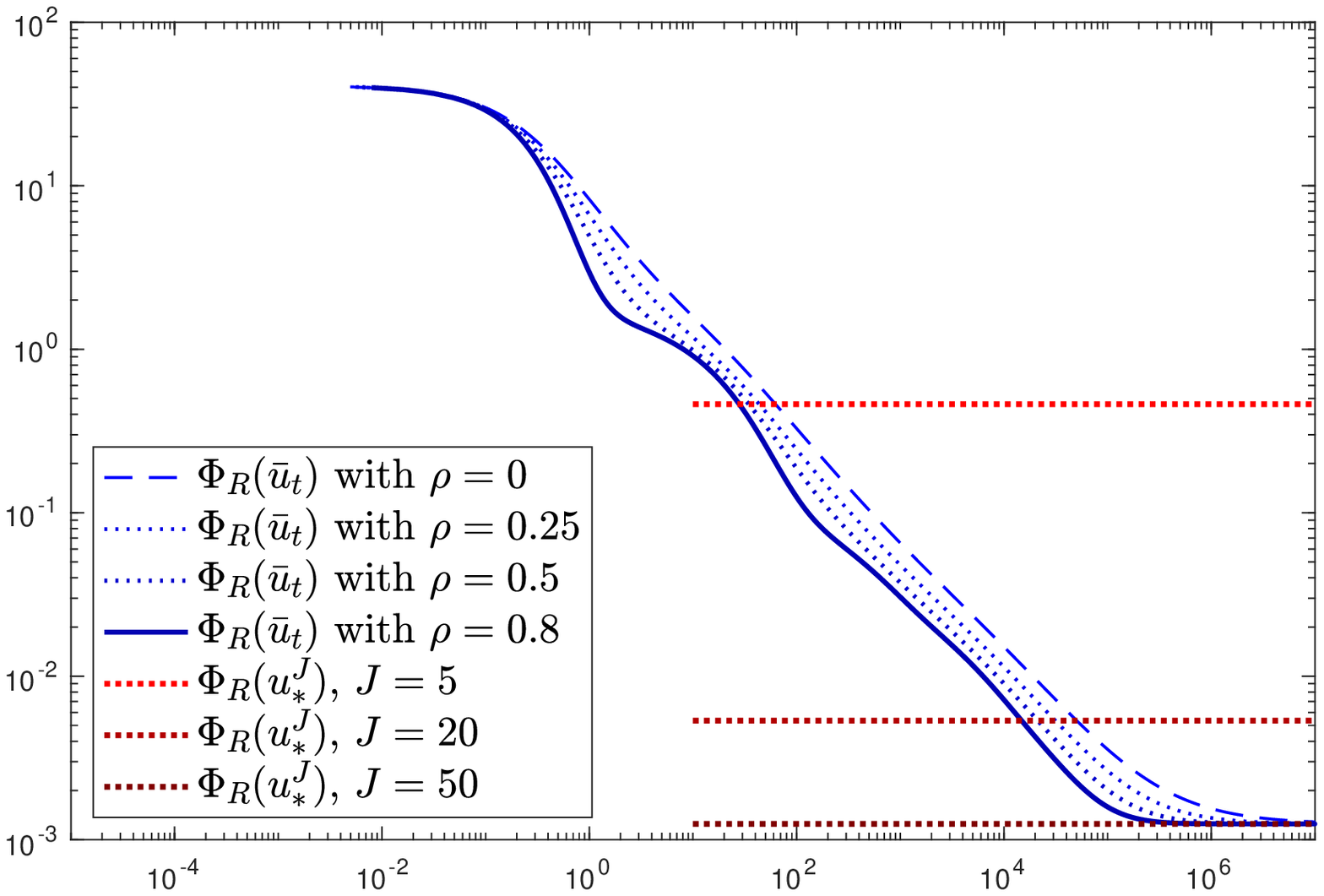}
\includegraphics[scale=0.3]{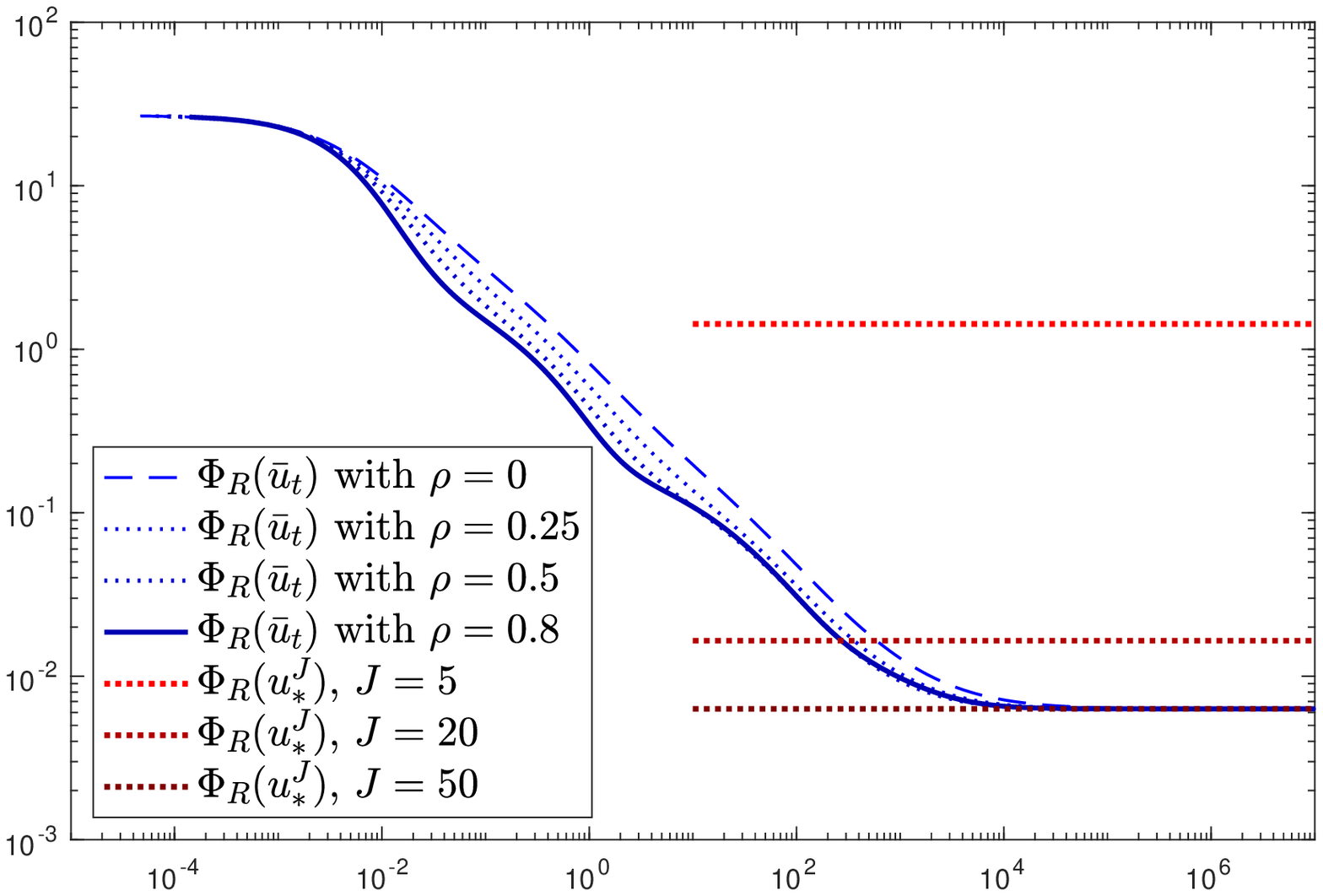}
\caption{Lossfunction $\Phi_R$ evaluated at the particle mean $\bar u_t$ of the TEKI flow for  basis function initialization (left) and random initialization (right) with ensemble size $J=50$.}
\label{fig:Phi_J=50}
\end{figure}

\begin{figure}[htb!]
\centering
\includegraphics[scale=0.3]{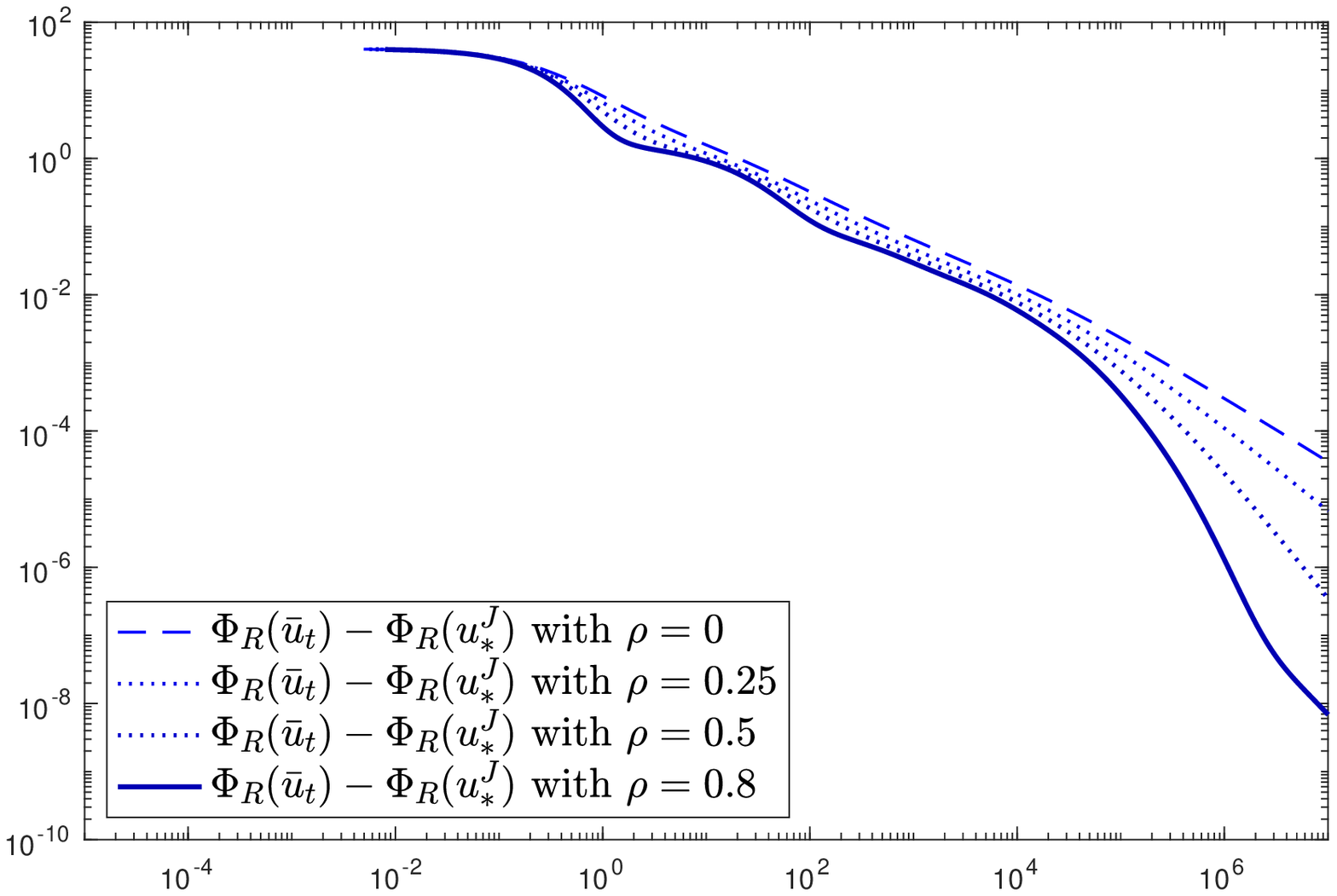}
\includegraphics[scale=0.3]{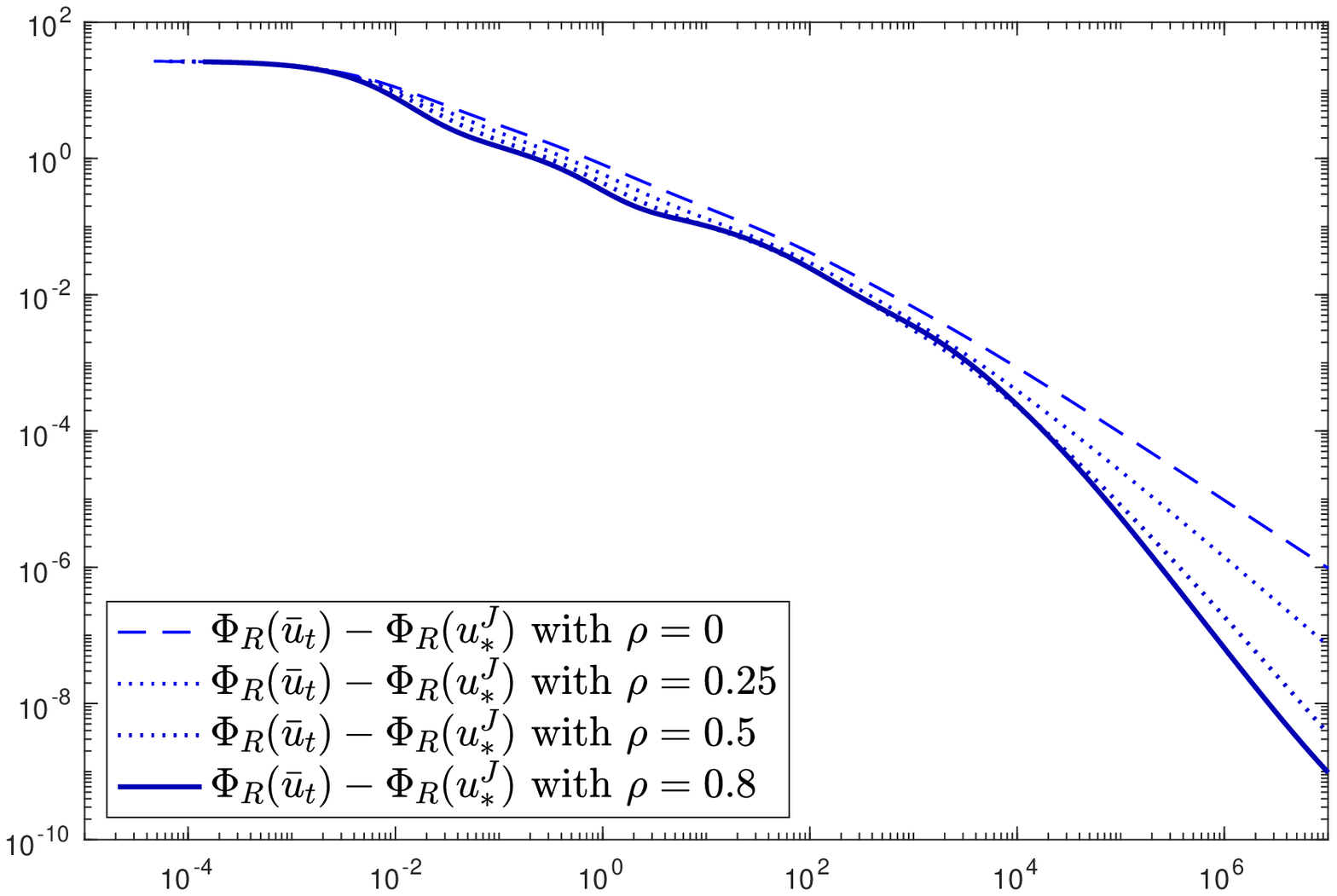}
\caption{Difference of the lossfunction $\Phi_R$ evaluated at the particle mean $\bar u_t$ the TEKI flow for basis function initialization (left) and random initialization (right) with ensemble size $J=50$ and the optimal solution restricted to the initial subspace $u_\ast^J$.}
\label{fig:Phi_diff_J=50}
\end{figure}

\begin{figure}[htb!]
\centering
\includegraphics[scale=0.23]{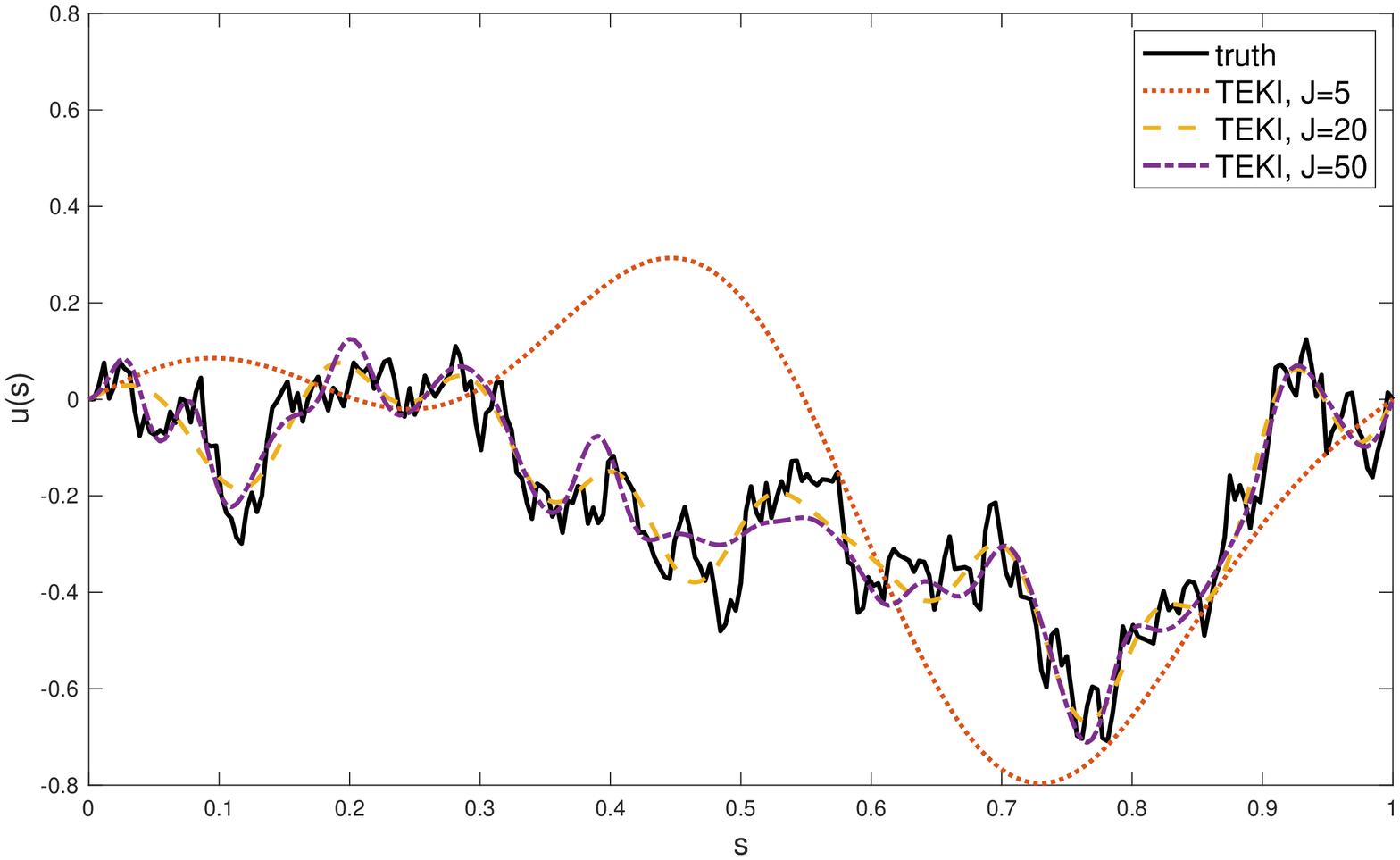}
\includegraphics[scale=0.23]{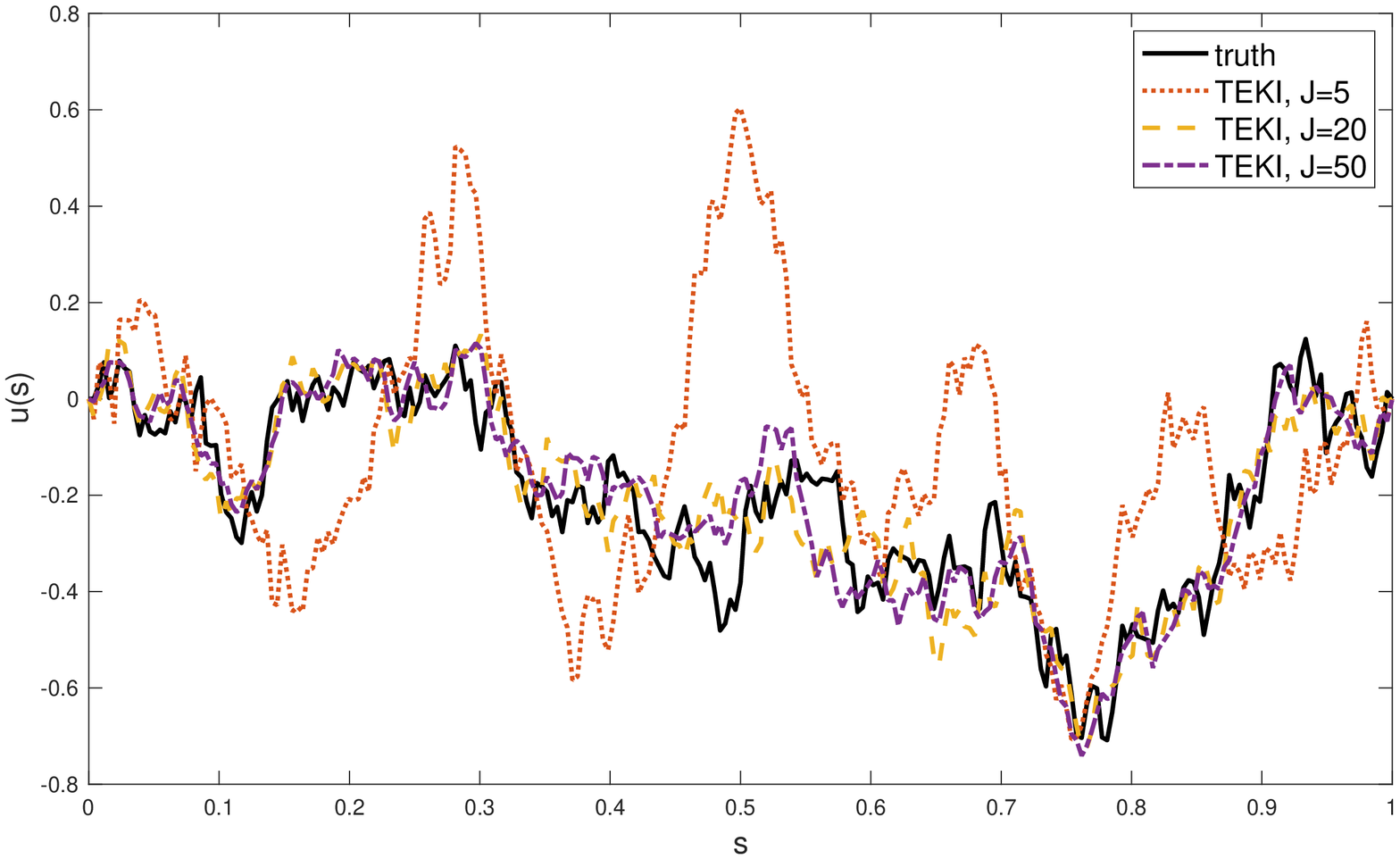}
\caption{Resulting estimates of the unknown parameter $u$ for different ensemble sizes $J\in\{5,20,50\}$ of the TEKI flow initialized by basis functions (left) and random (right). }
\label{fig:par_est}
\end{figure}

\newpage
{
\subsection{Adaptive regularization}
While in the previous experiment we have tested TEKI with fixed regularization parameter in order to verify our theoretical findings, we are providing a second experiment motivated from a more practical point of view. Here, we move away from the optimistic scenario, where the reference $u^\dagger$ is chosen from the correct prior, instead we assume $u^\dagger\sim\mathcal N(0,{C}_0^\dagger)$ with ${C}_0^\dagger:= \tilde{\beta}(-\Delta)^{-\alpha}$,  $\tilde \beta = 1$, $\alpha = 2$. Hence, the incorporated smoothness information through the regularization matrix $C_0$ may be misleading. We therefore propose to apply an adaptive regularization schemes introduced \cite{weissmann2021adaptive} for the stochastic formulation of TEKI.  The idea of choosing a "good" type of Tikhonov regularization is motivated from the hierarchical approach for Bayesian inverse problems. In the classical setting we are interested in the posterior distribution
\[ u\mid y \sim \mu({\mathrm d}x) \propto \exp(-\Phi(x;y))\,\mu_0({\mathrm d}x)\]
whereas in the hierarchical setting we assume that the prior is depending on some unknown hyperparameter $\Theta\in\R^{n_\theta}$ such that we are interested in the posterior
\[(u,\Theta)\mid y \sim \mu({\mathrm d}(x,\theta)) \propto \exp(-\Phi(x;y))\mu_0({\mathrm d}x,\theta)q_0({\mathrm d}\theta) \]
assuming a prior $\Theta\sim q_0({\mathrm d}\theta)$.  We consider the case where $\mu_0$ is given as Gaussian prior with pdf
\[\mu_0(x,\theta) = \frac{1}{\sqrt{2\pi\det(C_0(\theta))}}\exp(-\|x-m_0(\theta)\|_{C_0(\theta)}^2,\]
where the hyperparameter $\theta$ parametrizes the prior mean $m_0(\theta)$ and prior covariance $C_0(\theta)$, and we assume an uniform prior on $\Theta\sim\mathcal U((0,M)^{n_\theta})$. When computing the joint MAP of $(u,\Theta)$ the normalizing constant plays an important role such that the optimization problem reads as
\[\min_{x\in\R^{n_x},\theta\in(0,M)^{n_\theta}}\  \frac12\|G(x)-y\|_\Gamma^2 + \frac12\|u-m_0(\theta)\|_{C_0(\theta)}+\frac12\log(\det(C_0(\theta))).\]
In \cite{weissmann2021adaptive} the authors propose to apply a two-level update scheme, where on the first level one applies TEKI to optimize w.r.t.~$x$ given $\theta$, while in the second level one applies Gradient descent w.r.t.~$\theta$ given $x$.  In continuous time the resulting adaptive TEKI flow then reads as
\begin{align*}\label{eq:twolevelTEKI}
\frac{{\mathrm d}u_t^{(j)}}{{\mathrm d}t} &= C^{u,G}(u_t)\Gamma^{-1}(y-G(u_t^{(j)}))-C(u_t)C_0(\theta_t^{(j)})^{-1}u_t^{(j)},\\
\frac{{\mathrm d}\theta_t^{(j)}}{{\mathrm d}\theta} &= -\nabla_\theta \left(\frac12\|u_t^{(j)}-m_0(\theta)\|_{C_0(\theta^{(j)})} +\frac12\log(\det(C_0(\theta^{(j)})))\right).
\end{align*}
Alternatively, we consider the second level as Gradient flow w.r.t.~$\theta_t$ given the particle mean $\bar u_t$, i.e. 
\begin{equation}\label{eq:twolevelTEKI2}
\begin{split}
\frac{{\mathrm d}u_t^{(j)}}{{\mathrm d}t} &= C^{u,G}(u_t)\Gamma^{-1}(y-G(u_t^{(j)}))-C(u_t)C_0(\theta_t^{(j)})^{-1}u_t^{(j)},\\
\frac{{\mathrm d}\theta_t}{{\mathrm d}\theta} &= -\nabla_\theta \left(\frac12\|\bar u_t-m_0(\theta)\|_{C_0(\theta)} +\frac12\log(\det(C_0(\theta)))\right).
\end{split}
\end{equation}
We note that this type of adaptive choice of regularization remains derivative-free w.r.t.~the forward model $G$. In the following, we consider $m_0(\theta)=0$ and a parametrization of $C_0(\theta)$ of the form
\[C_0(\theta) = V\begin{pmatrix} 1/\theta_1 & & \\
&\ddots&\\
& & 1/\theta_{n_x}
\end{pmatrix}V^\top, \]
where $V\in\R^{n_x\times n_x}$ is an orthonormal matrix built through the eigenfunctions of $C_{{\mathrm{fix}}} = (-\Delta)^{-1}$. We compare the adaptive TEKI flow \eqref{eq:twolevelTEKI2} with the fixed TEKI flow \eqref{eq:TEKI_fixed}  (with $\rho =0$) with $\kappa \in\{0.001, 1\}$ and fixed regularization matrix $C_{{\mathrm{fix}}}$. As mentioned above the underlying ground truth has been drawn from $u^\dagger \sim\mathcal N(0,{C}_0^\dagger)$. We initialize all of the three methods with the same initial ensemble $u_0^{(j)}\sim \mathcal N(0, C_{{\mathrm{fix}}}), \ J=100$ and choose $\theta_0$ such that $C_0(\theta_0) = 1\cdot C_{{\mathrm{fix}}}$.}

\begin{figure}[htb!]
\centering
\includegraphics[scale=0.23]{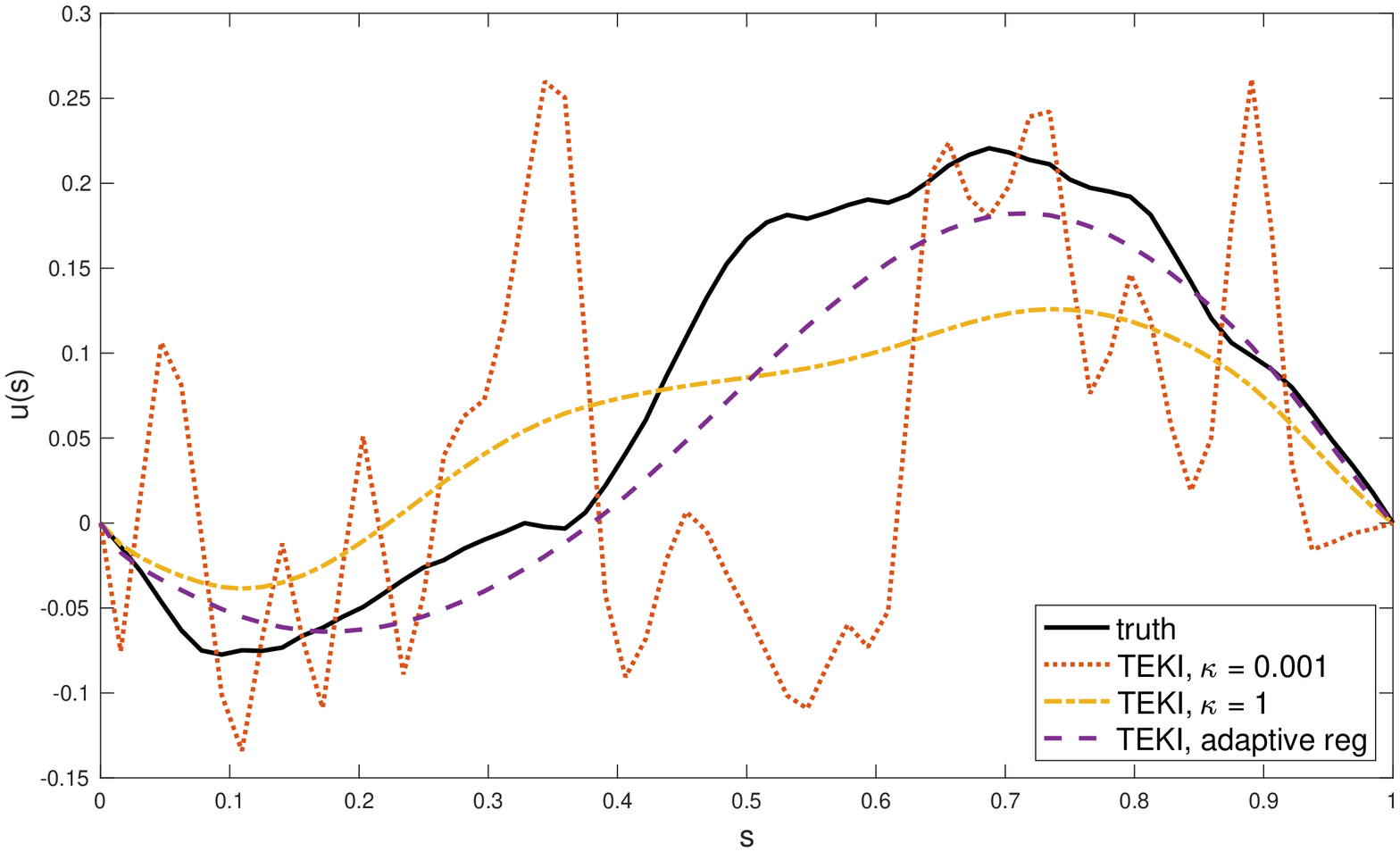}
\includegraphics[scale=0.23]{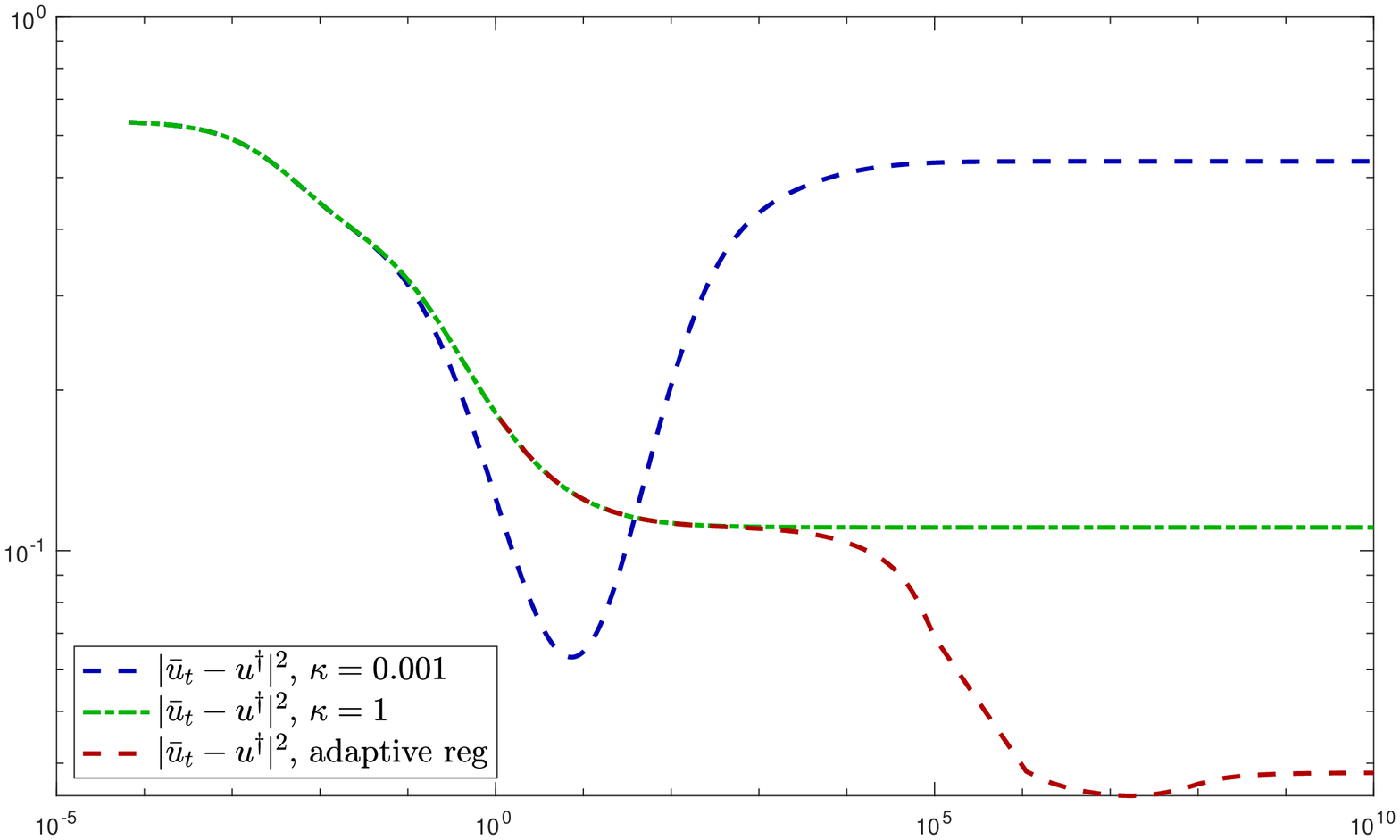}
\caption{{Resulting estimates of the unknown parameter $u^\dagger$ for different types of regularization of the TEKI flow (left) and corresponding residuals over time (right). }}
\label{fig:adaptive_reg}
\end{figure}

{
In Figure~\ref{fig:adaptive_reg} we plot the corresponding parameter estimation for all of the three schemes as well as the residuals $\|\bar u_t - u^\dagger\|^2$ for parameter reconstruction, where $\bar u_t$ denotes the particle mean of the applied (fixed/adaptive) TEKI flow. As expected, for too large choices of $\kappa = 1$ we observe over-smoothing in the parameter reconstruction, whereas for too low choice of $\kappa = 0.001$ we observe over-fitting issues. The adaptive regularization scheme helps to significantly improve the performance of the resulting reconstruction of the underlying ground truth $u^\dagger$.  Finally, we refer to \cite{weissmann2021adaptive} for more details on adaptive regularization schemes for TEKI.}

\section{Conclusion}\label{sec:conclusion}

In this work, we have quantified the EKI as derivative-free optimization method restricted to the finite dimensional subspace spanned through the initial ensemble {spread}.  Our results are based on the deterministic continuous-time formulation represented by an ODE and the incorporation of Tikhonov regularization.  The ensemble collapse has been quantified from above and below such that we were able to bound the approximation of gradients uniform in time and avoiding too fast degeneration of the preconditioner. We have further improved the convergence result through covariance inflation without breaking the subspace property. The considered covariance inflation can be viewed as generalization of the ESRF and gives an intuitive connection between the EKI and ESRF.  In our numerical experiment we have illustrated the application of EKI as optimization method restricted to the initial subspace and specified the role of choosing the initial ensemble.

The next step for future work includes the extension of the presented results to the stochastic formulation of EKI represented as stochastic differential equation. The key challenge in this scenario will be the quantification of the ensemble collapse, since the resulting lower and upper bounds on the eigenvalues will be path-depending.  In addition,  a detailed complexity analysis of EKI needs to be done in order to quantify the advantage of applying EKI as derivative-free scheme compared to other commonly used optimization methods including derivatives such as gradient descent or (Quasi-)Newton method.


\medskip

\noindent
{\bf Acknowledgement.} The author is very grateful for helpful discussions with Claudia Schillings and Jakob Zech which had an significant impact on the contribution of this work. Moreover, the author thanks Neil Chada and Claudia Schillings for carefully proofreading this manuscript.


\bibliographystyle{siam}

\bibliography{references}

\end{document}